\documentclass[11pt]{article}
\usepackage{amsthm, amsmath, amssymb, amsfonts, url, booktabs, tikz, setspace, fancyhdr, bm}
\usepackage{geometry}
\usepackage{hyperref, enumerate}
\usepackage[shortlabels]{enumitem}
\usepackage[babel]{microtype}
\usepackage[english]{babel}
\usepackage[capitalise]{cleveref}
\usepackage{comment}
\usepackage{bbm}
\usepackage{csquotes}
\usepackage{hyphenat}
\usepackage{mathabx}
\usepackage{tikz}
\usepackage{graphicx}
\usepackage{float}
\usepackage{xcolor}
\usepackage{mathtools}
\usetikzlibrary{positioning, arrows.meta, shapes.geometric}
\usepackage{amsmath}

\counterwithin{figure}{section}

\newtheorem{theorem}{Theorem}[section]

\newtheorem{lemma}[theorem]{Lemma}
\newtheorem{cor}[theorem]{Corollary}
\newtheorem{claim}[theorem]{Claim}

\usetikzlibrary{decorations.pathmorphing}

\theoremstyle{definition}
\newtheorem{defn}[theorem]{Definition}
\newtheorem*{defn-non}{Definition}

\newtheorem{rmk}[theorem]{Remark}

\newlist{Case}{enumerate}{2}
\setlist[Case, 1]{%
    label           =   {\bfseries Case \arabic*.},
    labelindent=1em ,labelwidth=1.3cm, labelsep*=1em, leftmargin =!
}
\setlist[Case, 2]{%
    label           =   {\bfseries Subcase \arabic{Casei}.\arabic*.},
    labelindent=-1em ,labelwidth=1.3cm, labelsep*=1em, leftmargin =!
}

\usepackage{todonotes}

\begin{document}
%%%%%%%%%%%%%%%%%%%%%%%%%%%%%%%%%%%%%%%%%%%%%%%%%%%%%%%
\title{\bf\Large  Hypergraph Tur\'an with bounded matching number}

\author{%
   Yue Xu\textsuperscript{\textdagger},%
   \quad Jiasheng Zeng\textsuperscript{\S},%
   \quad Xiao-Dong Zhang\textsuperscript{\P}%
}

\footnotetext[1]{\scriptsize
\noindent\textsuperscript{\textdagger}{School of Mathematical Sciences, Shanghai Jiao Tong University, Shanghai 200240, China. Email: xuyue1@sjtu.edu.cn}}
\footnotetext[2]{\scriptsize
\noindent\textsuperscript{\S}{Department of Mathematics, Hong Kong University of Science and Technology, Hong Kong 999077, China. Email: jasonzeng@sjtu.edu.cn}}
\footnotetext[3]{\scriptsize
\noindent\textsuperscript{\P}{School of Mathematical Sciences, Shanghai Jiao Tong University, Shanghai 200240, China. Email: xiaodong@sjtu.edu.cn}}

\date{\today}
%%%%%%%%
\maketitle
%%%%%%%%
\maketitle
%\footnote{footnote}
%%%%%%%%%%%%%%%%%%%%%%%%%%%%%%%%%%%%%%%%%%%%%%%%%
\begin{abstract}

For a fixed graph $G$, an $r$-uniform hypergraph is said to contain a Berge-$G$ if there exists a bijection $f\colon E(G)\to E(\mathcal{H})$ for some subhypergraph $\mathcal{H}$ such that $e\subseteq f(e)$ for every $e\in E(G)$. Motivated by Alon and Frankl's study of Tur\'an problems under bounded matching constraints, we investigate the maximum number of edges in $r$-uniform Berge-$K_3$-free hypergraphs with matching number at most~$s$.  %We focus on the case $G=K_3$.

We determine the exact Turán numbers for the cases $r=3$ and $r=4$. For $r=3$ and $n \geq 3 s$, we prove that every $n$-vertex Berge- $K_3$-free 3 -graph with matching number $s$ has at most $s(n-2 s)$ edges, and we characterize the unique extremal hypergraph attaining equality. For $r=4$ and $n \geq 4 s$, the maximum number of edges is $s\lfloor(n-2 s) / 2\rfloor$, except for the exceptional case $s=1$ and $n \equiv 1(\bmod 4)$, in which the bound is $(n-1) / 2$. As a corollary, our results recover the classical theorem of Győri on Berge-$K_3$-free hypergraphs. 
%For uniformities $r\ge 5$, we establish an upper bound of $O(sn)$ edges for any such hypergraph, and we complement this with a construction inspired by Erd\H{o}s, Frankl, and R\"{o}dl that attains $\Omega(sn^{1-o(1)})$ edges.

\end{abstract}
%%%%%%%%%%%%%%%%%%%%%%%%%%%%%%%%%%%%%%%%%%%%%%%%%

\section{introduction}

For a given integer $r \geq 2$, an $r$-uniform hypergraph (or $r$-graph for short) $\mathcal{H}$ consists of a vertex set $V(\mathcal{H})$ and an edge set $E(\mathcal{H}) \subseteq \binom{V(\mathcal{H})}{r}$. When $r=2$, we call it a graph. Let $\mathcal{F}$ be a family of $r$-graphs. We say that an $r$-graph $\mathcal{H}$ is $\mathcal{F}$-free if it does not contain any member of $\mathcal{F}$ as a subhypergraph. We denote by $\mathrm{ex}_r(n, \mathcal{F})$ the Tur\'an number of $\mathcal{F}$, that is, the maximum number of edges in an $n$-vertex $\mathcal{F}$-free $r$-graph. In particular, when $r=2$, we write $\mathrm{ex}_2(n,\mathcal{F})$ simply as $\mathrm{ex}(n,\mathcal{F})$. For a given graph $F$, determining $\mathrm{ex}(n,F)$, a problem initiated by Tur\'an~\cite{T41}, is one of the central topics in extremal combinatorics. For positive integers $\ell \geq 2$ and $s\geq 1$, we denote by $K_{\ell}$ the complete graph on $\ell$ vertices and by $M_s$ a matching of size $s$. Two classical results in this area are Tur\'an's theorem~\cite{T41}, which states that $$\mathrm{ex}(n, K_{\ell+1}) = \frac{1}{2}\left(1 - \frac{1}{\ell}\right)n^2,$$ and its far-reaching generalization, the Erd\H{o}s--Stone--Simonovits theorem~\cite{ErdosStone1946,ErdosSimonovits1966}, which asserts that for any graph $F$ with chromatic number $\chi(F)\geq 2$, $$\mathrm{ex}(n,F) = \left(1 - \frac{1}{\chi(F)-1} + o(1)\right)\frac{n^2}{2}.$$ We also mention the Erd\H{o}s--Gallai theorem~\cite{ErdosGallai59}, which determines the maximum number of edges in an $n$-vertex graph with no matching of size $s+1$. We refer the reader to the survey by F\"uredi and Simonovits~\cite{FurediSimonovits2013} for further results and background. Recently, Alon and Frankl~\cite{AlonFrankl24JCTB_K_M} considered the Tur\'an problem for $\mathcal{F} = \{K_{\ell+1}, M_{s+1}\}$ and obtained the following result. Let $T(n,\ell)$ be the $\ell$-partite graph on $n$ vertices with the maximum number of edges, and write $t(n,\ell)=|T(n,\ell)|$. Let $G(n,\ell,s)$ denote the complete $\ell$-partite graph on $n$ vertices in which one partite set has size $n-s$, and the remaining $\ell-1$ parts induce a copy of $T(s,\ell-1)$. We write $g(n,\ell,s)$ for the number of edges in $G(n,\ell,s)$.

\begin{theorem}[Alon and Frankl~\cite{AlonFrankl24JCTB_K_M}]\label{AlonFrankl}
For $n \ge 2s + 1$ and $\ell \ge 2$,
$$
\mathrm{ex}(n,\{K_{\ell+1}, M_{s+1}\}) = \max \{t(2s+1,\ell),\, g(n,\ell,s)\}.
$$
\end{theorem}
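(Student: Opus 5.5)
The lower bound comes from two constructions. $T(2s+1,\ell)$, padded with $n-2s-1$ isolated vertices, is $K_{\ell+1}$-free and has only $2s+1$ non-isolated vertices, so its matching number is at most $s$. $G(n,\ell,s)$ is $\ell$-partite, hence $K_{\ell+1}$-free, and every edge meets the $s$ vertices outside the large part, so $\nu\le s$. For the upper bound, take an $n$-vertex graph $G$ that is $K_{\ell+1}$-free with $\nu(G)\le s$. By the Gallai--Edmonds structure theorem (equivalently, the Tutte--Berge formula) there is a set $S\subseteq V(G)$ with $|S|=t$ such that $G-S$ has components $C_1,\dots,C_m$, each factor-critical (so of odd size $|C_i|=2c_i+1$), and
$$
s\ \ge\ \nu(G)=t+\sum_{i=1}^m c_i .
$$
Write $a=\sum_i c_i$, so $t+a\le s$.

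Next I bound the edges. Let $q$ be the number of singleton components; these form an independent set. Edges lie inside $S$, inside the $C_i$, or between $S$ and $V\setminus S$. Rather than counting piece by piece, I would replace $G$ by a denser $K_{\ell+1}$-free graph with the same structure. Take the vertex set $S\cup\bigcup_{c_i\ge 1}C_i$, of size $t+\sum_{c_i\ge1}(2c_i+1)$, and place a complete $\ell$-partite graph on it; make the singleton vertices adjacent to all of $S$. This keeps $K_{\ell+1}$-freeness only if $S$ is spread across just $\ell-1$ parts, which is why I do not count naively.

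The cleaner route is a convexity argument. Because $G$ is $K_{\ell+1}$-free, Turán's theorem gives
$$
e(G[C_i])\le t(2c_i+1,\ell).
$$
For the edges touching $S$, the neighbourhood structure is controlled by Zykov symmetrization. Among extremal graphs, choose one that is maximal with respect to the structure. Then I would argue that only one non-singleton component can be present, since merging two components into one larger odd component increases $t(\cdot,\ell)$ superadditively. The outcome is a two-parameter family: $S$ has size $t$, there is one big odd component of size $2a+1$, and there are $n-t-2a-1$ singletons. The edge count is at most
$$
f(t,a)=t(2a+1+t,\ \ell)\ \text{restricted appropriately}\;+\;(n-t-2a-1)\cdot(\text{max degree of a singleton into } S).
$$

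I expect this to be the main obstacle: a singleton is adjacent only to vertices in $S$, and for $K_{\ell+1}$-freeness together with the edges inside $S\cup C$ it is optimal that $G[S]$ be $K_\ell$-free. That costs edges inside $S$. I would handle it by taking $S$ as $T(t,\ell-1)$, joining every singleton to all of $S$, and making $C$ complete $\ell$-partite against $S$. In that configuration the whole graph is $\ell$-partite, with the singletons and one part of $C$ forming a part. The total is therefore bounded by the number of edges of an $\ell$-partite graph on $n$ vertices in which all but $t+2a+1$ vertices lie in a common part.

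Finally I verify that this quantity is convex in the split between $t$ and $a$ subject to $t+a\le s$. The extremes then give the maximum: $a=s,t=0$ yields $t(2s+1,\ell)$, and $t=s,a=0$ yields $g(n,\ell,s)$. Checking this convexity carefully, given the integer rounding in Turán numbers, is the delicate computation. I would compare consecutive values by moving one unit from $a$ to $t$ and show that the increments are monotone.
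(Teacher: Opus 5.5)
The paper gives no proof of this statement. It is quoted from Alon and Frankl and used only as a black box (with $\ell=2$) in the equality case of the $4$-uniform theorem, so your proposal has to stand on its own.

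Your lower-bound constructions are correct. The structural starting point is also fine, with one caveat: you want a maximal barrier rather than the Gallai--Edmonds set itself, since the latter can leave a perfectly matchable even part. This is standard.

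The genuine gap is the upper bound for the edges meeting $S$. You correctly isolate the difficulty: each singleton $w$ has $N(w)\subseteq S$ with $G[N(w)]$ $K_\ell$-free, whereas $G[S]$ itself need not be $K_\ell$-free. But you then resolve it by exhibiting the configuration you expect to be optimal ($G[S]\cong T(t,\ell-1)$, singletons joined to all of $S$) and treating its edge count as an upper bound. Nothing shows that a $K_{\ell+1}$-free graph with this barrier structure cannot do better by placing copies of $K_\ell$ inside $S$ (or $S\cup U$) and letting the singletons see only part of $S$. This trade-off is exactly what the theorem is about. The bound $t(t+2a+1,\ell)+(n-t-2a-1)t$, which ignores it, is valid but too weak: for $\ell=2$, $t=s$, $a=0$ it equals $\lfloor (s+1)^2/4\rfloor+(n-s-1)s$, which exceeds $s(n-s)$ once $s\ge3$.

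The tools you name do not close this gap.
\begin{itemize}
\item Zykov symmetrization preserves neither $\nu\le s$ nor the barrier. Cloning the centre of a star onto an isolated vertex turns $K_{1,3}\cup K_1$ into $K_{2,3}$, raising $\nu$ from $1$ to $2$.
\item Merging components via superadditivity of $t(\cdot,\ell)$ ignores the edges from the components to $S$, which are constrained in the same way.
\end{itemize}

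What you need is an actual inequality. The one legitimate symmetrization is among singletons. Replacing every singleton by a clone of a singleton of maximum degree keeps $G$ $K_{\ell+1}$-free and keeps $S$ a barrier with the same components. So you may assume all $q$ singletons share a neighbourhood $N\subseteq S$ with $|N|=d$ and $G[N]$ $K_\ell$-free, giving $e(G)=e(G[S\cup U])+qd$.

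When $t=s$ this finishes quickly. Then $U=\emptyset$ and $e(G[S])\le\binom s2-\binom d2+t(d,\ell-1)$. The resulting bound has increment $n-s-\lfloor d/(\ell-1)\rfloor>0$ in $d$, so it is at most its value $g(n,\ell,s)$ at $d=s$.

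For $0<t<s$ you need an analogous estimate for $e(G[S\cup U])$. It must use both the $K_\ell$-freeness of $G[N]$ and the absence of edges between distinct components, followed by the optimization over $t$ and $d$. That is the substance of the theorem, and it is absent from your proposal.

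Two smaller points:
\begin{itemize}
\item Your final quantity, read literally (an $\ell$-partite graph on $n$ vertices with all but $t+2a+1$ vertices in one part), does not evaluate to $t(2s+1,\ell)$ and $g(n,\ell,s)$ at the endpoints. It lets the singletons attach outside $S$.
\item Even the intended count is not convex in $t$. Once the odd component is too small to balance the parts, your configuration has $g(n,\ell,t)=t(n-t)+t(t,\ell-1)$ edges, whose increments $n-t-1-\lfloor t/(\ell-1)\rfloor$ decrease. So the endpoint conclusion needs a separate monotonicity argument in that range, not the convexity check you describe.
\end{itemize}
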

Building on this result, the Tur\'an problem with bounded matching number has been extensively studied by many researchers; we refer the reader to~\cite{G2024_Turan_bounded_matching,xue2024_Turan_bounded_matching,zhao_and_Lu2024_Turan_bounded_matching,zhu2025_Turan_bounded_matching} for a collection of related results and further developments.

We now turn to its hypergraph counterpart. For $r$-uniform hypergraphs, the Tur\'an problem is considerably more difficult, and exact results are known only for very few cases. For a comprehensive overview of results prior to 2011, we refer the reader to the survey of Keevash~\cite{keevash2011hypergraph}. Motivated by the work of Alon and Frankl in the graph setting, Tur\'an-type problems for hypergraphs with bounded matching number have also been extensively studied. Gerbner, Tompkins, and Zhou~\cite{GTZ2025} investigated the analogous problems for $r$-uniform hypergraphs and obtained exact results for certain families. Wang, Wang, and Yang~\cite{WWY2025} proved that every $F_5$-free $3$-graph $H$ with matching number at most $s$ has at most $s \left\lfloor \frac{(n-s)^2}{4} \right\rfloor$ edges for $n \ge 30(s+1)$ and $s \ge 3$, where the generalized triangle $F_5$ is the $3$-graph on vertex set $\{a,b,c,d,e\}$ with edge set $\{abc, abd, cde\}$. More recently, for the $3$-graph $F_{3,2}$ with edge set $\{123,145,245,345\}$, Chen, Liu, Qi, and Yang~\cite{CLQY25} determined the exact value of $\mathrm{ex}(n,\{F_{3,2}, M_{s+1}^3\})$ for every integer $s$ and all $n \ge 12s^2$. More recently, Yang, Zhang and Zeng obtained a hypergraph analogue of Alon-Frankl Theorem~\cite{YangZengZhang2025}.

Gerbner and Palmer~\cite{Gerbner_and_Palmer_Def_of_BG_first} generalized earlier definitions of Berge paths and Berge cycles, and introduced the following definition of a Berge-$G$. 
\begin{defn}
For a graph $G$, \emph{Berge-$G$} is a family of hypergraphs $\mathcal{H}$ for which there exists a bijection $f : E(G) \to E(\mathcal{H})$ such that $e \subseteq f(e)$ for every $e \in E(G)$.    
\end{defn}
 Given a graph $G$, we say that an $r$-graph $\mathcal{H}$ is $G$-free if it is Berge-$G$-free. Tur\'an problems on Berge hypergraphs have been extensively studied, yielding numerous extremal results. For $r \in \{3,4\}$, Gy\H{o}ri~\cite{gyHori2006triangle} proved that every $n$-vertex Berge-$K_3$-free $r$-graph contains at most $\frac{n^2}{8(r-2)}$ edges for sufficiently large $n$, and this bound is sharp. For larger uniformity, Gr\'osz, Methuku, and Tompkins~\cite{grosz2020uniformity_threshold} proved that for $r \geq 5$, one has $\mathrm{ex}_r(n, K_3) = o(n^2)$.  Moreover, Gy\H{o}ri and Lemons~\cite{gyHori2012Berge_cycle_given_length} showed that if an $r$-uniform hypergraph on $n$ vertices is Berge-$C_{2k}$-free and $r \geq 3$, then it has at most $O(n^{1+1/k})$ edges, matching the order of magnitude in the graph case (cf.\ the even cycle theorem of Bondy and Simonovits~\cite{bondyS_1974_even_cycles}). They further proved that the same upper bound $O(n^{1+1/k})$ also holds for Berge-$C_{2k+1}$-free $r$-uniform hypergraphs in~\cite{gyHori2012Berge_cycle_given_length}, which stands in sharp contrast to the graph setting. F\"uredi, Kostochka, and Luo~\cite{furedi2019_long_Berge_cycle} showed that for $n \geq k \geq r+3$, every $n$-vertex $r$-uniform hypergraph containing no Berge cycles of length at least $k$ has at most $\frac{n-1}{k-2}\binom{k-1}{r}$ edges, and this bound is tight whenever $k-2 \mid n-1$. Filling the gap of the above results, Ergemlidze, Gy\H{o}ri, Methuku, Salia, Tompkins, and Zamora~\cite{ergemlidze2020avoiding} determined the exact maximum number of edges for $n$-vertex $r$-uniform hypergraphs with no Berge cycles of length at least $k$ in the cases $k = r+2$ and $k = r+1$, and also characterized the corresponding extremal constructions. 
 Berge cycles are also closely related to bipartite Tur\'an problems. To see this, consider an $r$-uniform hypergraph $\mathcal{H}$ that is Berge-$C_k$-free, and construct its incidence bipartite graph $G=(X \cup Y, E)$ as follows: let $X = V(\mathcal{H})$ and $Y = E(\mathcal{H})$, and join $x \in X$ to $y \in Y$ if and only if $x \in y$ in $\mathcal{H}$. It is easy to verify that $G$ is $C_{2k}$-free and has $r|E(\mathcal{H})|$ edges. We refer the reader to~\cite{naor2005_bipartite_turan,gyHori2006triangle,verstraete2016extremalC2k_survey} for further details on this connection. Beyond Berge cycles, extremal problems for various other classes of Berge hypergraphs have also been widely studied. These include Berge-$K_4$-free~\cite{gyarfas2019Berge_K4,zhu2020_Berge_K4_turan} and triangle-free~\cite{frankl2024triangle} $3$-uniform hypergraphs, Berge-book-free uniform hypergraphs~\cite{gerbner2024Berge_book}, Berge forests~\cite{zhou2025Berge_forest}, Berge-path-free linear $3$-uniform hypergraphs~\cite{gyHori2025linear}, as well as disjoint unions of Berge paths~\cite{zhan2025tur}.

Motivated by Alon and Frankl~\cite{AlonFrankl24JCTB_K_M}, we investigate upper bounds on the number of edges in Berge-$K_3$-free hypergraphs with a fixed matching number. For uniformities $r=3$ and $r=4$, we establish sharp upper bounds for the size of $r$-uniform $\mathcal{B}(K_3)$-free hypergraphs with matching number at most $s$, for all $s$ and $n$, and we construct the corresponding extremal hypergraphs. As a corollary, our results imply the theorem of Gy\H{o}ri~\cite{gyHori2006triangle}. 
%For $r \geq 5$, we prove that any $r$-uniform $\mathcal{B}(K_3)$-free hypergraph with matching number at most $s$ has at most $O(sn)$ edges, and we give a construction of such hypergraphs with at least $\Omega(sn^{1-o(1)})$ edges. Our construction is inspired by Erd\H{o}s, Frankl and R\"{o}dl~\cite{erdos1986asymptotic}. We first introduce some basic definitions.
%\newpage

For $n, m \in \mathbb{N}$ with $m \leq n$, we denote $[n]=\{1,2, \ldots, n\}$ and $[m, n]= \{m, m+1, \ldots, n\}$.
\begin{defn}\label{extremal graph type}
For integers $r\ge 3$, $s\ge 1$ and $n\ge rs$, we define the $r$-uniform hypergraph $\mathcal{H}_r(s,n)$ as follows.

\begin{itemize}
\item \textbf{Case $r=3$:}
$$
V(\mathcal{H}_3(s,n)) = \{v_1,\dots,v_{2s},\, u_1,\dots,u_{n-2s}\},
$$
$$
E(\mathcal{H}_3(s,n)) = \{\,v_{2i-1}v_{2i}u_j \mid i\in[s],\; j\in[n-2s]\,\}.
$$

\item \textbf{Case $r\ge 4$ and $s=1$:}
Write $n = rk + l + 1$ with $k,l\in\mathbb{N}$ and $0\le l < r$. Then
$$
V(\mathcal{H}_r(1,n)) = \{a_1, b_1, b_2, \dots, b_{rk+l}\},
$$
and
$$
E(\mathcal{H}_r(1,n)) = 
\begin{cases}
\bigl\{ a_1 b_{ri+1}\cdots b_{ri+r-1},\; a_1 b_{ri+2}\cdots b_{r(i+1)} \mid i\in[0,k-1] \bigr\}, & \text{if } r \nmid n,\\[8pt]
\bigl\{ a_1 b_{ri+1}\cdots b_{ri+r-1},\; a_1 b_{ri+2}\cdots b_{r(i+1)} \mid i\in[0,k-1] \bigr\} \cup \bigl\{ a_1 b_{rk+1}\cdots b_{rk+r-1} \bigr\}, & \text{if } r \mid n.
\end{cases}
$$

\item \textbf{Case $r=4$ and $s\ge 2$:}
$$
V(\mathcal{H}_4(s,n)) = \{v_1,\dots,v_{2s},\, u_1,\dots,u_{n-2s}\},
$$
$$
E(\mathcal{H}_4(s,n)) = \left\{\, v_{2i-1}v_{2i}u_{2j-1}u_{2j} \;\middle|\; i\in[s],\; j\in\left[\left\lfloor\frac{n-2s}{2}\right\rfloor\right] \,\right\}.$$
\end{itemize}
\end{defn}

\begin{theorem}\label{main_big_theorem}
    Let $r\ge 3,s\ge 1$ be positive integers and $\mathcal{H}$ be an $r$-uniform Berge-triangle-free hypergraph of order $n$ with $\nu(\mathcal{H})=s$. If $r\in\{3,4\}$,
    $$
    \begin{aligned}
    e(\mathcal{H})\le e(\mathcal{H}_{r}(s,n)).
    \end{aligned}
    $$
    In particular, when $r=3$ or when $r=4$ with even order and matching number greater than $2$, the extremal graph $\mathcal{H}_{r}(s,n)$ is the unique extremal hypergraph attaining equality.
    % The above inequality is tight when $r=3,4$ or $s=1$, and when $r=3$, $\mathcal{H}_3(s,n)$ is its unique extremal graph. If $r\ge 5$ and $s\ge 2$, there exists an $r$-uniform hypergraph $\mathcal{G}$ of order $n$ with $\nu(\mathcal{H})\le s$ that
    % $$
    % \begin{aligned}
    % e(\mathcal{G})\ge
    %     \begin{cases}
    %       2\lfloor\frac{n-2}{r-2}\rfloor,& s=2\\
    %       \frac{sn}{3r^2}exp\{c_r \log r\sqrt {\log n}\}, &s\ge3
    %     \end{cases}
    % \end{aligned}
    % $$
    % where $c_r$ is a positive constant depending only on $r$.
\end{theorem}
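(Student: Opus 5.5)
We fix a maximum matching $M=\{e_1,\dots,e_s\}$ in $\mathcal{H}$ and write $U=V(\mathcal{H})\setminus V(M)$, so that $U$ contains no edge and every edge meets $V(M)$. The main tool is the standard structure of Berge-triangle-free hypergraphs. Two edges sharing a pair $xy$ force strong restrictions. Moreover, the $2$-shadow graph $\partial\mathcal{H}$ has the property that every triangle $xyz$ in it lies inside a single edge. This holds because otherwise one can pick three distinct edges containing $xy,yz,zx$; the case where these edges coincide is handled by a short case analysis. We also use the observation that the number of edges containing a given pair $xy$ is limited. If $xy\subseteq f,g$ with $f\neq g$, then no third vertex outside $f\cap g$ may be joined to both $x$ and $y$.

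For $r=3$, the plan is to assign each edge to a ``weight'' on the matching edges. For each $e_i=\{a_i,b_i,c_i\}$, consider the set $E_i$ of edges meeting $e_i$ and incident to $U$ (or to other matching edges), and show that $e_i$ together with the edges hanging off it contributes at most $n-2s$ in a suitable averaged sense. Concretely, we first prove a local lemma. If two edges $f,g$ each contain a distinct pair of $e_i$ and each meets $U$ in a vertex, then either they use the same pair of $e_i$, or the augmenting-path argument ($e_i$ replaced by two disjoint edges) contradicts maximality of $M$. The exception is when $f,g$ share their $U$-vertex, and then we get a Berge triangle on $e_i,f,g$. Hence all edges from $e_i$ to $U$ use one fixed pair of $e_i$, say $a_ib_i$, and the third vertex $c_i$ is ``dead'' (it has degree contribution only inside $V(M)$). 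Edges inside $V(M)$ are then counted via a pair-weighting: each edge gets charged to a vertex of $U\cup\{c_1,\dots,c_s\}$ it contains, and Berge-triangle-freeness implies that each such vertex $w$ receives at most one edge per index $i$. This gives $e(\mathcal{H})\le s(|U|+s)=s(n-2s)$. Equality forces every charge to be realized, which pins down $\mathcal{H}=\mathcal{H}_3(s,n)$.

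For $r=4$, we follow the same skeleton with matching edges $e_i$ of size $4$. Edges from $e_i$ to $U$ must again use a common pair $p_i\subseteq e_i$, up to the degenerate $s=1$ configurations. Berge-triangle-freeness then makes the $U$-parts of edges through $p_i$ nearly disjoint pairs: two such edges sharing a vertex $u\in U$ together with $p_i$ create a Berge triangle unless they are the same edge. This yields at most $\lfloor (n-2s)/2\rfloor$ edges per $i$ after accounting for the two unused vertices of each $e_i$ as additional ``outside'' vertices, giving the pool of $n-2s$ vertices. The case $s=1$ needs separate treatment. Here $U$-vertices may be paired with a single vertex $a_1$ of $e_1$ and edges of type $a_1\cup(\text{3-set})$, and a chain-type overlap $b_{4i+1}b_{4i+2}b_{4i+3}$ versus $b_{4i+2}b_{4i+3}b_{4i+4}$ is allowed, since those two edges share a pair but create no triangle. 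Counting the link of $a_1$ as a $3$-graph in which any two triples share $0$ or $2$ vertices, and each ``pair class'' has size at most $2$, gives the bound $2\lfloor (n-1)/4\rfloor$ plus the residue correction, which matches the stated value of $e(\mathcal{H}_4(1,n))$.

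The main obstacle is the exchange/local lemma controlling how edges attach to a matching edge, especially when edges meet two different matching edges and no $U$-vertex. For these we would try an augmenting argument along alternating structures: if edge $f$ meets $e_i$ and $e_j$, show that $e_i,e_j$ then have no edges to $U$ through the vertices used by $f$, so that $f$ can be charged to an otherwise unused vertex. Uniqueness for $r=4$ with even order and $s\ge 3$ (or large $s$) then follows by tracking equality in each charge.
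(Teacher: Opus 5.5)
The decisive gap is in the counting. For $r=4$ you only bound the edges that contain one of the distinguished pairs $p_i$. The claim that would make this enough, namely that every edge from $e_i$ to $U$ passes through a common pair, is false.

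Triangle-freeness and maximality of $M$ force a common pair only for edges meeting $e_i$ in exactly two vertices. Three kinds of edges escape this:
\begin{itemize}
\item edges meeting $e_i$ in a single vertex,
\item edges meeting $e_i$ in three vertices,
\item edges spread over several matching edges.
\end{itemize}
Such edges occur even at the extremal value. In the paper's odd-order family $\mathcal{G}(J,K)$, each edge $\{x,z\}\cup B_k$ ($k\in K$) meets the matching edge $\{x,y\}\cup B_j$ ($j\in J$) only in $x$, and contains neither $\{x,y\}$ nor any $A_i$. To bound these edges you need a trade-off showing they use up outside vertices at the same rate as pair-edges. That is exactly the ``main obstacle'' you leave open, so the $r=4$ bound is not proved.

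The paper gets this trade-off locally rather than by a global charging. The components of $L_{\mathcal{H}}(e)$ are at most one isolated vertex (coming from $X^3_{\mathcal{H}}(e)$), together with $2$-edges, copies of $S_p^{(3)}$ and copies of $T_{2,2}^{(3)}$. Each of the latter contributes at most one edge per two vertices. Hence $|\overline X_{\mathcal{H}}(e)|\le\lfloor (q(e)-2)/2\rfloor$ unless $q(e)\equiv 1\pmod 4$. Deleting $V(e)$ lowers $\nu$ by at least one and $n$ by four, and induction on $s$ gives $s\lfloor (n-2s)/2\rfloor$; the cases $s\le 2$ and $n\equiv1\pmod4$ are treated separately. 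Uniqueness comes from showing $X_{\mathcal{H}}(e)=X^2_{\mathcal{H}}(e)$ at equality, contracting the resulting pair classes to a triangle-free graph, and applying Alon--Frankl; ``tracking charges'' does not give it.

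Your $s=1$ count also rests on a false claim. Link triples of $a_1$ can share exactly one vertex: the book $\{a_1xy_iz_i\}_i$ is Berge-triangle-free with $\nu=1$. So $S_p^{(3)}$-type components must be allowed.

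For $r=3$, your target $s(|U|+s)$ matches the paper's split $s(n-3s)+s^2$. However, the charging is never defined, and as described it fails in three ways.
\begin{itemize}
\item \textbf{The common-pair lemma is right for the wrong reason.} Two distinct pairs of a triple always meet, so $f$ and $g$ are never disjoint and no augmentation is possible. The actual contradiction is the Berge triangle on the three vertices of $e_i$ formed by $f$, $g$ and $e_i$.
\item \textbf{Not every edge contains a vertex of $U\cup\{c_1,\dots,c_s\}$.} Take $e_1=a_1b_1c_1$, $e_2=a_2b_2c_2$ and outside vertices $u,w$. Then $\{e_1,e_2,a_1b_1u,a_2b_2w,a_1b_1a_2\}$ is Berge-triangle-free with $\nu=2$ and $p_i=a_ib_i$, yet $a_1b_1a_2$ avoids $U\cup\{c_1,c_2\}$ for every choice of maximum matching. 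Nor is $c_i$ ``dead'': replacing $a_1b_1a_2$ by $c_1u'a_2$, with a new vertex $u'$, gives another valid example in which $c_1$ meets $U$.
\item \textbf{``At most one edge per $(w,i)$'' fails for an unspecified choice of $w$.} A single $u\in U$ can lie in many edges $a_iux_1,a_iux_2,\dots$ meeting $e_i$. The charged vertex must therefore be chosen with care, for example of degree at most one in $L_{\mathcal{H}}(e_i)$. You must also say which index is used when an edge meets several matching edges.
\end{itemize}
The rule ``degree at most one in $L_{\mathcal{H}}(e_i)$'' is exactly the paper's auxiliary bipartite graph between $U$ and $M$. Proving that every edge outside $M$ produces such a pair needs the case analysis you postpone, including edges meeting two matching edges. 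The edges inside $V(M)$ are then bounded by $s^2$ through a separate degree count over each matching edge.
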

This paper is organized as follows. Section 2 presents the necessary notations, definitions, and lemmas. Section 3 provides the proof of Theorem~\ref{main_big_theorem}. 

\section{Preliminary}
In this section, we mainly introduce some notation and some lemmas related to Berge-triangle-free graphs. For pairwise disjoint sets, we denote their disjoint union by $\bigsqcup$ (or $\coprod$ ); in particular, for two sets $A$ and $B$ with $A \cap B=\varnothing$ we write $A \sqcup B$. 
Let $\mathcal{H}=(V(\mathcal{H}), E(\mathcal{H}))$ be a hypergraph. Denote $v(\mathcal{H})=|V(\mathcal{H})|$ and $e(\mathcal{H})=|E(\mathcal{H})|$.
For a vertex $v \in V(\mathcal{H})$, its degree $d_{\mathcal{H}}(v)$ is the number of edges containing $v$ and its neighborhood $N_{\mathcal{H}}(v)$ is the set of vertices that share an edge with $v$. %i.e., $N_{\mathcal{H}}(v) = \bigcup_{e \in E(\mathcal{H}), v \in e} (e \setminus {v})$.
We omit $\mathcal{H}$ if it is clear from the context.
The subhypergraph induced by $V$ is denoted $\mathcal{H}[V]$. 
Let $r \geq 3$, and let $\mathcal{H}$ be a Berge-$K_3$-free $r$-graph. For an edge $e \in E(\mathcal{H})$, we define 
$$X_{\mathcal{H}}(e)=\{ f \in E(\mathcal{H}) \setminus \{e\}: f \cap e \neq \emptyset \} \text{ and }\overline{X}_{\mathcal{H}}(e)=X_{\mathcal{H}}(e)\cup \{e\}.$$ 
For $i=1,2$, define 
$$X_{\mathcal{H}}^{i}(e)=\{ f \in X_{\mathcal{H}}(e) : |f \cap e| = i \},$$ 
and for $i=3$, define $$ X_{\mathcal{H}}^{3}(e)=\{ f \in X_{\mathcal{H}}(e) : |f \cap e| \geq 3 \}.$$
Consequently, $X_{\mathcal{H}}(e)=X_{\mathcal{H}}^1(e)\sqcup X_{\mathcal{H}}^2(e)\sqcup X_{\mathcal{H}}^3(e)$, and $X_\mathcal{H}^3(e)=\emptyset$ when $r=3$. Then we have the following results.

\begin{claim}\label{1st small claim}
    Let $r$ and $\mathcal{H}$ be defined as above. Let $e \in E(\mathcal{H})$, and let $i,j \in [3]$ with $(i,j)\neq (1,1)$. If $f \in X_{\mathcal{H}}^{i}(e)$ and $g \in X_{\mathcal{H}}^{j}(e)$, then $f \cap g \setminus e = \emptyset. $
\end{claim}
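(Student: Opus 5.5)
Suppose for contradiction that there is a vertex $w\in (f\cap g)\setminus e$. By symmetry we may assume $i\ge 2$, i.e.\ $|f\cap e|\ge 2$; here we use $(i,j)\neq(1,1)$. Note also that $f\neq g$ is implicit. If $f=g$ the claim is trivially false, since $f\setminus e\neq\emptyset$ for an edge $f\ne e$ of the same size. So the statement should be read for distinct $f,g$, and we assume this throughout. We will build a Berge triangle from the three distinct edges $e,f,g$, contradicting that $\mathcal{H}$ is Berge-$K_3$-free.

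The triangle will have core vertices $x,y,w$. Here $w$ is the common vertex of $f$ and $g$ outside $e$. The vertex $y$ is chosen in $g\cap e$, and $x$ is chosen in $f\cap e$ with $x\neq y$. The assignment of triangle edges to hyperedges is $xy\mapsto e$, $xw\mapsto f$ and $yw\mapsto g$.

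We check the containments. First, $\{x,y\}\subseteq e$ because both are chosen in $e$. Second, $\{x,w\}\subseteq f$ since $x\in f$ and $w\in f$. Third, $\{y,w\}\subseteq g$ since $y\in g$ and $w\in g$. The three vertices are distinct: $w\notin e$ while $x,y\in e$, and $x\ne y$ by choice. The map is a bijection onto three distinct edges, so this gives a Berge-$K_3$.

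The only step needing care is the choice of $x$ and $y$, and this is exactly where $(i,j)\neq(1,1)$ is used. Pick any $y\in g\cap e$, which is possible since $j\ge1$. Because $|f\cap e|\ge 2$, the set $(f\cap e)\setminus\{y\}$ is nonempty, so $x$ can be taken from it. In the excluded case $i=j=1$ with $f\cap e=g\cap e=\{v\}$, no such choice exists, and indeed the configuration need not contain a triangle. No earlier results are needed; the argument is immediate once this choice is made.
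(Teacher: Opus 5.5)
Your proof is correct and follows the paper's argument: take $w\in (f\cap g)\setminus e$, use $(i,j)\neq(1,1)$ to choose distinct $x\in f\cap e$ and $y\in g\cap e$, and obtain the Berge triangle $xy\mapsto e$, $xw\mapsto f$, $yw\mapsto g$. You also make explicit two points the paper leaves implicit: how to guarantee $x\neq y$, and that $f\neq g$ must be assumed, since otherwise $f\cap g\setminus e=f\setminus e\neq\emptyset$.
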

\begin{proof}
    Otherwise, we suppose that $x\in f\cap g\setminus e$. Since $(i,j)\neq (1,1)$, we can select $\{x_1,x_2\}\subseteq e$ such that $x_1\in f\cap e$ and $x_2\in g\cap e$. Thus $\{e,f,g\}$ forms a Berge triangle with $xx_1x_2$, a contradiction.
\end{proof}
\begin{claim}\label{2nd small claim}
    Let $r$ and $\mathcal{H}$ be defined as above. Let $e\in E(\mathcal{H})$ and $f,g\in X_\mathcal{H}^1(e)$ such that $f\cap g\setminus e\neq \emptyset$. Then $f\cap e=g\cap e$. Moreover, if $h\in X_{\mathcal{H}}(e)$ and $h\cap f\setminus e\neq \emptyset$, then $f\cap h=f\cap g=f\cap g\cap h$.
\end{claim}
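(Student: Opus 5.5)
The plan is to argue by contradiction in both parts, in the same style as Claim~\ref{1st small claim}: whenever the conclusion fails, I will name three vertices and three distinct edges of $\mathcal{H}$ that together form a Berge triangle.

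\textbf{First part.} Let $f,g\in X^1_{\mathcal{H}}(e)$, and write $f\cap e=\{x_1\}$ and $g\cap e=\{x_2\}$. Pick $x\in f\cap g\setminus e$. Suppose $x_1\neq x_2$. Then the three distinct edges $f,g,e$ form a Berge triangle on $\{x,x_1,x_2\}$:
\[
xx_1\subseteq f,\qquad xx_2\subseteq g,\qquad x_1x_2\subseteq e .
\]
This is a contradiction, so $f\cap e=g\cap e$.

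\textbf{Second part: $h$ lies in $X^1_{\mathcal{H}}(e)$.} Let $h\in X_{\mathcal{H}}(e)$ and pick $y\in h\cap f\setminus e$. By Claim~\ref{1st small claim} applied to $f\in X^1_{\mathcal{H}}(e)$, the edge $h$ cannot belong to $X^2_{\mathcal{H}}(e)\cup X^3_{\mathcal{H}}(e)$, since then $f\cap h\setminus e$ would be empty. Hence $h\in X^1_{\mathcal{H}}(e)$. Applying the first part to the pair $f,h$ gives $h\cap e=f\cap e=\{x_1\}$.

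\textbf{Second part: the intersections coincide.} It remains to show that $f\cap h=f\cap g$ and that both equal $f\cap g\cap h$. Note that $f\cap g\ni x_1,x$ and $f\cap h\ni x_1,y$.

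First, suppose some $z\in (f\cap g)\setminus h$, where we may have $z=x_1$ or not. The case $z=x_1$ is impossible, since $x_1\in h$. So $z\notin e$, and $z\neq y$ because $y\in h$. Consider the vertices $z,y,x_2$ with $x_2=x_1\in e$. Instead, I would use the triangle on $\{z,y,w\}$ with edges $g\ni z$, $h\ni y$, and $f\ni z,y$. For this I need a common vertex of $g$ and $h$ other than one lying in $f$; the only guaranteed common vertex is $x_1$, which lies in $f$. So I will use the triangle on $\{x_1,z,y\}$ with edges
\[
x_1z\subseteq g,\qquad zy\subseteq f,\qquad yx_1\subseteq h .
\]
These three edges are distinct, because $f\neq g$ and $f\neq h$, and $g\neq h$ since $z\in g\setminus h$. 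Therefore they give a Berge triangle, a contradiction.

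This shows $f\cap g\subseteq h$. The symmetric argument, exchanging the roles of $g$ and $h$, gives $f\cap h\subseteq g$. Together these yield $f\cap g=f\cap h=f\cap g\cap h$. In the degenerate case $g=h$ the statement is trivial.

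\textbf{Main obstacle.} The only delicate point is the distinctness of the three edges used in each triangle. In particular, when $g=h$ the triangle argument does not apply, and that case is exactly the trivial one.
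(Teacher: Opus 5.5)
Your proof is correct and follows essentially the same approach as the paper. The first part uses the Berge triangle $\{e,f,g\}$ on $x x_1 x_2$. Claim~\ref{1st small claim} then forces $h\in X^1_{\mathcal{H}}(e)$ with $h\cap e=f\cap e$, and each inclusion between $f\cap g$ and $f\cap h$ comes from a Berge triangle on $\{f,g,h\}$ through the common vertex $x_1\in e$; the paper proves $f\cap h\subseteq f\cap g$ first and you prove the reverse first, which is immaterial.

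Delete the abandoned detour about $\{z,y,w\}$. Also say explicitly that $f\neq g$ and $h\neq f$ are implicit hypotheses of the claim rather than facts you derive, since the conclusion fails for $h=f\neq g$.
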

\begin{proof}
    Let $x\in f\cap g \setminus e$. We suppose by contradiction that $f\cap e = \{x_1\}$, $g\cap e=\{x_2\}$ and $y\neq z$. Then $\{e,f,g\}$ forms a Berge triangle with $xx_1x_2$, a contradiction. Thus we assume that $f\cap e=g\cap e =\{y\}$. Let $h\in X_{\mathcal{H}}(e)$ and $h\cap f \setminus e\neq \emptyset$. By Claim~\ref{1st small claim}, $h\in X_{\mathcal{H}}^1(e)$ and by the discussion above, we have $h\cap e=f\cap e=g\cap e$. We claim that $f\cap h\subseteq f\cap g$. Otherwise, if $\exists z\in (f\cap h)\setminus (f\cap g)$, then $\{g,h,f\}$ forms a Berge triangle with $xyz$, a contradiction. Symmetrically, $f\cap g\subseteq f\cap h$ and therefore, $f\cap h=f\cap g=f\cap g\cap h$. 
\end{proof}
\begin{claim}\label{3rd small claim}
     Let $r$ and $\mathcal{H}$ be defined as above. Let $e\in E(\mathcal{H})$ and $f\neq g\in X_{\mathcal{H}}(e)$. If $|f\cap g|\geq 3$, then $f,g\in X_{\mathcal{H}}^1(e)$ and for any $h\in X_{\mathcal{H}}(e)\setminus \{f,g\}$, $f\cap h\subseteq e$.
     %这里最后直接写是f\cap h是空集或者是f\cap g会不会更直接一些。
     %%%%是不是区别不大 都行吧
\end{claim}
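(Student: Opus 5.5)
Let $f\neq g\in X_{\mathcal{H}}(e)$ with $|f\cap g|\ge 3$. The plan has two parts: first show that $f$ and $g$ meet $e$ in exactly one vertex, then show that any third edge $h\in X_{\mathcal{H}}(e)$ can meet $f$ only inside $e$. Both parts exhibit a Berge triangle on $\{e,f,g\}$ or on $\{e,f,h\}$, possibly with $g$ in place of $f$.

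\textbf{Step 1: $f,g\in X_{\mathcal{H}}^1(e)$.} We first show that $f\cap g\not\subseteq e$. If $f\cap g\subseteq e$, then $|f\cap e|\ge 3$ and $|g\cap e|\ge 3$. Pick three distinct vertices $a,b,c$ with $a\in f\cap g\subseteq e$, $b\in f\cap e$ and $c\in g\cap e$; such a choice exists because each of these intersections has at least three elements. The edges $f,g,e$ then carry the pairs $ab$, $ac$ and $bc$ respectively, which is a Berge triangle. Hence some $x\in f\cap g\setminus e$ exists. By Claim~\ref{1st small claim}, the pair $(i,j)$ with $f\in X^i_{\mathcal{H}}(e)$ and $g\in X^j_{\mathcal{H}}(e)$ must be $(1,1)$. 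By Claim~\ref{2nd small claim}, $f\cap e=g\cap e=\{y\}$ for a single vertex $y$.

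\textbf{Step 2: $f\cap h\subseteq e$.} Now let $h\in X_{\mathcal{H}}(e)\setminus\{f,g\}$, and suppose $z\in f\cap h\setminus e$. By Claim~\ref{2nd small claim}, applied to $f$, $g$ and $h$, we get $f\cap h=f\cap g$; in particular $|f\cap h|\ge 3$, and by the first part of that claim $h\cap e=\{y\}$. We exploit the fact that $f$ and $g$ share at least two vertices outside $e$. Since $|f\cap g|\ge 3$ and only $y$ lies in $e$, pick distinct $x_1,x_2\in f\cap g\setminus e$; both lie in $h$ because $f\cap h=f\cap g$. The edges $f,g,h$ then carry the pairs $yx_1$, $x_1x_2$ and $x_2y$ respectively, so $\{f,g,h\}$ forms a Berge triangle on $y x_1 x_2$, a contradiction. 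Hence $f\cap h\setminus e=\emptyset$, that is, $f\cap h\subseteq e$.

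\textbf{Main obstacle.} The delicate point is the distinctness bookkeeping in Step 1's case $f\cap g\subseteq e$, where one needs three distinct vertices carried by three distinct edges. That case is settled cleanly by the size bounds above. Step 2 also uses two elements of $f\cap g\setminus e$, which requires $|f\cap g|\ge 3$ together with $|f\cap g\cap e|\le 1$; the latter holds by Step 1.
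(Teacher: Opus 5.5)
Your proof is correct and follows the paper's argument. Claim~\ref{1st small claim} forces $f,g\in X^1_{\mathcal{H}}(e)$, Claim~\ref{2nd small claim} then gives $f\cap h=f\cap g$, and three vertices of $f\cap g\cap h$ form a Berge triangle on $\{f,g,h\}$; the one addition is that you justify $f\cap g\not\subseteq e$ via the Berge triangle on $\{e,f,g\}$, a step the paper asserts without proof.
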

\begin{proof}
    Since $|f\cap g|\geq 3$, we have $f\cap g\setminus e\neq \emptyset$. By Claim~\ref{1st small claim}, $f,g\in X_{\mathcal{H}}^1(e)$. Now we suppose that $h\in X_{\mathcal{H}}(e)$ and $h\cap f\setminus e\neq \emptyset$. By Claim~\ref{2nd small claim}, $f\cap g=f\cap h=f\cap g\cap h$ and thus, $|f\cap g\cap h|\geq 3$. Let $\{x,y,z\}\subseteq f\cap g\cap h$. Then then $\{f,g,h\}$ forms a Berge triangle with $xyz$, a contradiction. Therefore, $f\cap h\subseteq e$.
\end{proof}

Given an $r$-uniform graph $\mathcal{H}$, the link of $v$ in $\mathcal{H}$ is 
$$
L_\mathcal{H}(v)=\left\{A\in \binom{V(\mathcal{H})}{r-1}:A\cup \{v\}\in \mathcal{H}\right\}.
$$
% This leads naturally to the following lemma.
% \begin{lemma}\label{triangle_free_vertex_link}%这个后面没有用到
%     Given a Berge-triangle-free $r$-graph $\mathcal{H}$, the link of $v$ in $\mathcal{H}$ is Berge $P_3$-free and Berge $K_3$-free.
% \end{lemma}

Furthermore, we define the link hypergraph of $e$, denoted $L_{\mathcal{H}}(e)$ on the vertex set $\bigcup\limits_{v\in e}V(L_\mathcal{H}(v))\setminus V(e)$. Its edge set is defined as

$$
E(L_{\mathcal{H}}(e))=\bigcup_{\emptyset \subsetneq B \subsetneq e,|B|\le |e|-2} \operatorname{Lk}_{\mathcal{H}}(B, e) \quad \text { where } \operatorname{Lk}_{\mathcal{H}}(B,e)=\{A \subseteq V(\mathcal{H}) \backslash e: A \sqcup B \in E(\mathcal{H})\}.
$$
Note that $\bigcup\limits_{|B|=|e|-1}\operatorname{Lk}_{\mathcal{H}}(B,e)$ are the isolated vertices in hypergraph $L_\mathcal{H}(e)$. Moreover, we also define
$$\mathcal{V}_1^\mathcal{H}(e)=\{B\subsetneq e:|B|=1 \text{ and } \operatorname{Lk}_{\mathcal{H}}(B,e)\neq \emptyset\},$$
$$\mathcal{B}_2^\mathcal{H}(e)=\{B\subsetneq e:|B|=2 \text{ and } \operatorname{Lk}_{\mathcal{H}}(B,e)\neq \emptyset\},$$
$$\mathcal{B}_3^\mathcal{H}(e)=\{B\subsetneq e:|B|\ge 3 \text{ and } \operatorname{Lk}_{\mathcal{H}}(B,e)\neq \emptyset\}.$$
We omit $\mathcal{H}$ if it is clear from the context. Given an $r$-uniform hypergraph $\mathcal{H}$, we denote its matching number by $\nu(\mathcal{H})$. Then the following claim holds:

\begin{claim}\label{3_properties_for_B}
    Let $\mathcal{H}$ be a Berge triangle free graph and $\mathcal{B}_1(e), \mathcal{B}_2(e)$, and $\mathcal{B}_3(e)$ be defined as above. Then the following hold:
    
1. For $B, B^{\prime} \in \mathcal{B}_2(e) \cup \mathcal{B}_3(e)$, $B \cap B^{\prime}=\emptyset$. Hence, $\sum_{B \in \mathcal{B}_2(e) \cup \mathcal{B}_3(e)} \left|B\right| \le |e|$.

2. For any edge set $\mathcal{B}\subseteq \mathcal{B}_2(e) \cup \mathcal{B}_3(e)$, $|\mathcal{B}|+|\mathcal{V}_1(e)\backslash\left(\bigcup_{B\in\mathcal{B}}V(B))\right)|\le \nu(\mathcal{H}).$

3. $|\mathcal{V}_1(e)|\le \nu(\mathcal{H})$; $|\mathcal{B}_2(e) \cup \mathcal{B}_3(e)|+|\mathcal{V}_1(e)\backslash\left(\bigcup_{B\in\mathcal{B}_2(e)\cup \mathcal{B}_3(e)}V(B))\right)| \le \nu(\mathcal{H})$.
\end{claim}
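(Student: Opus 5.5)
Throughout, $\mathcal{B}_1(e)$ in the statement is read as $\mathcal{V}_1(e)$. The plan is to turn each element of $\mathcal{V}_1(e)$, $\mathcal{B}_2(e)$, $\mathcal{B}_3(e)$ into an edge of $\mathcal{H}$ meeting $e$ in exactly that set, and then to use the lemmas of Section~2 on $X_{\mathcal{H}}(e)$. Concretely, a set $B\subsetneq e$ with $\operatorname{Lk}(B,e)\neq\emptyset$ gives an edge $f_B=A\sqcup B$ with $A\subseteq V(\mathcal{H})\setminus e$, so $f_B\cap e=B$ and $f_B\in X^{|B|}(e)$ (where $|B|\ge 3$ lands in $X^3$).

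\textbf{Part 1.} Take distinct $B,B'\in\mathcal{B}_2(e)\cup\mathcal{B}_3(e)$ and suppose $x\in B\cap B'$. The edges $f_B$ and $f_{B'}$ are distinct, because their traces on $e$ differ. Assume first $B\not\subseteq B'$, and pick $y\in B\setminus B'$. Then take $a\in A'=f_{B'}\setminus e$, which is nonempty since $B'\neq e$, and $x'\in B'$ with $x'\neq x$ (possible since $|B'|\ge2$). I would check whether $e,f_B,f_{B'}$ form a Berge triangle on $x,y,a$. This needs $xy\subseteq e$, $ya\subseteq f_B$, and $xa\subseteq f_{B'}$, which fails because $y\in f_B$ but $a\notin f_B$ in general. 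So instead I would use a different triangle: $y\in B\setminus B'$, $z\in B'\setminus B$, and the common vertex $x$, with $yz\subseteq e$, $xy\subseteq f_B$, $xz\subseteq f_{B'}$. This is a Berge triangle as long as $B\setminus B'$ and $B'\setminus B$ are both nonempty.

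The remaining case is nested sets, $B\subsetneq B'$. Here $|B'|\ge3$. I would take $x\in B$, $z\in B'\setminus B$, and $a\in f_B\setminus e$. Then $az\subseteq$? This fails. Instead I would use $x,x'\in B$ (since $|B|\ge2$) together with $a\in f_B\setminus e$. The pairs are $xx'\subseteq f_{B'}$, $x'a\subseteq f_B$, and $xa\subseteq$? This also fails. So I would take the triangle $x,x',z$ with $z\in B'\setminus B$: $xx'\subseteq f_B$, $x'z\subseteq f_{B'}$, and $xz\subseteq e$. The three edges $f_B$, $f_{B'}$, $e$ are distinct, so this is a Berge triangle. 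Hence the sets in $\mathcal{B}_2(e)\cup\mathcal{B}_3(e)$ are pairwise disjoint, and the sum bound $\sum|B|\le|e|$ follows at once.

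\textbf{Part 2.} This is the main step. Given $\mathcal{B}$, I would choose $f_B$ for each $B\in\mathcal{B}$, and for each $\{v\}\in\mathcal{V}_1(e)$ with $v$ outside $\bigcup\mathcal{B}$ choose an edge $f_v\in X^1(e)$ with $f_v\cap e=\{v\}$. The aim is to show that all these edges are pairwise disjoint; they then form a matching, which proves the bound. Their traces on $e$ are pairwise disjoint by Part 1 and by the choice of the vertices $v$, so only intersections outside $e$ need checking.
\begin{itemize}
\item If at least one of the two edges lies in $X^2\cup X^3$, then Claim~\ref{1st small claim} gives $f\cap g\setminus e=\emptyset$.
\item If both are of the form $f_v,f_w$ with $v\neq w$, then Claim~\ref{2nd small claim} says an intersection outside $e$ forces $f_v\cap e=f_w\cap e$, which is impossible.
\end{itemize}
So the family is a matching of size $|\mathcal{B}|+|\mathcal{V}_1(e)\setminus\bigcup\mathcal{B}|$, and this is at most $\nu(\mathcal{H})$. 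The obstacle I expect is making sure the chosen edges really are distinct and really avoid $e$ outside their traces; both follow from the definition of $\operatorname{Lk}(B,e)$.

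\textbf{Part 3.} Both inequalities follow from Part 2. Taking $\mathcal{B}=\emptyset$ gives $|\mathcal{V}_1(e)|\le\nu(\mathcal{H})$, and taking $\mathcal{B}=\mathcal{B}_2(e)\cup\mathcal{B}_3(e)$ gives the second inequality.
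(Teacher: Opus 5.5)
Your proof is correct and follows the paper's own route. Part 1 exhibits a Berge triangle on $e,f_B,f_{B'}$; Part 2 uses Claims~\ref{1st small claim} and~\ref{2nd small claim} to show the chosen edges are pairwise disjoint, hence form a matching; Part 3 takes the two special cases of Part 2. Your explicit split in Part 1 into the crossing case and the nested case $B\subsetneq B'$ fills in a detail that the paper's one-line argument leaves implicit, but delete the abandoned attempts (``This fails'') from the final write-up.
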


\begin{proof}
For the first statement, we suppose $f\cap e=B $ and $g\cap e=B'$. That $fge$ is not a Berge triangle implies $B \cap B^{\prime}=\emptyset$.

For the second statement, Claim \ref{1st small claim} and \ref{2nd small claim} ensures that there are $(|\mathcal{B}|+|\mathcal{V}_1(e)\backslash\left(\bigcup_{B\in\mathcal{B}}V(B))\right)|)$ edges, all of which are pairwise vertex-disjoint.

The third statement consists of two special cases of the second one: the first inequality corresponds to taking $\mathcal{B}=\emptyset$, and the second corresponds to taking $\mathcal{B}=\mathcal{B}_2(e)\cup \mathcal{B}_3(e)$.
\end{proof}

Claim~\ref{1st small claim} implies $\operatorname{Lk}_\mathcal{H}(B,e)$ is either an edge or a vertex if $|B|\ge 2$. Next, we give the following partitions of $X_{\mathcal{H}}^1(e)$ %according to the claims we obtained. 
to further characterize the structure of $L_\mathcal{H}(e)$. Let $r$ and $\mathcal{H}$ be defined as above and let $e\in E(\mathcal{H})$. Define $$X_{\mathcal{H}}^{1,1}(e)=\{f\in X_{\mathcal{H}}^1(e):\forall g\in X_{\mathcal{H}}^1(e), |f\cap g|\leq 1\},$$
$$X_{\mathcal{H}}^{1,2}(e)=\{f\in X_{\mathcal{H}}^1(e):\exists g\in X_{\mathcal{H}}^1(e), |f\cap g|=2\},$$and $$X_{\mathcal{H}}^{1,3}(e)=\{f\in X_{\mathcal{H}}^1(e):\exists g\in X_{\mathcal{H}}^1(e), |f\cap g|\geq 3\}.$$
It is easy to see that the following claim holds.
\begin{claim}\label{4th small claim}
    Let $r$ and $\mathcal{H}$ be defined as above and let $e\in E(\mathcal{H})$. Then $X_{\mathcal{H}}^{1,i}(e)\cap X_{\mathcal{H}}^{1,j}(e)=\emptyset$ for any $i\neq j\in [3]$ and $X_{\mathcal{H}}^1(e)=X_{\mathcal{H}}^{1,1}(e)\sqcup X_{\mathcal{H}}^{1,2}(e)\sqcup X_{\mathcal{H}}^{1,3}(e)$.
\end{claim}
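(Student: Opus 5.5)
The claim splits into a disjointness part and a covering part, and both follow from the definitions, using Claim~\ref{3rd small claim} for one interaction. For $f\in X_{\mathcal{H}}^1(e)$, let
$$m(f)=\max\{|f\cap g| : g\in X_{\mathcal{H}}^1(e)\setminus\{f\}\},$$
with the convention $m(f)=0$ if there is no such $g$. Here $g$ must range over edges different from $f$; otherwise $|f\cap f|=r\ge 3$ would put every edge into $X_{\mathcal{H}}^{1,3}(e)$. I read the definitions of $X_{\mathcal{H}}^{1,i}(e)$ with this implicit requirement $g\neq f$.

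With that reading, each set is determined by $m(f)$:
\begin{itemize}
\item $f\in X_{\mathcal{H}}^{1,1}(e)$ if and only if $m(f)\le 1$;
\item $f\in X_{\mathcal{H}}^{1,3}(e)$ if and only if $m(f)\ge 3$;
\item $f\in X_{\mathcal{H}}^{1,2}(e)$ if and only if some $g\neq f$ has $|f\cap g|=2$.
\end{itemize}

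\textbf{Covering.} Every $f\in X_{\mathcal{H}}^1(e)$ has $m(f)\le 1$, $m(f)=2$, or $m(f)\ge 3$. In the middle case some $g$ attains $|f\cap g|=2$, so $f\in X_{\mathcal{H}}^{1,2}(e)$. Hence $X_{\mathcal{H}}^1(e)$ is the union of the three sets.

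\textbf{Disjointness.}
\begin{itemize}
\item $X_{\mathcal{H}}^{1,1}(e)$ is disjoint from $X_{\mathcal{H}}^{1,2}(e)$ and from $X_{\mathcal{H}}^{1,3}(e)$ immediately: membership in the first forces $m(f)\le 1$, while membership in either of the others requires some $g$ with $|f\cap g|\ge 2$.
\item The only nontrivial case is $X_{\mathcal{H}}^{1,2}(e)\cap X_{\mathcal{H}}^{1,3}(e)=\emptyset$. Suppose $f$ has partners $g,g'\in X_{\mathcal{H}}^1(e)$ with $|f\cap g|\ge 3$ and $|f\cap g'|=2$; then $g'\neq g$. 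Since $g'\in X_{\mathcal{H}}(e)\setminus\{f,g\}$, Claim~\ref{3rd small claim} gives $f\cap g'\subseteq e$. But $f\in X_{\mathcal{H}}^1(e)$ means $|f\cap e|=1$, so $|f\cap g'|\le 1$, a contradiction.
\end{itemize}

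Combining covering and disjointness gives the partition $X_{\mathcal{H}}^1(e)=X_{\mathcal{H}}^{1,1}(e)\sqcup X_{\mathcal{H}}^{1,2}(e)\sqcup X_{\mathcal{H}}^{1,3}(e)$. The only real content is the $X_{\mathcal{H}}^{1,2}$ versus $X_{\mathcal{H}}^{1,3}$ case, which reduces to Claim~\ref{3rd small claim}. Everything else is bookkeeping once the convention $g\neq f$ is made explicit.
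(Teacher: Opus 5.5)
Your argument is correct. It is essentially the justification the paper leaves implicit, since the paper only says ``it is easy to see'': the covering is a trichotomy on the largest intersection size, and the one nontrivial disjointness, $X_{\mathcal{H}}^{1,2}(e)\cap X_{\mathcal{H}}^{1,3}(e)=\emptyset$, follows from Claim~\ref{3rd small claim}, just as the paper uses that claim in the $\sim_3$ bullet of Claim~\ref{equi relation claim}. Your explicit convention $g\neq f$ is also the right reading, since if $g=f$ were allowed every edge of $X_{\mathcal{H}}^1(e)$ would lie in $X_{\mathcal{H}}^{1,3}(e)$.
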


\begin{claim}\label{equi relation claim}
    Let $r$ and $\mathcal{H}$ be defined as above, and let $e \in E(\mathcal{H})$. \begin{itemize}
        \item For any $f,g \in X_{\mathcal{H}}^{1,1}(e)$, we define a relation $f \sim_1 g$ if and only if $f = g$ or $|f \cap g| = 1$. By Claim~\ref{2nd small claim}, if $f\neq g$, it follows that $f \cap e = g \cap e$. Moreover, if there exists $h \in X_{\mathcal{H}}^{1,1}(e)$ such that $h \sim_1 f$, then we also have $f \cap e = h \cap e = g \cap e$. Consequently, the relation $\sim_1$ is an equivalence relation on $X_{\mathcal{H}}^{1,1}(e)$.
        \item For any $f,g\in X_{\mathcal{H}}^{1,2}(e)$, we define a relation $f\sim_2 g$ if and only if $f=g$ or $|f\cap g|=2$. Suppose that $h\in X_{\mathcal{H}}^{1,2}(e)$ and $h\sim_2 g$; without loss of generality, assume that $h\neq f$ and $h\neq g$. Then $|h\cap g|=2$, and by Claim~\ref{2nd small claim}, we have $h\cap g=f\cap g=f\cap g\cap h$, which implies $|h\cap f|=2$. Therefore, $\sim_2$ is an equivalence relation on $X_{\mathcal{H}}^{1,2}(e)$.
        \item For any $f,g\in X_{\mathcal{H}}^{1,3}(e)$, we define a relation $f\sim_3 g$ if and only if $f=g$ or $|f\cap g|\geq 3$. Suppose that $h\in X_{\mathcal{H}}^{1,3}(e)$ satisfies $h\sim_3 g$ and $f\neq g$. We claim that $h=f$ or $h=g$; otherwise, we would have $|h\cap g|\geq 3$, and by Claim~\ref{3rd small claim} we obtain $h\cap g\subset e$, which contradicts the fact that $|g\cap e|=1$. Hence, $\sim_3$ is an equivalence relation on $X_{\mathcal{H}}^{1,3}(e)$, and each equivalence class contains exactly two elements.
        \item For any $f,g\in X_{\mathcal{H}}^{2}(e)$, we define a relation $f\sim_4 g$ if and only if $f=g$ or $|f\cap g|=2$. Let $h\in X_{\mathcal{H}}^{2}(e)$ with $h\sim_4 g$; without loss of generality, assume $h\neq f$, $h\neq g$, and $f\neq g$. Then $|h\cap g|=2$, and by Claim~\ref{1st small claim} we have $h\cap g\subseteq e$ and $f\cap g\subseteq e$. Since $|g\cap e|=2$, it follows that $h\cap g=f\cap g$, and hence $h\cap f=h\cap g$ and $|h\cap f|=2$. Therefore, $\sim_4$ is an equivalence relation on $X_{\mathcal{H}}^{2}(e)$.
    \end{itemize} 
\end{claim}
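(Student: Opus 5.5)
The plan is to check reflexivity, symmetry and transitivity for each of the four relations. Reflexivity holds by definition, and symmetry holds because each condition on $|f\cap g|$ is symmetric in $f$ and $g$. So the work is transitivity. Throughout, fix $e$, take $f,g,h$ in the relevant class with $f\sim g$ and $g\sim h$, and assume $f,g,h$ are pairwise distinct, since otherwise there is nothing to prove. The goal is to show $f\sim h$.

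For $\sim_1$, I would first show that for distinct $f,g\in X^{1,1}_{\mathcal H}(e)$ we have $|f\cap g|=1$ if and only if $f\cap e=g\cap e$.
\begin{itemize}
\item Suppose $|f\cap g|=1$ and the common vertex lies outside $e$. Then Claim~\ref{2nd small claim} forces $f\cap e=g\cap e=\{y\}$. Now $y$ is a second common vertex, so $|f\cap g|\ge 2$, a contradiction. Hence $f\cap g=f\cap e=g\cap e$.
\item Conversely, if $f\cap e=g\cap e=\{y\}$, then $y\in f\cap g$. Since $f\in X^{1,1}_{\mathcal H}(e)$, we have $|f\cap g|\le 1$, so $|f\cap g|=1$.
\end{itemize}
Thus $\sim_1$ on $X^{1,1}_{\mathcal H}(e)$ is the relation of having the same trace on $e$, which is plainly transitive.

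For $\sim_2$, note that $|f\cap e|=1$. Hence a $2$-set $f\cap g$ cannot lie inside $e$, so $f\cap g\setminus e\neq\emptyset$, and likewise $g\cap h\setminus e\neq\emptyset$. Apply Claim~\ref{2nd small claim} with $g$ in the role of the central edge, and with $f$ and $h$ as the two edges meeting it outside $e$. This gives $g\cap f=g\cap h=f\cap g\cap h$, so $f\cap h$ contains this $2$-set. If $|f\cap h|\ge 3$, then $f$ would lie in $X^{1,3}_{\mathcal H}(e)$, contradicting the disjointness in Claim~\ref{4th small claim}. Hence $|f\cap h|=2$.

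For $\sim_3$, suppose $|f\cap g|\ge 3$ and $|g\cap h|\ge 3$. Apply Claim~\ref{3rd small claim} to the pair $\{f,g\}$ with third edge $h$, reading the claim with $g$ in the role of the distinguished edge. This yields $g\cap h\subseteq e$. Then $|g\cap h|\le |g\cap e|=1$, which contradicts $|g\cap h|\ge 3$. So no such distinct triple exists, and every class has exactly two elements. Transitivity then holds vacuously.

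For $\sim_4$, Claim~\ref{1st small claim} with $(i,j)=(2,2)$ gives $f\cap g\subseteq e$ and $g\cap h\subseteq e$. Both sets are $2$-subsets of the $2$-set $g\cap e$, so $f\cap g=g\cap h=g\cap e$. Hence $g\cap e\subseteq f\cap h$. By Claim~\ref{1st small claim} again $f\cap h\subseteq e$, so $f\cap h\subseteq f\cap e$, which has size $2$. Therefore $f\cap h=g\cap e$ and $|f\cap h|=2$.

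The only delicate point is in $\sim_1$ and $\sim_2$: one must locate the intersection inside or outside $e$ before Claim~\ref{2nd small claim} can be applied with the right role assignment.
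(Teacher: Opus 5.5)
Your proposal is correct and follows the same route as the paper: reflexivity and symmetry are immediate, and transitivity comes from Claims~\ref{1st small claim}, \ref{2nd small claim} and \ref{3rd small claim} used exactly as in the embedded argument. You also fill in steps the paper skips. For $\sim_1$ you prove the converse direction, that equal traces on $e$ force $|f\cap g|=1$. For $\sim_2$ you check $f\cap g\setminus e\neq\emptyset$ before invoking Claim~\ref{2nd small claim}, and you exclude $|f\cap h|\ge 3$ via Claim~\ref{4th small claim}; the paper's ``which implies $|h\cap f|=2$'' only yields $|h\cap f|\ge 2$.
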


We denote $S_p^{(r-1)}$ the hypergraph consisting of $p$ $(r-1)$-edges sharing a common vertex where $p$ is a positive integer and $T_{2,q}^{(r-1)}$ the hypergraph consisting of two distinct $(r-1)$-edges whose intersection contains $q$ vertices where $q\ge 2$ is a positive integer. Now we give the specific structure of $L_{\mathcal{H}}(e)$:

\begin{lemma}\label{triangle_free_edge_link}
   Let $\mathcal{H}$ be a Berge‑triangle‑free $r$-graph and $e \in E(\mathcal{H})$.
Every connected component of $L_{\mathcal{H}}(e)$ %is either
can only be 
\begin{itemize}
\item an isolated vertex,

\item a hyperedge of size at most $r-2$,

\item a copy of $S_p^{(r-1)}$, 

\item a copy of $T_{2,q}^{(r-1)}$.
\end{itemize}
%the sum of the number of isolated vertices and the number of hyperedges of size at most $r-3$ does not exceed $|\mathcal{B}_3(e)|$.
If $r\ge 4$, then at most $|\mathcal{B}_3(e)|$ components are either isolated vertices or components of size at most $r-3$.
Here, a connected component of the hypergraph is defined as a maximal set of vertices that are pairwise connected.

\end{lemma}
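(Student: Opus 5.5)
The plan is to classify every edge of $L_{\mathcal{H}}(e)$ by its origin $f\in X_{\mathcal{H}}(e)$ and the size of $B=f\cap e$, and then to use Claims~\ref{1st small claim}--\ref{3rd small claim} to see how such edges can meet outside $e$. By definition, an edge $f\in X_{\mathcal{H}}(e)$ contributes the set $A=f\setminus e$ to $L_{\mathcal{H}}(e)$. The size of $A$ is $r-|B|$. If $|B|=r-1$, then $A$ is a single vertex, which is the isolated-vertex case. If $2\le |B|\le r-2$, then $A$ is an edge of size $r-|B|\le r-2$. If $|B|=1$, then $A$ is an $(r-1)$-edge. Each edge of $L_{\mathcal{H}}(e)$ arises from some such $f$, so every component is a union of sets $f\setminus e$ that are linked through common vertices outside $e$.

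First, consider a component that contains a set $A=f\setminus e$ with $f\in X^2_{\mathcal{H}}(e)\cup X^3_{\mathcal{H}}(e)$. By Claim~\ref{1st small claim}, no other $g\in X_{\mathcal{H}}(e)$ satisfies $(f\cap g)\setminus e\neq\emptyset$. Hence $A$ meets no other edge of the link, and its component is just $A$: either a single edge of size at most $r-2$, or an isolated vertex when $|A|=1$.

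Next, consider a component built only from $(r-1)$-edges coming from $X^1_{\mathcal{H}}(e)$. I would split according to the partition of Claim~\ref{4th small claim}.
\begin{itemize}
\item If the component contains $f\in X^{1,3}_{\mathcal{H}}(e)$, Claim~\ref{3rd small claim} and the third item of Claim~\ref{equi relation claim} show that $f$ has exactly one partner $g$ with $|f\cap g|\ge 3$. Every other $h\in X_{\mathcal{H}}(e)$ has $f\cap h\subseteq e$, and the same holds with $g$ in place of $f$. So the component consists of exactly $f\setminus e$ and $g\setminus e$, which share $|f\cap g|-1\ge 2$ vertices. This is a copy of $T^{(r-1)}_{2,q}$.
\item If the component contains $f\in X^{1,2}_{\mathcal{H}}(e)$, Claim~\ref{2nd small claim} shows that every $h\in X_{\mathcal{H}}(e)$ meeting $f$ outside $e$ satisfies $f\cap h=f\cap g$. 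This intersection consists of the common vertex $y\in e$ together with one fixed vertex $x\notin e$. Applying the same claim to $h$ shows that the whole $\sim_2$-class shares the single outside vertex $x$, pairwise intersections outside $e$ are exactly $\{x\}$, and nothing else attaches. This is a copy of $S^{(r-1)}_p$.
\item If the component contains $f\in X^{1,1}_{\mathcal{H}}(e)$, the sets $f\setminus e$ are pairwise disjoint outside $e$, so $f\setminus e$ is a component by itself. This is $S^{(r-1)}_1$.
\end{itemize}
One point needs checking: can a component mix classes, say $X^{1,2}$ with $X^{1,1}$? Claim~\ref{2nd small claim} rules this out, since any $h$ meeting $f$ outside $e$ intersects $f$ in exactly $f\cap g$, so $h$ is in the same class as $f$.

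Finally, for the counting statement with $r\ge 4$: the components that are isolated vertices or edges of size at most $r-3$ come precisely from edges $f$ with $|f\cap e|\ge 3$, that is, from $X^3_{\mathcal{H}}(e)$. By Claim~\ref{1st small claim}, two distinct edges $f,g\in X^3_{\mathcal{H}}(e)$ give disjoint sets $B=f\cap e$ and $B'=g\cap e$; otherwise $e,f,g$ form a Berge triangle, as in Claim~\ref{3_properties_for_B}(1). In particular $B\neq B'$, so for each $B\in\mathcal{B}_3(e)$ the set $\operatorname{Lk}(B,e)$ contains exactly one element. This gives an injection from these components into $\mathcal{B}_3(e)$.

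I expect the main obstacle to be the $X^{1,2}$ analysis, which must show that the star structure is exact. Concretely, two members $h,h'$ of a $\sim_2$-class meeting $f$ at $\{y,x\}$ cannot share a further vertex outside $e$, or else $f,h,h'$ would form a Berge triangle.
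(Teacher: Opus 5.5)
Your proposal is correct and follows the paper's route. You sort link edges by $|f\cap e|$, use Claim~\ref{1st small claim} to isolate the sets coming from $X^2_{\mathcal H}(e)\cup X^3_{\mathcal H}(e)$, and use Claims~\ref{2nd small claim} and~\ref{3rd small claim} on $X^{1,1},X^{1,2},X^{1,3}$ to get single $(r-1)$-edges, stars and copies of $T^{(r-1)}_{2,q}$, before counting the $X^3$ contributions against $\mathcal B_3(e)$. The one loose spot, which the paper shares, is the step $|\operatorname{Lk}_{\mathcal H}(B,e)|=1$ for $|B|\ge 3$: neither Claim~\ref{1st small claim} nor Claim~\ref{3_properties_for_B}(1), which concerns distinct sets $B\ne B'$, covers two edges with $f\cap e=g\cap e=B$, but that case is immediate because any three vertices of $B$ lie in each of $e,f,g$ and so span a Berge triangle.
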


\begin{proof}
    Claims \ref{1st small claim} and \ref{2nd small claim} ensure that for $B\ne B'\subseteq e $ and $V(\operatorname{Lk}_\mathcal{H}(B,e))\cap V(\operatorname{Lk}_\mathcal{H}(B',e))=\emptyset$. Hence, we establish the lemma by examining the three cases according to the size of $B$: $|B|\ge 3, |B|=2$ and $|B|=1$.

    First, if $|B|\ge 3$, $\operatorname{Lk}_{\mathcal{H}}(B, e)$ is a vertex or an edge of size at most $r-3$. $\bigcup_{B \subsetneq e,|B|\ge 3}  \operatorname{Lk}_{\mathcal{H}}(B, e)$ contributes all hyperedges of size at most $r-3$ and isolated vertices. The sum of the number of these edges and vertices is not more than $|\mathcal{B}_3|$.

     Next, $\bigcup_{B \subsetneq e,|B|=2}  \operatorname{Lk}_{\mathcal{H}}(B, e)$ contributes all hyperedges of size $r-2$. Claim \ref{3rd small claim} implies all of them are pairwise vertex-disjoint.

     For $v\in V_1^\mathcal{H}(e)$, $\operatorname{Lk}_\mathcal{H}(\{v\})$ is an edge set of $r$-graph. Let $\sim_1$, $\sim_2$ and $\sim_3$ be the relation defined in Claim~\ref{equi relation claim}. We can choose a set of representative hyperedges $\{e_1^1,e_2^1,\ldots\}$, $\{e_2^1,\ldots\}$, $\{e_3^1,\ldots\}$ from the the equivalence classes of $\sim_1$, $\sim_2$ and $\sim_3$ within $X_{\mathcal{H}}^{1,1}(e), X_{\mathcal{H}}^{1,2}(e)$ and $X_{\mathcal{H}}^{1,3}(e)$, respectively. Let $C(h)$ be the equivalence classes of representative edges $h=e_i^j$, then for $h\ne h'$, $V(C(h))\cap V(C(h'))=\{v\}$ which implies $Lk_{C(h)}(\{v\},e)\cap Lk_{C(h')}(\{v\},e)=\emptyset$. Clearly, $Lk_{C(e_1^j)}(\{v\},e)$ is a set of some isolated $(r-1)$-edges; $Lk_{C(e_2^j)}(\{v\},e)$ is an $S_p^{(r-1)}$; $Lk_{C(e_3^j)}(\{v\},e)$ is a $T_{2,q}^{(r-1)}$. 
\end{proof}

This naturally yields an upper bound for $|X_{\mathcal{H}}(e)|$.

\begin{cor}\label{X_H_e_upper_bound}
    Let $r\ge 5$ and $\mathcal{H}$ be a Berge‑triangle‑free $r$-graph $\mathcal{H}$. If let $V(L_{\mathcal{H} }(e))=n_0$ with $e\in E(\mathcal{H})$, then $|X_{\mathcal{H}}(e)|\le\frac{2n_0}{r}$
\end{cor}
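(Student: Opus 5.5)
The plan is a discharging argument on the components of $L_{\mathcal{H}}(e)$ as classified in Lemma~\ref{triangle_free_edge_link}. Every $f\in X_{\mathcal{H}}(e)$ has $f\setminus e\in \operatorname{Lk}_{\mathcal{H}}(f\cap e,e)$, so $f\setminus e$ is a nonempty set of $r-|f\cap e|$ vertices of $L_{\mathcal{H}}(e)$. Distinct $f$ give distinct pairs $(f\cap e,f\setminus e)$. Hence it suffices to show
\[
\#\{\text{edges of } X_{\mathcal{H}}(e) \text{ meeting } C\}\le \tfrac{2}{r}|V(C)|
\]
for every component $C$, and then sum over components.

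\textbf{Components with enough vertices.} I would check each type of Lemma~\ref{triangle_free_edge_link} in turn, using $r\ge 5$ (in fact $r\ge 4$ suffices here).
\begin{itemize}
\item A copy of $S_p^{(r-1)}$ carries $p$ edges on $p(r-2)+1$ vertices, and $p\le \tfrac{2}{r}(p(r-2)+1)$ because $r\le 2r-4$.
\item A copy of $T_{2,q}^{(r-1)}$ has $q\le r-2$ (its two $(r-1)$-sets are distinct), so it carries $2$ edges on $2(r-1)-q\ge r$ vertices.
\item An isolated $(r-1)$-edge has ratio $1/(r-1)$, and an $(r-2)$-edge coming from $\mathcal{B}_2(e)$ has ratio $1/(r-2)\le 2/r$.
\end{itemize}
The equivalence-class description in the proof of Lemma~\ref{triangle_free_edge_link} guarantees that each such component is hit only by the edges of a single class. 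So these components carry at most $\tfrac{2}{r}|V(C)|$ edges.

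\textbf{The obstacle: components from $\mathcal{B}_3(e)$.} These are isolated vertices and edges of size $k=r-|B|\le r-3$, each coming from a single edge $f$ with $|f\cap e|=|B|\ge 3$. Such a component has a surplus of $1-\tfrac{2k}{r}=\tfrac{2|B|-r}{r}$, which is positive exactly when $|B|>r/2$. By Claim~\ref{3_properties_for_B}(1) we have $\sum_{B\in\mathcal{B}_2\cup\mathcal{B}_3}|B|\le r$, so at most one such $B$ exists. The plan is to charge its surplus to the slack of the other components: an $S_p$ has slack $\tfrac{2}{r}(p(r-2)+1)-p$, and an isolated $(r-1)$-edge has slack $\tfrac{r-2}{r}$. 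Any $f$ with $f\cap e$ a single vertex $v$ lies in the part of $e$ not covered by the large $B$, and by Claims~\ref{1st small claim}--\ref{2nd small claim} it produces such a component.

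I expect this compensation step to fail, and in fact the statement as worded cannot be proved this way. Take $\mathcal{H}=\{e,f\}$ with $|e\cap f|=r-1$. This hypergraph is Berge-triangle-free, and $X_{\mathcal{H}}(e)=\{f\}$ with $n_0=1$, while $2n_0/r<1$. More generally, when the unique large $B$ has $|B|=r-1$ and there are no $\mathcal{V}_1$-edges, nothing remains to absorb the surplus. So the argument yields
\[
|X_{\mathcal{H}}(e)|\le \tfrac{2n_0}{r}+\sum_{B\in\mathcal{B}_3(e)}\max\Bigl\{0,\tfrac{2|B|-r}{r}\Bigr\}\le \tfrac{2n_0}{r}+\tfrac{r-2}{r},
\]
and the clean bound $|X_{\mathcal{H}}(e)|\le 2n_0/r$ follows only under the additional hypothesis $\mathcal{B}_3(e)\cap\{B:|B|>r/2\}=\emptyset$. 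This hypothesis holds, for instance, whenever no edge meets $e$ in more than $r/2$ vertices. I would prove this corrected statement and flag the counterexample rather than claim the bound as stated.
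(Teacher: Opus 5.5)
Your counterexample is correct. With $V(L_{\mathcal{H}}(e))$ as defined in Section~2, which removes $V(e)$, the hypergraph $\{e,f\}$ with $|e\cap f|=r-1$ has $n_0=1$ and $|X_{\mathcal{H}}(e)|=1>2/r$. The failure is not confined to small $n_0$. Let $v$ be the vertex of $e\setminus f$ and attach at $v$ any number $k$ of pairs $g,g'$ with $g\cap e=g'\cap e=\{v\}$, $|g\cap g'|=r-1$, and all other vertices new. Each pair is a $T_{2,r-2}^{(r-1)}$-component with zero slack, and the result is still Berge-triangle-free. It gives $|X_{\mathcal{H}}(e)|=2k+1>2k+\frac{2}{r}=\frac{2n_0}{r}$. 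This example also shows your bound $\frac{2n_0}{r}+\frac{r-2}{r}$ is attained.

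The paper's argument is the same component-by-component ratio count as yours. However, it writes the number of link vertices outside the $\mathcal{B}_3$-components as $n_0-r-\sum_{B\in\mathcal{B}_3}(|e|-|B|)$. So it silently takes $n_0=|V(L_{\mathcal{H}}(e))|+r=q(e)$, the number of vertices covered by $\overline X_{\mathcal{H}}(e)$. With that normalization, the $r$ extra vertices of $e$ pay for the $\mathcal{B}_3$ surplus. Using $\sum_{B\in\mathcal{B}_3}|B|\le r$, the paper obtains $|X_{\mathcal{H}}(e)|\le \frac{2n_0}{r}-|\mathcal{B}_3|$.

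Your exact accounting yields the same thing before you take positive parts: $\frac{2n_0}{r}+\sum_{B\in\mathcal{B}_3}\frac{2|B|-r}{r}\le\frac{2(n_0+r)}{r}-|\mathcal{B}_3|$. So the natural repair is not an extra hypothesis on $\mathcal{B}_3(e)$. It is to replace $n_0$ by $n_0+r$, i.e.\ by $q(e)$. The statement is then true as the paper intends, and your bound $\frac{2n_0}{r}+\frac{r-2}{r}$ is strictly stronger than it. Incidentally, you rightly treat every $(r-2)$-set from $\mathcal{B}_2(e)$ as its own component. The paper's remark that $\operatorname{Lk}_{\mathcal{H}}(B,e)$ is a single set for $|B|\ge 2$ is only true for $|B|\ge 3$, though this does not affect either bound.

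Two small inaccuracies in your write-up, neither of which affects the final inequality:
\begin{itemize}
\item An edge meeting $e$ in a single vertex need not lie in $e\setminus B$ for the large $B$. For example, $e=[r]$, $f=[r-1]\cup\{w\}$, $g=\{1,y_1,\ldots,y_{r-1}\}$ is Berge-triangle-free, and $g$ attaches at $1\in B$.
\item The condition that no $B\in\mathcal{B}_3(e)$ has $|B|>r/2$ is sufficient, not necessary, for the clean bound. Slack from isolated $(r-1)$-edges (each has slack exactly $\frac{r-2}{r}$) or from copies of $S_p^{(r-1)}$ can absorb the surplus.
\end{itemize}
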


Note that by Claim~\ref{3_properties_for_B} $\sum_{B \in \mathcal{B}_3(e)} \left|B\right| \le |e|$, and every connected component of $L_{\mathcal{H}}(e)\backslash \{\bigcup_{B\in \mathcal{B}_3}\operatorname{Lk}_{\mathcal{H}}(B, e)\}$ is a hyperedge of size $r-2$, a copy of $S_p^{(r-1)}$, or a copy of $T_{2,q}^{(r-1)}$ by Lemma~\ref{triangle_free_edge_link}. Therefore, 
$$
\begin{aligned}
|X_{\mathcal{H}}(e)|&=\sum_{B\in \mathcal{B}_3}|\operatorname{Lk}_{\mathcal{H}}(B, e)|+\sum_{B\in \mathcal{B}_1\cup\mathcal{B}_2}|\operatorname{Lk}_{\mathcal{H}}(B, e)|\\
&=|\mathcal{B}_3|+\max\left(\frac{2}{r},\frac{1}{r-2}\right)\left(n_0-r-\sum_{B\in \mathcal{B}_3}(|e|-|B|)\right)\\
&\le|\mathcal{B}_3|+\frac{2}{r}\left(n_0-r-|\mathcal{B}_3|r+r\right)\\
&=\frac{2n_0}{r}-|\mathcal{B}_3|\leq \frac{2n_0}{r}.
\end{aligned}
$$

\section[r-uniform Berge-K3-free graph]{$r$-uniform Berge-$K_3$-free graph}

\subsection{\texorpdfstring{$3$}{3}-uniform Berge-\texorpdfstring{$K_3$}{K3}}

\begin{theorem}\label{main for 3-uniform}
    Let $s\ge 1,n\ge 3s$ and $\mathcal{H}$ be a Berge-triangle-free $3$-graph with $n$ vertices and with maximum matching number $\nu(\mathcal{H})=s$.
     $$|E(\mathcal{H})|\le s(n-2s),$$
     and $\mathcal{H}_3(s,n)$ is the unique extremal graph. %with the most edges.
\end{theorem}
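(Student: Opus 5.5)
The plan is to argue by induction on $s$ and remove one edge of a maximum matching together with its three vertices. The bound is consistent with this step. If $e\in E(\mathcal{H})$ and $\mathcal{H}'=\mathcal{H}-V(e)$ satisfies $\nu(\mathcal{H}')\le s-1$, then $\mathcal{H}'$ has $n-3\ge 3(s-1)$ vertices. Induction (together with monotonicity of $s(n-2s)$ in $s$ on the relevant range) gives $e(\mathcal{H}')\le (s-1)(n-2s-1)$. Since the deleted edges are exactly $\overline{X}_{\mathcal{H}}(e)$, it suffices to show
\[
|\overline{X}_{\mathcal{H}}(e)|\le s(n-2s)-(s-1)(n-2s-1)=n-s-1 .
\]
In $\mathcal{H}_3(s,n)$ this is tight for $e=v_1v_2u_1$: it meets the $n-2s$ edges through $v_1v_2$ and the $s-1$ edges $v_{2i-1}v_{2i}u_1$.

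\textbf{Base case and the local bound.} For $s=1$, $\mathcal{H}$ is intersecting, so $\overline{X}(e)=E(\mathcal{H})$ for every edge $e$, and the goal is $e(\mathcal{H})\le n-2$. I will handle this together with the general local bound, using Lemma~\ref{triangle_free_edge_link} for $r=3$. Components of $T_{2,q}^{(2)}$ type cannot occur, so every component of $L_{\mathcal{H}}(e)$ is one of the following:
\begin{itemize}
\item an isolated vertex, coming from $B\in\mathcal{B}_2(e)$, each contributing one edge, with at most one vertex of $e$ outside the pairs in $\mathcal{B}_2(e)$ by Claim~\ref{3_properties_for_B}(1);
\item a star $S_p^{(2)}$ attached to some $v\in\mathcal{V}_1(e)$, contributing $p$ edges on $p+1$ vertices;
\item a disjoint $2$-edge, contributing one edge on two vertices.
\end{itemize}
Hence each $\mathcal{B}_2$-component yields one edge per outside vertex, while every component from $\mathcal{V}_1$ yields at most one fewer edge than its number of vertices. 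Let $n_0=|V(L(e))|\le n-3$ and let $c$ be the number of $\mathcal{V}_1$-components. This gives $|X(e)|\le n_0-c$. To reach $n-s-1$ I need to show that either $c$ is large or the components outside $L(e)$ absorb enough vertices. For this I would use Claim~\ref{3_properties_for_B}(3), $|\mathcal{B}_2(e)|+|\mathcal{V}_1(e)\setminus\bigcup\mathcal{B}_2(e)|\le s$, combined with the fact that the remaining $s-1$ matching edges must live somewhere. Each of them lies either inside a link component, which costs vertices that produce fewer than one edge each, or outside $L(e)$. I expect the slack gained per matching edge to be exactly one vertex, which gives $n-3-(s-1)+1=n-s-1$ after adding $e$ itself. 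For $s=1$ the same count gives $\le n-2$ directly.

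\textbf{Main obstacle: choosing $e$.} The real difficulty is guaranteeing an edge $e$ of a maximum matching with $\nu(\mathcal{H}-V(e))=s-1$ that also satisfies the local bound. Such an $e$ need not exist: every edge of a maximum matching might be replaceable. My first attempt would be to pick $e$ so that the two vertices of $e$ in a pair $B\in\mathcal{B}_2(e)$ of maximal link size serve as a "cover" of all edges through $B$. Then I would argue, via Claims~\ref{1st small claim}--\ref{2nd small claim}, that any matching of size $s$ in $\mathcal{H}-V(e)$, together with an edge through $B$ avoiding it, would create a matching of size $s+1$. If this fails, the fallback is a discharging argument over a fixed maximum matching $M=\{e_1,\dots,e_s\}$. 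Each edge of $\mathcal{H}$ meets $V(M)$, and it is distributed among the $e_i$ it meets. I would then show each $e_i$ receives at most $n-2s$, using the component description above and the fact that edges meeting two matching edges are constrained by Berge-triangle-freeness.

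\textbf{Uniqueness.} Equality forces equality in every step. Each $e_i$ must have $\mathcal{B}_2(e_i)$ consisting of a single pair $\{v_{2i-1},v_{2i}\}$ whose link is all of the $n-2s$ vertices outside $\bigcup_j\{v_{2j-1},v_{2j}\}$. No other edges can exist, which pins down $\mathcal{H}_3(s,n)$.
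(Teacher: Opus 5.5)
\textbf{There is a genuine gap.} It sits in the step you leave as an expectation: the local inequality $|\overline{X}_{\mathcal{H}}(e)|\le n-s-1$.

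\textbf{The reduction itself is fine, and the obstacle you name is not one.} For any edge $e$ we have $\nu(\mathcal{H}-V(e))\le s-1$, since otherwise $e$ would extend an $s$-matching. For $e$ in a maximum matching $M$, equality holds because $M\setminus\{e\}$ survives. So the exact-$\nu$ induction hypothesis applies directly and no monotonicity is needed. This is fortunate, because $x(n-3-2x)$ is not increasing up to $x=s-1$ when $n<4s-1$. Hence the difficulty in your ``main obstacle'' paragraph about $\nu(\mathcal{H}-V(e))$ is vacuous, and your first fallback addresses nothing.

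\textbf{The local bound fails for an arbitrary edge of a maximum matching.} Your heuristic of ``one vertex of slack per remaining matching edge'' breaks because several matching edges can lie entirely inside $V(L_{\mathcal{H}}(e))$ and share the same few star components. Concretely, for $s\ge 5$ take vertices $a,b,c,y_a,y_b,y_c$ and $\ell_a^j,\ell_b^j,\ell_c^j$ for $3\le j\le s$, so $n=3s$. The edges are $abc$, $y_ay_by_c$, $\ell_a^j\ell_b^j\ell_c^j$, and $v\,y_v\,\ell_v^j$ for $v\in\{a,b,c\}$ and $3\le j\le s$.
\begin{itemize}
\item It is Berge-triangle-free: the triangles of the $2$-shadow are exactly $abc$, $y_ay_by_c$, $\ell_a^j\ell_b^j\ell_c^j$ and $v\,y_v\,\ell_v^j$, and in each of them at least two pairs are covered only by one and the same hyperedge.
\item It has $\nu=s$. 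A matching containing $abc$ has at most $s$ edges. A matching avoiding $abc$ uses $k\le 3$ edges of the form $v\,y_v\,\ell_v^j$; each of these meets both $y_ay_by_c$ and $\ell_a^j\ell_b^j\ell_c^j$, so the matching has at most $k+s-3\le s$ edges.
\item The set $\{abc,\,y_ay_by_c,\,\ell_a^3\ell_b^3\ell_c^3,\dots,\ell_a^s\ell_b^s\ell_c^s\}$ is a maximum matching.
\end{itemize}
Yet for $e=abc$ we get $|\overline{X}_{\mathcal{H}}(e)|=1+3(s-2)=3s-5>2s-1=n-s-1$, whereas $|\overline{X}_{\mathcal{H}}(\ell_a^j\ell_b^j\ell_c^j)|=4$. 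So you need a selection rule for $e$, or an averaging argument over a fixed maximum matching that handles edges meeting two or three matching edges. The proposal supplies neither. Your uniqueness paragraph likewise only asserts the final structure rather than deriving it from equality cases.

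\textbf{The missing content is the global count you mention only as a fallback.} This is where the paper does its work, without induction. It fixes a maximum matching $M$ and uses the bipartite graph $G(\mathcal{H},M)$ between outside vertices and matching edges. Every edge meeting $V(\mathcal{H})\setminus V(M)$ receives a pair $am$ with $d_{L_{\mathcal{H}}(m)}(a)\le 1$, and each such pair comes from at most one hyperedge. This injects those edges into $K_{s,n-3s}$, giving at most $s(n-3s)$ of them. Separately, the paper bounds the edges inside $V(M)$ by $s^2$, via a degree count over the three vertices of each matching edge. Tracking equality in both counts yields $\mathcal{H}_3(s,n)$.

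Your sentence ``each $e_i$ receives at most $n-2s$'' names this target, but it omits the mechanism for charging edges that meet several matching edges. Without that mechanism the bound is not established.
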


% \begin{lemma}\label{in_and_out_M_s^3}
%     Let $\mathcal{G}$ be a triangle-free hypergraph with $\nu(\mathcal{G})=s$ and $M^3$ be the maximum matching of $\mathcal{G}$.
%     $$|E(\mathcal{G})|-|E(\mathcal{G}(M^3))|\le s(|V(G)|-3s)),$$
%     where $\mathcal{G}(M^3)$ is the induced graph of $G$ on $V(M^3)$.
% \end{lemma}

We define the following auxiliary graph to prove the theorem. Let $\mathcal{H}$ be a Berge-triangle-free hypergraph and $M_s$ be a maximum matching of $\mathcal{H}$. We define the bipartite graph $G\left(\mathcal{H}, M_s\right)$ as follows:
$$
\begin{gathered}
V(G)=\left(V(\mathcal{H}) \backslash V\left(M_s\right)\right) \cup E\left(M_s\right), \\
E(G(\mathcal{H},M_s))=\left\{a m \mid m \in E\left(M_s\right), a \in V(L_{\mathcal{H}}(m))\cap (V(\mathcal{H}) \backslash V\left(M_s\right)), d_{L_{\mathcal{H}}(m)}(a) \leq 1\right\},
\end{gathered}
$$
where $L_{\mathcal{H}}(m)$ is defined in Section 2. 

\begin{lemma}\label{in_and_out_M_s^3}
    Let $\mathcal{H}$ be a Berge-triangle-free $3$-graph of order $n$.
If $\nu(\mathcal{H})=s$ and $\mathcal{H}\left[V\left(M_s^3\right)\right]=M_s^3$ where $M_s^3$ is a maximum matching, then

$$
e(\mathcal{H})+|\{h\in E(\mathcal{H}):|h\cap V(M_s^3)|=1\}|-s \leq s(n-3s).
$$
In particular,

\begin{align}\label{inequality_3_uniform_1}
e(\mathcal{H}) \leq(n-3 s+1) s,
\end{align}

with equality if and only if for every $m \in M_s^3, L_{\mathcal{H}}\left(m\right)$ is an empty graph of order $n-3 s$.
\end{lemma}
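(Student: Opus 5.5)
Since $M_s^3$ is a maximum matching, no edge of $\mathcal{H}$ misses $V(M_s^3)$. Since $\mathcal{H}[V(M_s^3)]=M_s^3$, every edge other than the $s$ matching edges meets $V(M_s^3)$ in exactly one or exactly two vertices. Write $W=V(\mathcal{H})\setminus V(M_s^3)$, so $|W|=n-3s$. Let $e_2$ and $e_1$ be the numbers of edges meeting $V(M_s^3)$ in two vertices and in one vertex, respectively. Then $e(\mathcal{H})=s+e_2+e_1$, and the displayed inequality is equivalent to
$$e_2+2e_1\le s(n-3s).$$
The plan is to distribute the left-hand side over the pairs $(m,a)$ with $m\in M_s^3$ and $a\in W$, and to show that each pair receives weight at most $1$.

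\textbf{Charging.} An edge $h$ with $|h\cap V(M_s^3)|=2$ lies inside $m\cup\{a\}$ for a unique $m$ and a unique $a\in W$, so $a\in\operatorname{Lk}_{\mathcal{H}}(B,m)$ with $|B|=2$. Such an $a$ is an isolated vertex of $L_{\mathcal{H}}(m)$, and I charge $h$ to $(m,a)$. An edge $h$ with $h\cap V(M_s^3)=\{v\}$ and $v\in m$ gives a $2$-edge $h\setminus\{v\}\subseteq W$ of $L_{\mathcal{H}}(m)$, and I charge $1$ to each of its two endpoints. Hence
$$e_2+2e_1=\sum_{m}\sum_{a\in W}\bigl(\mathbf 1[a\text{ isolated in }L_{\mathcal{H}}(m)]+d_{L_{\mathcal{H}}(m)}(a)\bigr).$$
By Claim~\ref{1st small claim}, the vertex sets $\operatorname{Lk}(B,m)$ for distinct $B$ are disjoint, so each $a$ is either isolated in $L_{\mathcal{H}}(m)$ or lies only in $2$-edges of it. Thus the weight of $(m,a)$ is $\max\{\text{isolated indicator},\, d_{L_{\mathcal{H}}(m)}(a)\}$. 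By Lemma~\ref{triangle_free_edge_link} with $r=3$, the components of $L_{\mathcal{H}}(m)$ are isolated vertices and stars $S_p^{(2)}$; the case $T_{2,q}^{(2)}$ with $q\ge2$ cannot occur for $2$-edges. So the weight exceeds $1$ only at the center of a star with $p\ge2$. The pairs of degree at most $1$ are exactly the edges of the auxiliary graph $G(\mathcal{H},M_s^3)$, which is why that graph was introduced.

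\textbf{Main obstacle: stars with $p\ge2$.} Suppose $L_{\mathcal{H}}(m)$ contains such a star with center $a$ and leaves $x_1,\dots,x_p$. By Claim~\ref{2nd small claim} its edges are $vax_i$ for a single $v\in m$. Write $m=\{v,u,w\}$. Maximality of the matching forces $u$ and $w$ to have no edge disjoint from $\{v,a,x_i\}$ for some $i$; otherwise we replace $m$ by $vax_i$ plus that edge. I would use this to show that $u$ and $w$ contribute nothing outside $m$, so the entire weight of $m$ comes from the link of $v$.

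I then need the excess $2p-(p+1)$ at the star to be absorbed, by pairs $(m,a')$ of weight $0$ or by the subtracted $s$. This is the step I expect to fail. In the configuration $s=1$, $m=vuw$, with edges $vax_1,\dots,vax_p$ and $n=p+4$, the left-hand side equals $2p$ while $s(n-3s)=p+1$. So the displayed inequality cannot hold as worded unless stars with $p\ge2$ are excluded.

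I would therefore first try to prove the per-pair bound only for the edges of $G(\mathcal{H},M_s^3)$. For the star components I would establish the weaker per-$m$ bound $\sum_{a\in W}\mathbf 1[\,a\in V(L_{\mathcal{H}}(m))\,]\cdot(\text{weight}/\text{multiplicity})\le |W|$, counting each type-one edge once. Summing over $m$ gives $e_2+e_1\le s(n-3s)$, which is exactly \eqref{inequality_3_uniform_1}. Equality in that bound forces every vertex of $W$ to be isolated in every $L_{\mathcal{H}}(m)$, that is, each $L_{\mathcal{H}}(m)$ is an empty graph on $n-3s$ vertices.
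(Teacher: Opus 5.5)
Your counterexample is correct, and it is the key point here. Take $M_s^3=\{vuw\}$ and the edges $vax_1,\dots,vax_p$, with $p\ge2$ and $n=p+4$. This hypergraph is Berge-triangle-free, has $\nu=1$, and satisfies $\mathcal H[\{v,u,w\}]=\{vuw\}$. Yet the left side of the first display is $(p+1)+p-1=2p>p+1=s(n-3s)$. So that inequality is false, and no proof of it exists. The paper's own argument breaks exactly at the star. For every $h$ with $|h\cap V(M_s^3)|=1$ it adds the ``missing'' pair to $G(\mathcal H,M_s^3)$ and credits $h$ with two pairs. But all $p$ edges $vax_i$ have the same missing pair $am$, so one edge of $K_{s,n-3s}$ is counted $p$ times.

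For the part that survives, \eqref{inequality_3_uniform_1} with its equality case, your sketch has a genuine gap.

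\textbf{Cross edges.} An edge meeting $V(M_s^3)$ in two vertices need not lie in $m\cup\{a\}$ for a single $m$. The hypothesis $\mathcal H[V(M_s^3)]=M_s^3$ still allows $h=xya$ with $x\in m$, $y\in m'\ne m$ and $a\in W$. For such an $h$, $a$ is not isolated in $L_{\mathcal H}(m)$; instead $h$ contributes the $2$-edge $\{y,a\}$ to $L_{\mathcal H}(m)$ and $\{x,a\}$ to $L_{\mathcal H}(m')$. Your charging identity counts $h$ twice, and your case analysis never treats it. Your undefined ``weight/multiplicity'' per-$m$ bound also does not say which of $m,m'$ pays for $h$. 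This matters: charging both $xy_1a$ and $xy_2a$, with $y_1\in m'$ and $y_2\in m''$, to $m$ puts two edges on the single $W$-vertex $a$.

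\textbf{What is needed.} You need the paper's injection. Send each $h\notin M_s^3$ to a pair $(m,a)$ with $a\in h\cap W$, $h\cap m\ne\emptyset$ and $d_{L_{\mathcal H}(m)}(a)\le1$, i.e.\ to an edge of $G(\mathcal H,M_s^3)$. The degree condition makes $h$ the only edge through $a$ meeting $m$, so distinct edges get distinct pairs. Take $(m,a)$ if $h\subseteq m\cup\{a\}$, and a leaf of the star if $h=vax$.

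\textbf{The missing idea} is the cross case. Suppose $d_{L_{\mathcal H}(m)}(a)\ge2$ and $d_{L_{\mathcal H}(m')}(a)\ge2$. Then Claim~\ref{2nd small claim} yields edges $xaw$ and $yaz$ other than $h$. Together with $h$ they form a Berge triangle on $x,y,a$, so one of $(m,a)$, $(m',a)$ is always available.

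\textbf{Equality.} Your one-line equality claim also needs an argument. Equality forces $G(\mathcal H,M_s^3)=K_{s,n-3s}$, with distinct pairs determining distinct edges. But a type-one edge $vax$ is determined by both $(m,a)$ and $(m,x)$, and a cross edge $xya$ by both $(m,a)$ and $(m',a)$. Hence only edges inside some $m\cup\{a\}$ remain, and each $L_{\mathcal H}(m)$ is edgeless on $W$.
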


\begin{rmk}
The equality condition is equivalent to the following: for every vertex $v \in V(\mathcal{H}) \setminus V(M_s^3)$, its link graph $L_{\mathcal{H}}(v)$ (on the vertex set $V(M_s^3)$) consists of exactly $s$ disjoint edges, each of which is a $2$-element subset of some $m \in M_s^3$. Moreover, these $s$ edges are the same for all such $v$; i.e., there exists a fixed $s$-matching in the shadow of $M_s^3$ such that $L_{\mathcal{H}}(v)$ is precisely that matching for every $v$ outside $V(M_s^3)$.
\end{rmk}

\begin{proof}
    Let $h=uvx$ be a hyperedge of $E(\mathcal{H})\backslash E(M_s^3)$. By $\nu(\mathcal{H})=s$, $1\le|h\cap V(M_s^3)|\le 2$. Without loss of generality, let it be $u\in V(m)$ with $m\in E(M_s)$ and $v\in V(\mathcal{H})\backslash V(M_s)$. Since $\mathcal{H}$ is a Berge-triangle-free hypergraph and $M_s^3$ is a maximum matching, we can let $G(\mathcal{H},M_s^3)$ be a bipartite graph defined above. Let $\mathcal{G}_h$ be the set of all edges $a m \in G\left(\mathcal{H}, M_s^3\right)$ satisfying $a \in h$ and $m \cap h \neq \varnothing$. We will show this set is nonempty by definition for any hyperedge $h\in E(\mathcal{H})\backslash E(M_s^3)$.
    
    i) $|h\cap V(M_s^3)|=1$, which implies $x\in  V(\mathcal{H})\backslash V(M_s)$. By Lemma~\ref{triangle_free_edge_link}, $\min(d_{L_\mathcal{H}(m)}(x),d_{L_\mathcal{H}(m)}(v))=1$.
    If $d_{L_\mathcal{H}(m)}(x)=d_{L_\mathcal{H}(m)}(v)=1$, then $vm,xm\in E(G(\mathcal{H},M_s^3))$; If $\max(d_{L_\mathcal{H}(m)}(x),d_{L_\mathcal{H}(m)}(v))>1$, exactly one of $vm$ and $xm$ belong to $E(G(\mathcal{H},M_s^3))$. 

    ii) $|h\cap V(M_s^3)|=2$, which implies $x\in m'$ with $m'\in E(M_s)$. If $m=m'$, then $vm\in E(G(\mathcal{H},M_s^3))$. If $m\ne m'$, by Lemma~\ref{triangle_free_edge_link}, $\min(d_{L_\mathcal{H}(m)}(x),d_{L_\mathcal{H}(m)}(v))=1$. $d_{L_\mathcal{H}(m)}(v)=1$ if and only if $vm\in E(G(\mathcal{H},M_s^3))$. If $d_{L_\mathcal{H}(m)}(v)>1$, then $d_{L_\mathcal{H}(m)}(x)=1$ and there is a $h'(\ne h)\supsetneq\{u,v\}$. Moreover, we have $d_{L_{\mathcal{H}}\left(m^{\prime}\right)}(v) \leq 1$; otherwise, there would exist a hyperedge $h^{\prime \prime}$ containing $v$ and a vertex of $m^{\prime}$, in which case either $h h^{\prime} h^{\prime \prime}$ or $h h^{\prime \prime} m^{\prime}$ forms a Berge triangle. In such case, $vm'\in E(G(\mathcal{H},M_s^3))$.

    On the other hand, for any edge $am\in G(\mathcal{H},M_s^3)$, there is exactly one hyperedge  $h\in E(\mathcal{H})\backslash E(M_s^3)$ that $a\in h$ and $m\cap h\ne\emptyset$ by $d_{L_\mathcal{H}(m)}(a)\le 1$. Therefore, 
    
    \begin{align}\label{inequality_3_uniform_2}
        e(\mathcal{H})\le e(M_s^3)+\sum\limits_{h\in V(\mathcal{H})\backslash V(M_s)}|\mathcal{G}_h|= e(M_s^3)+e(G(\mathcal{H},M_s^3))\le s+(n-3s)s.
    \end{align}
    
    Moreover, for each hyperedge $h=uvx$ with $|h\cap V(M_s^3)|=1$, case i implies either $|\mathcal{G}_h|=2$ or exactly one of $vm$ and $xm$ belongs to $G(\mathcal{H},M_s^3)$. If we add the missing edge to $E(G(\mathcal{H},M_s^3))$ for every such hyperedge $h$, then the resulting graph remains a subgraph of $K_{s,n-3s}$, which implies a strong version of the inequality~\ref{inequality_3_uniform_1}:
    
    $$
    e(\mathcal{H})+|\{h\in E(\mathcal{H}):|h\cap V(M_s^3)|=1\}|\le e(M_s^3)+\sum\limits_{|h\cap V(M_s^3)|=1}2+\sum\limits_{|h\cap V(M_s^3)|=2}|\mathcal{G}_h| \le s+(n-3s)s
    $$
    Therefore, equality in (\ref{inequality_3_uniform_1}) implies $\left|h \cap V\left(M_s^3\right)\right|=2$ for every $h \in E(\mathcal{H}) \backslash E\left(M_s^3\right)$. Furthermore, every hyperedge $h \in E(\mathcal{H}) \backslash E\left(M_s^3\right)$ must intersect the same matching edge. Suppose to the contrary that there exists $h=u x y$ intersecting two distinct edges of $M_s^3$, with $x=h \cap m$ and $y=h \cap m^{\prime}$. The two inequalities in (\ref{inequality_3_uniform_2}) have opposite equality conditions: the first requires at least two hyperedges containing $\{u, x\}$ or $\{u, y\}$, whereas the second requires $h$ to be the unique hyperedge containing each of $\{u, x\}$ and $\{u, y\}$. This yields a contradiction. Now, $G(\mathcal{H},M_s^3)=s(n-3s)$ implies $L_{\mathcal{H}}\left(m\right)$ is an empty graph of order $n-3 s$. 
    
    On the other hand, if $L_{\mathcal{H}}\left(m\right)$ is an empty graph of order $n-3 s$, Claim~\ref{1st small claim} ensures that $\mathcal{H}$ is a unique graph of size $s(n-3s+1)$.
% If there exist $e_1(\ne h)\supset\{v,x\}$ and $ e_2(\ne h)\supset\{v',x\}$, then $he_1e_2$ is a Berge triangle. Therefore, at least one of the pairs $(v,h_i)$ and $(v',h_i)$ will be generated by $h$.
% ii)$|h\cap V(M^3)|=2$. If both two vertices in $V(M^3)$ are in the same hyperedge $h_i$, then $(v,h_i)$ will be generated by $h$ because any hyperedge $h'(\neq h)\supset v$ satisfying $h'\cap h_i\ne \emptyset$ will lead to the existence of Berge triangle $hh'h_i$. If two vertices in $V(M^3)$ are in the different hyperedge $h_i$ and $h_j$, at least one of the pairs $(v,h_i)$ and $(v,h_j)$ will be generated by $h$ because of Berge triangle-free property.
\end{proof}

By partitioning the Berge-triangle-free hypergraph into distinct subgraphs and applying the above lemma, we can prove Theorem~\ref{main for 3-uniform}.

\begin{proof}[Proof of Theorem~\ref{main for 3-uniform}]
    Let $V({\mathcal{H}})=\{v_1,v_2,v_3\ldots,v_{n}\}$ and $M_s^3$ be an $s$-matching with $E(M_s^3)=\{m_1,m_2,\ldots,m_s\}$ with $m_i=v_{3i-2}v_{3i-1}v_{3i}$ for $i \in [s]$. $\mathcal{H_M}$ be the induced subgraph of $\mathcal{H}$ on the vertex set $V(M_s^3)$. By lemma~\ref{in_and_out_M_s^3}, $e(\mathcal{H})-e(\mathcal{H_M})\le s(n-3s)$ with equality if and only if $N_{\mathcal{H}}(v)$ is an empty graph of order $n-3s$ for every vertex $v \in V(\mathcal{H}) \setminus V(M_s^3)$, the neighborhood $N_{\mathcal{H}}(v)$ contains exactly two vertices from each matching edge $m_i$, and moreover this choice of two vertices from each $m_i$ is the same for all such $v$. For instance, after a suitable relabeling of vertices within each $m_i$, we may assume
    \begin{align}\label{3_uniform_condition_1_example}
        \bigcup_{i=1}^s\{v_{3i-1}, v_{3i}\}\text{ for every } v \in V(\mathcal{H}) \setminus V(M_s^3).
    \end{align}
    
    Next, we will show $|E(\mathcal{H_M})|\le s^2$. It holds when $s=1$. If $s\ge 2$, then for $m_1=\{x_1,x_2,x_3\}(=\{v_1,v_2,v_3\})$ let $E_i=\{e\in E(\mathcal{H_M})|e\cap h_1=\{x_i\}\},E_{i j}=\{e\in E(\mathcal{H_M})|e\cap h_1=\{x_i,x_j\}\}$ with $n_i=|E_i|,n_{ij}=|E_{i j}|$. Because of Berge triangle-free, at most one of $\{N_{12},N_{23},N_{13}\}$ is not empty. Without loss of generality, let $N_{23}=N_{13}=\emptyset$. Since $\{h_2,\ldots,h_s\}$ is the maximum matching of the graph induced by $E_1\cup E_2 \cup E_{12}\cup \{h_2,\ldots,h_s\}$, by lemma~\ref{in_and_out_M_s^3}, $n_1+n_2+2n_{12}=e(E_1\cup E_2 \cup E_{12})+e(E_{12})\le 2(s-1)$. On the other hand, $|E_3|\le s-1$ by lemma~\ref{in_and_out_M_s^3} with equality if and only if the link of $v_3$ in $\mathcal{H}_M$ is an s-matching.. Hence, $\frac{1}{3}\sum_{v\in h_1}d(v)=\frac{1}{3}(n_1+n_2+n_3+2n_{12})+1\le s$. By symmetry, analogous inequalities hold for the remaining edges $h_i$ (where $i\in [s]$). Therefore, $|E(\mathcal{H_M})|=\frac{1}{3}\sum_{v\in V(M_s^3)}d(v)\le s^2$.

    Finally, the extremal hypergraph requires that all previous equalities hold. Without loss of generality, let (\ref{3_uniform_condition_1_example}) hold. If $s=1$, this graph is exactly $\mathcal{H}_3(s,n)$. If $s>1$, note that when $|E_3|= s-1$, Claim~\ref{1st small claim} guarantees that $N(x_1)=\bigcup_{i=1}^s\{v_{3i-1},v_{3i}\}$ and $x_1=v_1$. By symmetry, we can prove that the extremal graph can only be $\mathcal{H}_3(s,n)$. Specially, when $n = 3s$,  $\mathcal{H} = \mathcal{H}_\mathcal{M}$; in this case, assuming $N(x_1) = \bigcup_{i=1}^s \{v_{3i-1}, v_{3i}\}$ and applying the equality conditions to vertices on the other edges yields $x_1 = v_1$, and symmetry again forces the extremal graph to be $\mathcal{H}_3(s,n)$.
\end{proof}

Note that $\mathrm{ex}_3(n,{\mathcal{B}(K_3),M_{s+1}^3})$ is obtained by applying Theorem~\ref{main for 3-uniform} to all graphs with matching number at most $s$ and then taking the maximum. Thus we naturally have the following corollary.

\begin{cor}
    \begin{equation} 
        \mathrm{ex}_3(n,\{\mathcal{B}(K_3),M_{s+1}^3\})=
    \begin{cases}
        \lfloor \frac{n^2}{8}\rfloor & n\le 4s \\
        s(n-2s) &n>4s
    \end{cases}
    \end{equation}  
\end{cor}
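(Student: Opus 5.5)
The corollary follows by maximizing the bound of Theorem~\ref{main for 3-uniform} over all admissible matching numbers, together with Gy\H{o}ri's bound to control small matching numbers. Since $\mathrm{ex}_3(n,\{\mathcal{B}(K_3),M_{s+1}^3\})$ ranges over hypergraphs with $\nu(\mathcal{H})=t\le s$, we have
\[
\mathrm{ex}_3(n,\{\mathcal{B}(K_3),M_{s+1}^3\})=\max_{0\le t\le \min(s,\lfloor n/3\rfloor)} \max\{e(\mathcal{H}):\nu(\mathcal{H})=t\}.
\]
For every admissible $t$ we have $n\ge 3t$, so Theorem~\ref{main for 3-uniform} applies and gives $\max\{e(\mathcal{H}):\nu(\mathcal{H})=t\}= t(n-2t)$. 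The upper bound comes from the theorem, and equality is realized by $\mathcal{H}_3(t,n)$, which is Berge-triangle-free with matching number $t$. Hence
\[
\mathrm{ex}_3(n,\{\mathcal{B}(K_3),M_{s+1}^3\})=\max_{0\le t\le \min(s,\lfloor n/3\rfloor)} t(n-2t).
\]

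Next I would maximize the concave quadratic $\phi(t)=t(n-2t)$ over integers. Its real maximizer is $t=n/4$. If $n>4s$, then $\phi$ is increasing on $[0,s]$, and since $s\le n/3$ the maximum is $\phi(s)=s(n-2s)$. If $n\le 4s$, then $n/4\le s$, and also $n/4\le n/3$, so the constraint is inactive and the maximum is at $t\in\{\lfloor n/4\rfloor,\lceil n/4\rceil\}$. It remains to check that $\max_t t(n-2t)=\lfloor n^2/8\rfloor$.

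That final identity is the main obstacle, and I would handle it by writing $n=4k+j$ with $j\in\{0,1,2,3\}$ and computing both sides:
\begin{itemize}
\item For $j=0$: $\phi(k)=2k^2=n^2/8$.
\item For $j=1$: $\phi(k)=2k^2+k$ and $\lfloor(16k^2+8k+1)/8\rfloor=2k^2+k$.
\item For $j=2$: $\phi(k)=\phi(k+1)=2k^2+2k$ and $\lfloor(16k^2+16k+4)/8\rfloor=2k^2+2k$.
\item For $j=3$: $\phi(k+1)=2k^2+3k+1$ and $\lfloor(16k^2+24k+9)/8\rfloor=2k^2+3k+1$.
\end{itemize}
In each case I also need the maximizing $t$ to lie in the admissible range $t\le\min(s,\lfloor n/3\rfloor)$. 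This needs care only for $t=\lceil n/4\rceil$. That value satisfies $\lceil n/4\rceil\le s$ because $n\le 4s$. It satisfies $\lceil n/4\rceil\le\lfloor n/3\rfloor$ for $n\ge 3$, which I would verify for small $n$ directly. For $n\le 2$ both sides are $0$.

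This recovers Gy\H{o}ri's $n^2/8$ bound as the unconstrained case $s\ge n/4$.
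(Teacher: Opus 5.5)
Your proposal is correct and follows the same argument the paper intends. You apply Theorem~\ref{main for 3-uniform} to each matching number $t\le s$ (where $n\ge 3t$ holds automatically), realize $t(n-2t)$ with $\mathcal{H}_3(t,n)$, and then maximize $t(n-2t)$ over admissible integers; the paper leaves that maximization implicit, and you carry it out residue by residue.

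One slip is harmless: $\lceil n/4\rceil\le\lfloor n/3\rfloor$ fails at $n=5$. You only need $\lceil n/4\rceil$ when $n=4k+3$, and there $k+1\le\lfloor (4k+3)/3\rfloor$ does hold.
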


The above corollary directly implies the result of Gy\H{o}ri~\cite{gyHori2006triangle} concerning $\mathrm{ex}_3(n,\mathcal{B}(K_3))$.

\begin{cor}[Gy\H{o}ri~\cite{gyHori2006triangle}]
    $$\mathrm{ex}_3(n,\mathcal{B}(K_3))=\left\lfloor \frac{n^2}{8}\right\rfloor.$$
\end{cor}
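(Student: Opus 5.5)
We derive the equality $\mathrm{ex}_3(n,\mathcal{B}(K_3))=\lfloor n^2/8\rfloor$ from the preceding corollary by maximizing over the matching number. Any $3$-graph $\mathcal{H}$ on $n$ vertices has $s:=\nu(\mathcal{H})\le \lfloor n/3\rfloor$. So $\mathcal{H}$ is $\{\mathcal{B}(K_3),M_{s'+1}^3\}$-free for every $s'\ge s$, and in particular for $s'=\lfloor n/3\rfloor$. Hence
\[
\mathrm{ex}_3(n,\mathcal{B}(K_3))=\mathrm{ex}_3\bigl(n,\{\mathcal{B}(K_3),M_{\lfloor n/3\rfloor+1}^3\}\bigr).
\]
Since $n\le 4\lfloor n/3\rfloor$ for $n\ge 3$, we are in the first branch of the corollary, which gives the value $\lfloor n^2/8\rfloor$. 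The cases $n\le 2$ are trivial.

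It is cleaner, and more robust to any sloppiness in that first branch, to argue directly from Theorem~\ref{main for 3-uniform}. For the upper bound, let $\mathcal{H}$ have $\nu(\mathcal{H})=s\ge 1$; if $s=0$ there is nothing to prove. Then $n\ge 3s$, and Theorem~\ref{main for 3-uniform} gives $e(\mathcal{H})\le s(n-2s)$. As a function of real $s$, the quantity $s(n-2s)$ is maximized at $s=n/4$ with value $n^2/8$. Over integers $s$, the maximum of $s(n-2s)$ is attained at $s=\lfloor n/4\rfloor$ or $\lceil n/4\rceil$, and both satisfy $3s\le n$ (for $n\ge 3$). A short check over the residues $n\bmod 4$ shows this integer maximum equals $\lfloor n^2/8\rfloor$:
\begin{itemize}
\item $n=4k$: the value is $2k^2$;
\item $n=4k+1$: the value is $k(2k+1)=\lfloor (4k+1)^2/8\rfloor$;
\item $n=4k+2$: taking $s=k$ or $s=k+1$ gives $k(2k+2)$, and $\lfloor(4k+2)^2/8\rfloor=2k^2+2k$;
\item $n=4k+3$: taking $s=k+1$ gives $(k+1)(2k+1)=2k^2+3k+1=\lfloor(16k^2+24k+9)/8\rfloor$.
\end{itemize}

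For the lower bound, take the optimal $s$ above. It satisfies $n\ge 3s$, so $\mathcal{H}_3(s,n)$ is defined and has exactly $s(n-2s)$ edges. We must check that it is Berge-$K_3$-free. Any two of its edges meet either in a pair $v_{2i-1}v_{2i}$ or in a single vertex $u_j$. A Berge triangle $xyz$ would need three distinct edges covering the pairs $xy$, $yz$, $xz$. Since the edges form a complete bipartite structure between the "pairs" and the $u_j$'s, the shadow graph is a blow-up in which every edge contains at most one $u$-vertex. Any triangle in the shadow therefore uses two vertices of one pair $\{v_{2i-1},v_{2i}\}$ and one $u_j$, and all three of its pairs lie in the single hyperedge $v_{2i-1}v_{2i}u_j$. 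So it cannot be realized by three distinct edges, and $\mathcal{H}_3(s,n)$ is Berge-$K_3$-free.

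The only mildly delicate step is the integer optimization together with the constraint $n\ge 3s$, which must hold for the optimal $s$ (true since $\lceil n/4\rceil\le n/3$ for $n\ge3$). Everything else is immediate from Theorem~\ref{main for 3-uniform}.
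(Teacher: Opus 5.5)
Your proposal is correct and is essentially the paper's argument: the paper gets Gy\H{o}ri's bound by maximizing the bounded-matching value $s(n-2s)$ from Theorem~\ref{main for 3-uniform} (via the preceding corollary) over $s$, with $\mathcal{H}_3(s,n)$ at the optimal $s$ giving the lower bound, and you do the same. One small slip in your first paragraph: $n\le 4\lfloor n/3\rfloor$ fails at $n=5$, so there you should take a larger $s'$ (for example $s'=n$), although your direct maximization already covers that case.
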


\subsection{\texorpdfstring{$4$}{4}-uniform Berge-\texorpdfstring{$K_3$}{K3}}

In this subsection, unless otherwise stated, all hypergraphs are Berge-triangle-free $4$-graphs. For an edge $e\in E(\mathcal H)$, recall that
$$
X_{\mathcal H}(e)=\{f\in E(\mathcal H)\setminus\{e\}:f\cap e\ne\emptyset\}
$$
and
$$
\overline X_{\mathcal H}(e)=X_{\mathcal H}(e)\cup\{e\}.
$$
We also write
$$
q_{\mathcal H}(e)=\left|\bigcup_{h\in\overline X_{\mathcal H}(e)}h\right|.
$$
When the underlying hypergraph is clear, we simply write $X(e)$, $\overline X(e)$ and $q(e)$.

\begin{lemma}\label{triangle_free_edge_link_4_uniform_version}
Let $\mathcal H$ be a Berge-triangle-free $4$-graph and let $e\in E(\mathcal H)$. Then
$$
|\overline X_{\mathcal H}(e)|
\le
\begin{cases}
\dfrac{q(e)-1}{2}, & q(e)\equiv1\pmod4,\\[6pt]
\left\lfloor\dfrac{q(e)-2}{2}\right\rfloor, & q(e)\not\equiv1\pmod4.
\end{cases}
$$
Moreover, if $q(e)\equiv1\pmod4$ and
$$
|\overline X_{\mathcal H}(e)|=\dfrac{q(e)-1}{2},
$$
then $L_{\mathcal H}(e)$ consists of one isolated vertex and a vertex-disjoint union of copies of $T_{2,2}^{(3)}$. Equivalently, $X_{\mathcal H}^3(e)$ consists of one edge, $X_{\mathcal H}^2(e)=\emptyset$, and every remaining edge of $X_{\mathcal H}(e)$ belongs to $X_{\mathcal H}^{1,3}(e)$, paired according to copies of $T_{2,2}^{(3)}$.
\end{lemma}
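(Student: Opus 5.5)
The plan is to write $q(e)=4+n_0$ with $n_0=|V(L_{\mathcal H}(e))|$ and bound $|X_{\mathcal H}(e)|$ component by component using Lemma~\ref{triangle_free_edge_link} with $r=4$. Each component $C$ of $L_{\mathcal H}(e)$ has some number $v(C)$ of vertices and accounts for some number $x(C)$ of edges of $X_{\mathcal H}(e)$, so $|X_{\mathcal H}(e)|=\sum_C x(C)$ and $n_0=\sum_C v(C)$. For $r=4$ the possible components and their contributions are:
\begin{itemize}
\item an isolated vertex coming from some $B\in\mathcal B_3(e)$, i.e.\ an edge of $X^3_{\mathcal H}(e)$, with $x=v=1$;
\item a $2$-set coming from some $B\in\mathcal B_2(e)$, with $x=1$, $v=2$;
\item a single $3$-edge from $X^{1,1}_{\mathcal H}(e)$, with $x=1$, $v=3$;
\item a copy of $S^{(3)}_p$ from a $\sim_2$-class, with $x=p$, $v=2p+1$;
\item a copy of $T^{(3)}_{2,q}$ from a $\sim_3$-class, with $x=2$ and $v=6-q$.
\end{itemize}
Two distinct $3$-sets cannot share $3$ vertices, so necessarily $q=2$ and $v=4$. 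Hence every component except the $\mathcal B_3$-type isolated vertex satisfies $x(C)\le v(C)/2$. Equality holds only for $\mathcal B_2$-components and copies of $T^{(3)}_{2,2}$, and every other component has a deficit $v(C)/2-x(C)\ge 1/2$.

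Next I would control $t=|\mathcal B_3(e)|$. By Claim~\ref{3_properties_for_B}(1) the members of $\mathcal B_2(e)\cup\mathcal B_3(e)$ are pairwise disjoint subsets of the $4$-set $e$, so $t\le 1$. Since $\operatorname{Lk}_{\mathcal H}(B,e)$ is a single vertex when $|B|=3$, Lemma~\ref{triangle_free_edge_link} gives at most one isolated vertex of this kind. If $t=1$, say $B=\{a,b,c\}$, then no $B'\in\mathcal B_2(e)$ can be disjoint from $B$, because only one vertex of $e$ remains; thus $X^2_{\mathcal H}(e)=\emptyset$.

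Case $t=0$: every component has $x\le v/2$, so $|X_{\mathcal H}(e)|\le n_0/2$. Hence $|\overline X_{\mathcal H}(e)|\le \lfloor (q-4)/2\rfloor+1=\lfloor (q-2)/2\rfloor$, which is at most $(q-1)/2$, so both cases of the statement are covered.

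Case $t=1$: the other components cover $n_0-1$ vertices and are single $3$-edges, copies of $S_p^{(3)}$, or copies of $T^{(3)}_{2,2}$. This gives $|X_{\mathcal H}(e)|\le 1+(n_0-1)/2$, i.e.\ $|\overline X_{\mathcal H}(e)|\le (q-1)/2$.
\begin{itemize}
\item If all these components are copies of $T^{(3)}_{2,2}$, then $4\mid n_0-1$, i.e.\ $q\equiv 1\pmod 4$; this is exactly the claimed equality structure (one isolated vertex, $X^2_{\mathcal H}(e)=\emptyset$, all remaining edges in $X^{1,3}_{\mathcal H}(e)$).
\item Otherwise some component has deficit at least $1/2$. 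Then $|X_{\mathcal H}(e)|\le n_0/2$, and by integrality $|\overline X_{\mathcal H}(e)|\le\lfloor (q-2)/2\rfloor$.
\end{itemize}
Consequently, whenever $q\not\equiv1\pmod 4$ the second alternative must occur and the bound $\lfloor (q-2)/2\rfloor$ holds. When $q\equiv 1\pmod 4$ and equality $(q-1)/2$ holds, the $t=0$ case is excluded, since there $\lfloor (q-2)/2\rfloor<(q-1)/2$. So $t=1$ and all components are copies of $T^{(3)}_{2,2}$, giving the stated characterization.

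The main obstacle is justifying the component list rigorously for $r=4$. In particular I must check that $|\operatorname{Lk}_{\mathcal H}(B,e)|=1$ for $|B|\ge 2$, which the paper asserts from Claim~\ref{1st small claim}. For $|B|=3$ this is immediate: two such edges together with $e$ form a Berge triangle on $B$. For $|B|=2$ I would verify it directly, falling back on the remark following Claim~\ref{3_properties_for_B} if needed. I must also check that different links $\operatorname{Lk}_{\mathcal H}(B,e)$ are vertex-disjoint (Claims~\ref{1st small claim}, \ref{2nd small claim}), so that the counting $n_0=\sum_C v(C)$ is exact.
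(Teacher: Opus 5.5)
Your argument is correct and is essentially the paper's proof: you count component by component via Lemma~\ref{triangle_free_edge_link}, allow at most one isolated vertex coming from $\mathcal B_3(e)$, and split on whether that vertex is present; your deficit bookkeeping and your explicit remark that $\mathcal B_3(e)\neq\emptyset$ forces $X^2_{\mathcal H}(e)=\emptyset$ just spell out steps the paper compresses. One correction to your ``main obstacle'': $|\operatorname{Lk}_{\mathcal H}(B,e)|=1$ is false for $|B|=2$, so do not try to prove it; for instance, three $4$-sets pairwise meeting exactly in a common pair $\{a,b\}$ contain no Berge triangle, and the same situation occurs in $\mathcal H_4(s,n)$. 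You also do not need it, because by Claim~\ref{1st small claim} the $2$-sets in these links are pairwise disjoint and disjoint from all other link vertices, so each is its own component with $x=1$, $v=2$ and your count is unchanged.
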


\begin{proof}
Let
$$
m=|V(L_{\mathcal H}(e))|=q(e)-4.
$$
By Lemma~\ref{triangle_free_edge_link}, every connected component of $L_{\mathcal H}(e)$ is one of the following four types: an isolated vertex, a $2$-edge, a copy of $S_p^{(3)}$, or a copy of $T_{2,q}^{(3)}$. Since we are in the $4$-uniform case, the last possibility is necessarily $T_{2,2}^{(3)}$. The corresponding numbers of vertices and the corresponding contributions to $|X_{\mathcal H}(e)|$ are as follows:
$$
\begin{array}{c|c|c}
\text{component }A & |V(A)| & \text{contribution to }|X_{\mathcal H}(e)|\\
\hline
\text{isolated vertex} & 1 & 1\\
2\text{-edge} & 2 & 1\\
S_p^{(3)} & 2p+1 & p\\
T_{2,2}^{(3)} & 4 & 2.
\end{array}
$$
Here an isolated vertex corresponds to an edge of $X_{\mathcal H}^3(e)$. Since $\mathcal H$ is $4$-uniform, Claim~\ref{3_properties_for_B} implies that there is at most one such isolated vertex.

First suppose that $L_{\mathcal H}(e)$ has no isolated vertex. Then every connected component contributes at most one edge per two vertices. Hence
$$
|X_{\mathcal H}(e)|\le \left\lfloor\frac m2\right\rfloor.
$$
Therefore
$$
|\overline X_{\mathcal H}(e)|\le 1+\left\lfloor\frac{q(e)-4}{2}\right\rfloor=\left\lfloor\frac{q(e)-2}{2}\right\rfloor.
$$

Now suppose that $L_{\mathcal H}(e)$ contains an isolated vertex. Then $X_{\mathcal H}^3(e)$ consists of exactly one edge. Since an edge in $X_{\mathcal H}^3(e)$ meets $e$ in at least three vertices, Claim~\ref{3_properties_for_B} implies that no component corresponding to $X_{\mathcal H}^2(e)$ can occur. Thus all remaining non-isolated components are copies of $S_p^{(3)}$ or $T_{2,2}^{(3)}$. After removing the isolated vertex, there are $m-1=q(e)-5$ vertices left, and the maximum possible contribution from these vertices is $(m-1)/2$. Equality is possible only when all remaining components are copies of $T_{2,2}^{(3)}$, and this requires $m-1\equiv0\pmod4$, equivalently $q(e)\equiv1\pmod4$. Hence, if $q(e)\equiv1\pmod4$, then
$$
|\overline X_{\mathcal H}(e)|\le 2+\frac{q(e)-5}{2}=\frac{q(e)-1}{2},
$$
and if $q(e)\not\equiv1\pmod4$, then
$$
|\overline X_{\mathcal H}(e)|\le \left\lfloor\frac{q(e)-2}{2}\right\rfloor.
$$
The equality statement follows from the same discussion.
\end{proof}

\begin{lemma}\label{one_matching_4_uniform}
Let $\mathcal H$ be a Berge-triangle-free $4$-graph on $n$ vertices with $\nu(\mathcal H)\le1$. Then
$$
e(\mathcal H)\le
\begin{cases}
\dfrac{n-1}{2}, & n\equiv1\pmod4,\\[6pt]
\left\lfloor\dfrac{n-2}{2}\right\rfloor, & n\not\equiv1\pmod4.
\end{cases}
$$
Moreover, both bounds are attained by $\mathcal H_4(1,n)$.
\end{lemma}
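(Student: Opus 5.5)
The plan is to reduce the statement to Lemma~\ref{triangle_free_edge_link_4_uniform_version}, applied to a single edge. If $e(\mathcal H)=0$ there is nothing to prove, since both bounds are nonnegative for $n\ge 4$. So assume $\mathcal H$ has an edge $e$.

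The key observation is that $\nu(\mathcal H)\le 1$ forces every edge to meet every other edge. Otherwise two disjoint edges would form a matching of size $2$. Hence
$$
E(\mathcal H)=\overline X_{\mathcal H}(e).
$$
Moreover $\bigcup_{h\in\overline X(e)}h\subseteq V(\mathcal H)$, so $q(e)\le n$, and Lemma~\ref{triangle_free_edge_link_4_uniform_version} bounds $e(\mathcal H)=|\overline X(e)|$ in terms of $q(e)$. Write $g(q)$ for the bound of that lemma, namely $(q-1)/2$ if $q\equiv1\pmod4$ and $\lfloor (q-2)/2\rfloor$ otherwise. It remains to check that $g(q)\le g(n)$ whenever $q\le n$.

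\textbf{Monotonicity of $g$.} I will verify this by comparing consecutive values:
\begin{itemize}
\item If $q\equiv1\pmod 4$, then $g(q)=(q-1)/2$ and $g(q+1)=\lfloor(q-1)/2\rfloor=(q-1)/2$, so $g$ does not drop.
\item If $q\equiv0\pmod4$, the step to $q+1$ goes from $(q-2)/2$ to $q/2$.
\item In the remaining residues, $\lfloor(q-2)/2\rfloor$ is itself nondecreasing.
\end{itemize}
Hence $g$ is nondecreasing, and $e(\mathcal H)\le g(q(e))\le g(n)$. This is exactly the claimed bound: $(n-1)/2$ when $n\equiv1\pmod4$ and $\lfloor (n-2)/2\rfloor$ otherwise.

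\textbf{Attainment by $\mathcal H_4(1,n)$.} Write $n=4k+l+1$. The edges of $\mathcal H_4(1,n)$ are the pairs $a_1b_{4i+1}b_{4i+2}b_{4i+3}$ and $a_1b_{4i+2}b_{4i+3}b_{4i+4}$ for $i\in[0,k-1]$, plus the extra edge $a_1b_{4k+1}b_{4k+2}b_{4k+3}$ when $4\mid n$ (then $l=3$). I will verify three things.
\begin{itemize}
\item \emph{Matching number.} All edges contain $a_1$, so $\nu=1$.
\item \emph{Berge-triangle-freeness.} Every pair of edges meets in $a_1$ only, except the two edges of the same block $i$, which share $\{a_1,b_{4i+2},b_{4i+3}\}$. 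A Berge triangle $xyz$ with three distinct edges $f_{xy},f_{yz},f_{xz}$ would need two vertices in each pairwise intersection. Three distinct edges cannot all lie in one block, since a block has only two edges. So one of the three pairwise intersections is $\{a_1\}$, which is too small to host two vertices of the triangle. Hence no Berge triangle exists.
\item \emph{Edge count.} The count is $2k$, or $2k+1$ when $4\mid n$. I will check this against $g(n)$ residue by residue:
\begin{itemize}
\item $n=4k+1$: the bound is $(n-1)/2=2k$, matching $2k$ edges.
\item $n=4k+2$ or $4k+3$: the bound is $\lfloor (n-2)/2\rfloor=2k$, matching $2k$ edges.
\item $n=4k+4$: the bound is $\lfloor (n-2)/2\rfloor=2k+1$, matching $2k+1$ edges.
\end{itemize}
\end{itemize}

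The main step, that $E(\mathcal H)=\overline X(e)$ together with $q(e)\le n$, is immediate. The only delicate points are the monotonicity of $g$ across the residue $1\pmod 4$ and confirming that the construction's count matches $g(n)$ in every residue class.
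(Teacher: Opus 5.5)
Your upper-bound argument is the paper's own. From $\nu(\mathcal H)\le 1$ you get $E(\mathcal H)=\overline X_{\mathcal H}(e)$, and Lemma~\ref{triangle_free_edge_link_4_uniform_version} with $q(e)\le n$ finishes. Your explicit check that the bound $g(q)$ is nondecreasing in $q$ is a step the paper leaves implicit, and it is correct. Your edge counts for $\mathcal H_4(1,n)$ in the four residue classes are also correct.

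The one flawed step is your proof that $\mathcal H_4(1,n)$ is Berge-triangle-free. You claim a Berge triangle needs two vertices in each pairwise intersection of its edges, and that is false. For core $xyz$, the intersection $f_{xy}\cap f_{xz}$ only has to contain $x$. If $x=a_1$, that intersection may be exactly $\{a_1\}$.

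So finding one pairwise intersection equal to $\{a_1\}$ rules nothing out. For instance, the edges $\{a_1,u_1,u_2,u_3\}$, $\{a_1,u_1,u_4,u_5\}$, $\{a_1,u_2,u_6,u_7\}$ form a Berge triangle on $a_1u_1u_2$. Yet the last two of them meet only in $a_1$.

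The correct argument counts intersections:
\begin{itemize}
\item The three pairwise intersections must contain the three distinct core vertices, one each.
\item An intersection equal to $\{a_1\}$ forces its core vertex to be $a_1$. Hence at most one of the three intersections can equal $\{a_1\}$.
\item In $\mathcal H_4(1,n)$, any three distinct edges include at most two from a common block. So at least two of their pairwise intersections are exactly $\{a_1\}$, a contradiction.
\end{itemize}
With this repair your proof is complete. The paper itself merely asserts triangle-freeness of the construction.
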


\begin{proof}
If $E(\mathcal H)=\emptyset$, then the result is trivial. Otherwise choose an edge $e\in E(\mathcal H)$. Since $\nu(\mathcal H)\le1$, every edge of $\mathcal H$ intersects $e$, and hence
$$
E(\mathcal H)=\overline X_{\mathcal H}(e).
$$
The desired upper bound follows from Lemma~\ref{triangle_free_edge_link_4_uniform_version} and $q(e)\le n$. The construction $\mathcal H_4(1,n)$ has the claimed number of edges, is Berge-triangle-free, and has matching number one.
\end{proof}

\begin{claim}\label{no 2 n-1/2 edges}
If $v(\mathcal H)\equiv1\pmod4$ and $\nu(\mathcal H)\ge2$, then there exists an edge $e\in E(\mathcal H)$ such that
$$
|\overline X_{\mathcal H}(e)|\le \frac{v(\mathcal H)-3}{2}.
$$
\end{claim}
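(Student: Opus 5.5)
Write $n=v(\mathcal H)\equiv 1\pmod 4$. Since $n$ is odd, $\lfloor (n-2)/2\rfloor=(n-3)/2$. By Lemma~\ref{triangle_free_edge_link_4_uniform_version}, every edge $e$ satisfies $|\overline X_{\mathcal H}(e)|\le (q(e)-1)/2\le (n-1)/2$. Moreover, $|\overline X_{\mathcal H}(e)|=(n-1)/2$ forces two things: $q(e)=n$ (if $q(e)<n$, then either $q(e)\le n-4$ or $q(e)\not\equiv 1$, and both give at most $(n-3)/2$), and the rigid equality structure of that lemma. So we argue by contradiction. Suppose every edge $e$ has $|\overline X_{\mathcal H}(e)|=(n-1)/2$. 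Then for every $e$:
\begin{itemize}
\item $\bigcup \overline X(e)=V(\mathcal H)$;
\item $X^3(e)$ is a single edge $f_e$, and $X^2(e)=\emptyset$;
\item every other edge of $X(e)$ lies in $X^{1,3}(e)$, coming in pairs forming copies of $T^{(3)}_{2,2}$.
\end{itemize}
Thus the pairs are edges $g,g'$ with $g\cap e=g'\cap e=\{v\}$ and $|g\cap g'|=3$, i.e.\ $g=\{v,a,b,c\}$ and $g'=\{v,a,b,d\}$.

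The plan is to apply this structure to a cleverly chosen edge and derive a contradiction with $\nu(\mathcal H)\ge 2$ or with Berge-triangle-freeness. Since $\nu\ge 2$, pick two disjoint edges $e_1,e_2$. Because $e_2$ meets $\bigcup\overline X(e_1)$ but is disjoint from $e_1$, it must lie in the link part. Every vertex of $L(e_1)$ is either the isolated vertex (coming from $f_{e_1}$) or lies in a $T_{2,2}$ component. Now apply the equality structure to a pair edge $g=\{v,a,b,c\}\in X^{1,3}(e_1)$ with partner $g'=\{v,a,b,d\}$. We have $|g\cap g'|=3$, so $g'\in X^3(g)$, hence $f_g=g'$. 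Also $e_1\in X^1(g)$, so $e_1$ must be paired in $X^{1,3}(g)$ with another edge $h$ satisfying $|h\cap e_1|=3$ and $h\cap g=\{v\}$. By Claim~\ref{1st small claim}/\ref{3rd small claim} applied at $e_1$, $h$ must be $f_{e_1}$, so $f_{e_1}\ni v$. The same argument applied to another pair component (at a vertex $v'\ne v$ of $e_1$, or to the same vertex) forces $f_{e_1}\cap e_1$ to contain every such $v$.

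The next step is a counting/parity argument. Each $T_{2,2}$ attached at $v$ forces $v\in f_{e_1}$, and $f_{e_1}\cap e_1$ has size exactly $3$, leaving one vertex $w\in e_1\setminus f_{e_1}$ with no $T_{2,2}$ component. Now consider $f=f_{e_1}$ itself: $e_1\in X^3(f)$, so $f_f=e_1$, and every other edge meeting $f$ must be a $T_{2,2}$ pair at a single vertex of $f$. The pair edges $g,g'$ meet $f$ only in $v$. I then plan to use the pair $\{g,g'\}$ together with $f,e_1$: the edge $g$ meets $f$ at $v$ and meets $g'$ in three vertices, which gives the needed structure. To reach a contradiction, I would exhibit either a Berge triangle or show that some vertex is uncovered, namely that $q(g)<n$. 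Indeed, $\overline X(g)$ consists of $g,g',e_1,f$ and pairs at $a,b,c$. But pairs at $a$ or $b$ meet $g'$ outside $g$? Not necessarily, so I would check via Claim~\ref{1st small claim} with $g'\in X^3(g)$: any $X^1(g)$ edge intersecting $g'\setminus g=\{d\}$ is forbidden. Hence $d$ is covered only by $g'$, which is fine. Instead I would aim to show that $e_2$'s vertices cannot all be covered by $\overline X(g)$, contradicting $q(g)=n$.

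I expect this final step to be the main obstacle: pinning down a concrete vertex outside $\bigcup\overline X(e)$ for some edge $e$. The most promising choice is to take $e=f_{e_1}$ or a pair edge $g$, and to show that the vertices of a $T_{2,2}$ component at $e_1$ other than the one containing $g$ cannot be reached from $g$ without creating a Berge triangle through $e_1$. If all of $e_1$'s link components sit at a single vertex $v$, then a second component $\{v,a',b',c'\},\{v,a',b',d'\}$ meets $g$ only at $v$. Two such edges, both meeting $g$ at $v$, must form a pair in $X^{1,3}(g)$, and the pairing at $v$ in $g$ already contains $\{e_1,f_{e_1}\}$ together with possibly more pairs. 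I would then check this against Claim~\ref{2nd small claim} (edges of $X^1(g)$ sharing a vertex outside $g$) to obtain the contradiction, falling back on a direct Berge-triangle search among $e_1,f_{e_1},g$ and the second component if needed.
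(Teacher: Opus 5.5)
Your reduction and your intermediate step are correct. If every edge attains $(n-1)/2$, apply the equality structure of Lemma~\ref{triangle_free_edge_link_4_uniform_version} to a pair edge $g\in X^{1,3}_{\mathcal H}(e_1)$. This gives $f_g=g'$, hence $e_1\in X^{1,3}_{\mathcal H}(g)$. The partner of $e_1$ there meets $e_1$ in three vertices, so it is $f_{e_1}$ and contains $v$. Thus every $T^{(3)}_{2,2}$-component of $L_{\mathcal H}(e_1)$ is attached at one of the three vertices of $f_{e_1}\cap e_1$.

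The gap is that the proposal ends there. What follows is a list of things you would try: describing $\overline X_{\mathcal H}(g)$, hoping to show $q(g)<n$, and the case of two components at one vertex. None of them yields a contradiction. Two components attached at the same vertex of $e_1$ is not contradictory in itself. Also, by your own observation applied to $g$, the pairs of $L_{\mathcal H}(g)$ sit at $v,a,b$, not at $a,b,c$. As written, no contradiction is ever reached.

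The missing idea is the one the paper uses: look at how the edge $e_2$, disjoint from $e_1$, sits inside $L_{\mathcal H}(e_1)$.
\begin{itemize}
\item Since $q(e_1)=n$, all four vertices of $e_2$ lie in $V(L_{\mathcal H}(e_1))$.
\item No two of them lie in the same $T^{(3)}_{2,2}$-component, or in two components attached at the same vertex $v\in e_1$. Otherwise two distinct component edges through $v$, together with $e_2$, form a Berge triangle.
\item $e_2$ cannot contain the isolated vertex $u\in f_{e_1}\setminus e_1$. Any other $p\in e_2$ lies in a component attached at some $v$, which by your observation lies in $f_{e_1}$. Then $f_{e_1}\supseteq\{u,v\}$, the component edge through $v,p$, and $e_2\supseteq\{u,p\}$ form a Berge triangle on $\{u,v,p\}$.
\end{itemize}
Hence the four vertices of $e_2$ need four distinct attachment vertices, but only the three vertices of $f_{e_1}\cap e_1$ are available.

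Adding this pigeonhole step completes your argument, and the result is shorter than the paper's. The paper assumes equality only at $e_1$ and $e_2$, so it lacks your three-vertex restriction. It must instead prove $f_i\cap e_j=\emptyset$, obtain the bijective attachment in both directions, and then locate $f_1$ inside $L_{\mathcal H}(e_2)$, a final step it only sketches. Your stronger hypothesis (equality also at the auxiliary edge $g$) is legitimate, since the claim only asserts that some edge is small.
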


\begin{proof}
Let $n=v(\mathcal H)$, and choose two disjoint edges $e_1,e_2\in E(\mathcal H)$. Suppose, for a contradiction, that
$$
|\overline X_{\mathcal H}(e_1)|=|\overline X_{\mathcal H}(e_2)|=\frac{n-1}{2}.
$$
By Lemma~\ref{triangle_free_edge_link_4_uniform_version}, for each $i\in\{1,2\}$, the link $L_{\mathcal H}(e_i)$ consists of one isolated vertex and a vertex-disjoint union of copies of $T_{2,2}^{(3)}$. Let $f_i$ be the unique edge in $X_{\mathcal H}^3(e_i)$.

We first claim that $f_i\cap e_j=\emptyset$ whenever $\{i,j\}=\{1,2\}$. Indeed, suppose without loss of generality that $f_1\cap e_2\ne\emptyset$. Since $e_1\cap e_2=\emptyset$ and $|f_1\cap e_1|\ge3$, we have $|f_1\cap e_2|=1$. Write $f_1\cap e_2=\{x\}$. Since $f_1$ is not the unique member of $X_{\mathcal H}^3(e_2)$, it lies in a $T_{2,2}^{(3)}$-component of $L_{\mathcal H}(e_2)$. Hence there exists an edge $g\in X_{\mathcal H}^{1,3}(e_2)$ such that
$$
|f_1\cap g|=3
$$
and $x\in f_1\cap g$. Since $f_1$ contains three vertices of $e_1$, the set $(f_1\cap g)\setminus\{x\}$ contains two vertices of $e_1$, say $y$ and $z$. Then $e_1,f_1,g$ form a Berge triangle on the shadow triangle with vertices $x,y,z$, a contradiction. Thus $f_i\cap e_j=\emptyset$.

Since $q(e_1)=n$, every vertex of $e_2$ lies in $V(L_{\mathcal H}(e_1))$. The isolated component corresponding to $f_1$ is disjoint from $e_2$, so every vertex of $e_2$ lies in a $T_{2,2}^{(3)}$-component of $L_{\mathcal H}(e_1)$. If two vertices of $e_2$ lie in the same such component, then the two hyperedges corresponding to that component, together with $e_2$, form a Berge triangle. Hence the four vertices of $e_2$ lie in four distinct $T_{2,2}^{(3)}$-components of $L_{\mathcal H}(e_1)$. Moreover, two of these components cannot be attached to the same vertex of $e_1$, for otherwise the corresponding two hyperedges and $e_2$ form a Berge triangle. Thus the four components containing the vertices of $e_2$ are attached to the four distinct vertices of $e_1$.

By symmetry, the same conclusion holds with $e_1$ and $e_2$ interchanged. Now consider the edge $f_1$. It contains three vertices of $e_1$ and one vertex outside $e_1\cup e_2$. Since $q(e_2)=n$, the vertex outside $e_1\cup e_2$ lies in $V(L_{\mathcal H}(e_2))$. Combining this with the preceding bijective attachment between $e_1$ and the $T_{2,2}^{(3)}$-components of $L_{\mathcal H}(e_2)$, we find two distinct hyperedges meeting $e_2$ whose intersections with $f_1$ supply two different vertices of $f_1\cap e_1$ and the outside vertex of $f_1$. Together with $f_1$, these two hyperedges form a Berge triangle, a contradiction. Therefore at least one of $e_1,e_2$ satisfies
$$
|\overline X_{\mathcal H}(e_i)|\le \frac{n-3}{2}.
$$
This proves the claim.
\end{proof}

\begin{lemma}\label{two_matching_4_uniform}
Let $\mathcal H$ be a Berge-triangle-free $4$-graph on $n$ vertices with $n\ge8$ and $\nu(\mathcal H)\le2$. Then
$$
e(\mathcal H)\le 2\left\lfloor\frac{n-4}{2}\right\rfloor.
$$
\end{lemma}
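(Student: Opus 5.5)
If $\nu(\mathcal H)\le1$, Lemma~\ref{one_matching_4_uniform} gives $e(\mathcal H)\le (n-1)/2\le 2\lfloor (n-4)/2\rfloor$ for $n\ge 8$. So the plan is to assume $\nu(\mathcal H)=2$ and argue by induction on $n$, deleting an edge together with all edges meeting it.

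Choose a maximum matching $\{e_1,e_2\}$. For $e\in\{e_1,e_2\}$, every edge of $\mathcal H$ either lies in $\overline X_{\mathcal H}(e)$ or is disjoint from $e$. The edges disjoint from $e$ form a Berge-triangle-free $4$-graph $\mathcal H'$. It lives on the $n-4$ vertices outside $e$ and has $\nu(\mathcal H')\le1$, since a $2$-matching there together with $e$ would give a $3$-matching. Moreover, $\mathcal H'$ in fact uses only the vertices outside $\bigcup_{h\in\overline X(e)}h$ together with those vertices of $L_{\mathcal H}(e)$ it shares; to keep things simple I would just use $v(\mathcal H')\le n-4$. Lemma~\ref{one_matching_4_uniform} then gives $e(\mathcal H')\le (n-5)/2$, and Lemma~\ref{triangle_free_edge_link_4_uniform_version} with $q(e)\le n$ gives $|\overline X(e)|\le (n-1)/2$. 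Summing these two bounds gives roughly $n-3$, which is slightly too big compared with the target $2\lfloor (n-4)/2\rfloor$ (that is, $n-4$ or $n-5$). So the crude split loses about $1$–$2$ edges, and the real work is recovering this loss through parity.

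To recover it, I would do a case analysis on $n \bmod 4$.

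\emph{Case $n\equiv1\pmod4$.} Claim~\ref{no 2 n-1/2 edges} lets me pick $e\in\{e_1,e_2\}$ with $|\overline X(e)|\le (n-3)/2$. Then $\mathcal H'$ has at most $n-4\equiv1\pmod 4$ vertices, so $e(\mathcal H')\le (n-5)/2$. The total is at most $n-4$, which equals $2\lfloor (n-4)/2\rfloor$ since $n$ is odd gives $2\cdot (n-5)/2=n-5$. This is still off by one, so I need an extra saving, namely that $\mathcal H'$ and $\overline X(e)$ cannot both be extremal. The extremal structure for $\overline X(e)$ forces $L(e)$ to cover all other vertices with $T_{2,2}$-components. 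Any edge of $\mathcal H'$ meets such a component, and by Claim~\ref{1st small claim}/\ref{2nd small claim}-type arguments this should create a Berge triangle unless the edge is absorbed.

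\emph{Other residues.} Here Lemma~\ref{triangle_free_edge_link_4_uniform_version} gives $|\overline X(e)|\le\lfloor (n-2)/2\rfloor$ and Lemma~\ref{one_matching_4_uniform} gives $e(\mathcal H')\le \lfloor (n-6)/2\rfloor$ or $(n-5)/2$. I would compare the sum with $2\lfloor (n-4)/2\rfloor$ residue by residue, again needing to shave off an additive $1$ in the tight residues.

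The key extra ingredient I would prove is that vertices of $V(\mathcal H')$ lying in $V(L_{\mathcal H}(e))$ are constrained. An edge $g\in\mathcal H'$ containing two vertices of the same component of $L(e)$ yields a Berge triangle with the two corresponding edges of $X(e)$. It also cannot contain vertices attached to the same vertex of $e$ in different components. So each edge of $\mathcal H'$ "spends" vertices that would otherwise be counted in $q(e)$, and I would make this a double-counting inequality of the form $|\overline X(e)|+e(\mathcal H')\le (n-c)/2\cdot 2$.

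This refined interplay is the main obstacle. The naive sum overshoots by a constant, and closing that gap requires a careful structural argument in the near-extremal case, similar to the proof of Claim~\ref{no 2 n-1/2 edges}. I would first try applying the argument symmetrically to both $e_1$ and $e_2$, averaging the two decompositions to gain the missing unit.
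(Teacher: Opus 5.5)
Your decomposition $E(\mathcal H)=\overline X_{\mathcal H}(e)\sqcup E(\mathcal H')$ with $\mathcal H'=\mathcal H[V(\mathcal H)\setminus e]$ is the same as the paper's, and so is your use of Claim~\ref{no 2 n-1/2 edges} when $n\equiv1\pmod4$. But the proposal stops exactly where the proof has to happen.

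A minor point first. For $n\not\equiv1\pmod4$ nothing needs to be shaved. Since $n-4\not\equiv1\pmod4$, Lemma~\ref{one_matching_4_uniform} gives $e(\mathcal H')\le\lfloor (n-6)/2\rfloor$, and $\lfloor (n-2)/2\rfloor+\lfloor (n-6)/2\rfloor=2\lfloor (n-4)/2\rfloor$ for both parities.

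The genuine gap is the case $n\equiv1\pmod4$. There you only get $(n-3)/2+(n-5)/2=n-4$ against the target $n-5$, and the missing unit is never proved. The mechanism you sketch also points at the wrong object. The chosen $e$ already satisfies $|\overline X_{\mathcal H}(e)|\le (n-3)/2$, so the rigid configuration from Lemma~\ref{triangle_free_edge_link_4_uniform_version} (one isolated vertex plus copies of $T_{2,2}^{(3)}$) is precisely the case you have excluded. At the value $(n-3)/2$, the link $L_{\mathcal H}(e)$ may contain $2$-edges, stars $S_p^{(3)}$, or uncovered vertices. Your ``key ingredient'' also fails for $2$-edge components: an edge of $\mathcal H'$ can contain both vertices of a component coming from $X^2_{\mathcal H}(e)$ without any Berge triangle. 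This already happens in $\mathcal H_4(2,n)$. So the double count you envisage cannot rest on it as stated.

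The paper's lever is the extremality of $\mathcal H'$, not of $L_{\mathcal H}(e)$. If $e(\mathcal H')\le (n-7)/2$ you are done. If $e(\mathcal H')=(n-5)/2$ on $n-4\equiv1\pmod4$ vertices, then $\mathcal H'$ is in the exceptional one-matching configuration, and this is used to push $|\overline X_{\mathcal H}(e)|$ down to $(n-5)/2$.

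One way to make this concrete:
\begin{enumerate}
\item For any $f\in E(\mathcal H')$, the set $E(\mathcal H')=\overline X_{\mathcal H'}(f)$ is in the equality case of Lemma~\ref{triangle_free_edge_link_4_uniform_version}.
\item Since $\nu(\mathcal H')\le1$, all $T_{2,2}^{(3)}$-pairs of $L_{\mathcal H'}(f)$ hang at one vertex $x\in f$, and $x$ also lies in the unique edge of $X^3_{\mathcal H'}(f)$.
\item So $\mathcal H'$ is a star at $x$ whose link splits $V(\mathcal H)\setminus(e\cup\{x\})$ into $4$-sets, each carrying two triples that share two vertices.
\item Any edge $h$ meeting $e$ that contains two vertices of $V(\mathcal H)\setminus e$ closes a Berge triangle through $x$ with two star edges. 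This covers two vertices from one block, $x$ together with a block vertex, and vertices from two different blocks.
\item Hence every $h\in X_{\mathcal H}(e)$ meets $e$ in three vertices. Two such edges would form a Berge triangle with $e$, so $|X_{\mathcal H}(e)|\le1$.
\end{enumerate}
Then $e(\mathcal H)\le 2+(n-5)/2\le n-5$ for $n\ge9$.
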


\begin{proof}
If $\nu(\mathcal H)\le1$, then the result follows from Lemma~\ref{one_matching_4_uniform}. Hence assume $\nu(\mathcal H)=2$.

If $n\not\equiv1\pmod4$, choose an arbitrary edge $e\in E(\mathcal H)$. By Lemma~\ref{triangle_free_edge_link_4_uniform_version},
$$
|\overline X_{\mathcal H}(e)|\le \left\lfloor\frac{n-2}{2}\right\rfloor.
$$
Let
$$
\mathcal H'=\mathcal H[V(\mathcal H)\setminus e].
$$
Then $\nu(\mathcal H')\le1$ and $v(\mathcal H')=n-4$. By Lemma~\ref{one_matching_4_uniform}, we obtain
$$
e(\mathcal H')\le
\begin{cases}
\dfrac{n-5}{2}, & n\equiv1\pmod4,\\[6pt]
\left\lfloor\dfrac{n-6}{2}\right\rfloor, & n\not\equiv1\pmod4.
\end{cases}
$$
Since $n\not\equiv1\pmod4$, a direct check of the two remaining congruence classes gives
$$
|\overline X_{\mathcal H}(e)|+e(\mathcal H')\le 2\left\lfloor\frac{n-4}{2}\right\rfloor.
$$

If $n\equiv1\pmod4$, choose $e$ as in Claim~\ref{no 2 n-1/2 edges}. Then
$$
|\overline X_{\mathcal H}(e)|\le \frac{n-3}{2}.
$$
Again $\nu(\mathcal H')\le1$ and $v(\mathcal H')=n-4\equiv1\pmod4$. Lemma~\ref{one_matching_4_uniform} gives
$$
e(\mathcal H')\le\frac{n-5}{2}.
$$
If equality held here, then $\mathcal H'$ would be in the exceptional one-matching case. Since $e$ is disjoint from an edge of $\mathcal H'$ and $\mathcal H$ is Berge-triangle-free, the same argument as in Claim~\ref{no 2 n-1/2 edges} improves the first bound to
$$
|\overline X_{\mathcal H}(e)|\le\frac{n-5}{2}.
$$
Therefore
$$
e(\mathcal H)\le\frac{n-5}{2}+\frac{n-5}{2}=n-5=2\left\lfloor\frac{n-4}{2}\right\rfloor.
$$
This completes the proof.
\end{proof}

\begin{theorem}\label{main for 4-uniform}
Let $s$ and $n$ be positive integers with $n\ge 4s$. Let $\mathcal{H}$ be a triangle-free $4$-graph with $n$ vertices and $\nu(\mathcal{H})\leq s$. Then $|E(\mathcal{H})|\leq s\lfloor\frac{n-2s}{2}\rfloor$ unless $s=1$ and $4\mid (n-1)$, in which case $|E(\mathcal{H})|\leq \frac{n-1}{2}$.

Equality holds if $\mathcal{H}=\mathcal{H}_4(s,n)$. Equality holds if and only if $\mathcal{H}=\mathcal{H}_4(s,n)$ when $\nu(\mathcal{H})>2$ and $2\mid n$.
\end{theorem}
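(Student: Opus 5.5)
We argue by induction on $s$, peeling off one edge together with its neighbourhood at each step, in the same way as in Lemma~\ref{two_matching_4_uniform}. The cases $s=1$ and $s=2$ are already settled by Lemmas~\ref{one_matching_4_uniform} and~\ref{two_matching_4_uniform}. The target bound $s\lfloor (n-2s)/2\rfloor$ is $2$ for $s=1,n=6$, and it matches $\lfloor (n-2)/2\rfloor$ for $s=1$ outside the exceptional residue. For $s\ge 3$ and $n\ge 4s$, we fix a maximum matching and an edge $e$, and split
\[
e(\mathcal H)=|\overline X_{\mathcal H}(e)|+e(\mathcal H'),\qquad \mathcal H'=\mathcal H[V(\mathcal H)\setminus e].
\]
Here $\nu(\mathcal H')\le s-1$, $v(\mathcal H')=n-4\ge 4(s-1)$, and $\mathcal H'$ is Berge-triangle-free, so induction gives $e(\mathcal H')\le (s-1)\lfloor (n-2s-2)/2\rfloor$. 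The exception for $s-1=1$ only arises in the step $s=2$, which is already handled. We therefore need
\[
|\overline X_{\mathcal H}(e)|\le s\left\lfloor\tfrac{n-2s}{2}\right\rfloor-(s-1)\left\lfloor\tfrac{n-2s-2}{2}\right\rfloor=\left\lfloor\tfrac{n-2s}{2}\right\rfloor+(s-1).
\]
Lemma~\ref{triangle_free_edge_link_4_uniform_version} only gives $|\overline X(e)|\lesssim (n-1)/2$, which is too weak for a single arbitrary edge. So the real work is choosing $e$ well.

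To choose $e$, we average over the $s$ edges $e_1,\dots,e_s$ of a maximum matching $M$ instead of bounding one edge in isolation. Every edge of $\mathcal H$ meets $V(M)$. We define $G(\mathcal H,M)$ as in the $3$-uniform subsection, with vertex classes $V(\mathcal H)\setminus V(M)$ and $E(M)$. For each $m\in M$, Lemma~\ref{triangle_free_edge_link} shows that the link $L_{\mathcal H}(m)$ restricted to outside vertices is a disjoint union of $2$-edges, stars $S_p^{(3)}$ and copies of $T_{2,2}^{(3)}$, each contributing at most one edge per two outside vertices. Summing over $m\in M$ bounds the edges meeting $V(M)$ in one or two vertices by roughly $s(n-4s)/2$, once we check that an outside vertex is charged to distinct matching edges in a controlled way (the analogue of case ii in Lemma~\ref{in_and_out_M_s^3}). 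The edges inside $V(M)$, and those meeting it in $\ge 3$ vertices, must contribute at most about $s^2$ more. This gives $s(n-4s)/2+s^2=s(n-2s)/2$, the right total. Concretely, we would prove two inequalities:
\[
e(\mathcal H)-e(\mathcal H[V(M)])\le s\left\lfloor\tfrac{n-4s}{2}\right\rfloor+\epsilon,\qquad e(\mathcal H[V(M)])\le s^2,
\]
where $\epsilon$ accounts for parity slack. The second inequality comes from repeating the degree-averaging argument of Theorem~\ref{main for 3-uniform}: for each $m_i$, Claim~\ref{3_properties_for_B} limits how edges inside $V(M)$ can meet $m_i$, giving $\sum_{v\in m_i} d_{\mathcal H[V(M)]}(v)\le 4s$ in the appropriate weighted sense.

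The main obstacle is the parity/floor bookkeeping in the first inequality. An odd number of outside vertices, or an isolated vertex in some link (an edge of $X^3(m)$), can give an extra half-edge per matching edge. This is exactly the phenomenon behind the exceptional case $s=1$, $n\equiv 1\pmod 4$. To rule it out for $s\ge2$, we would adapt the argument of Claim~\ref{no 2 n-1/2 edges}. For two matching edges $m_i,m_j$ whose links both achieve the extremal "isolated vertex plus $T_{2,2}^{(3)}$'' structure, we derive a Berge triangle. Hence at most one matching edge can carry the extra unit, and it is absorbed by the slack in the $s^2$ bound. For sharpness, $\mathcal H_4(s,n)$ is Berge-triangle-free with $\nu=s$: its edges are $v_{2i-1}v_{2i}$ times $u_{2j-1}u_{2j}$, and any three pairwise intersecting edges share a common pair, so no triangle forms. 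It has $s\lfloor (n-2s)/2\rfloor$ edges.

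For uniqueness when $\nu>2$ and $n$ is even, we trace equality. There is no slack, so each link $L_{\mathcal H}(m_i)$ outside $V(M)$ is a perfect matching of $2$-edges, all attached to the same pair of $m_i$ by Claim~\ref{3_properties_for_B}. Berge-freeness across different $m_i$ then forces the same pairing of outside vertices and forces $\mathcal H[V(M)]$ to match the construction.
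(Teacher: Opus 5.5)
\textbf{There is a genuine gap: the step you flag as the real work is already closed by the paper's lemmas, and the global count you put in its place is unproven and, as stated, false.}

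Your first paragraph sets up exactly the paper's induction. You then misjudge the one remaining step. The quantity you need is $\lfloor\frac{n-2s}{2}\rfloor+(s-1)=\lfloor\frac{n-2}{2}\rfloor$. Lemma~\ref{triangle_free_edge_link_4_uniform_version}, together with $q(e)\le n$, already gives $|\overline X_{\mathcal H}(e)|\le\lfloor\frac{n-2}{2}\rfloor$ for \emph{every} edge, except when $q(e)=n\equiv 1\pmod 4$. (If $q(e)\equiv 1\pmod 4$ but $q(e)\le n-1$, then the integer $\frac{q(e)-1}{2}$ is at most $\lfloor\frac{n-2}{2}\rfloor$.) The case $\nu(\mathcal H)\le s-1$ goes by induction, since $(s-1)\lfloor\frac{n-2s+2}{2}\rfloor\le s\lfloor\frac{n-2s}{2}\rfloor$ for $n\ge 4s$. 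So you may assume $\nu(\mathcal H)=s\ge 2$. In the single bad case, Claim~\ref{no 2 n-1/2 edges} then supplies an edge with $|\overline X_{\mathcal H}(e)|\le\frac{n-3}{2}=\lfloor\frac{n-2}{2}\rfloor$. That is the paper's entire proof of the bound; no averaging over a matching is needed. Your sharpness check for $\mathcal H_4(s,n)$ is fine.

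The substitute global count has real gaps.
\begin{enumerate}[label=(\roman*)]
\item The two displayed inequalities are asserted, not derived.
\item Your informal and concrete versions put the edges meeting $V(M)$ in exactly three vertices on opposite sides.
\item Restricting $L_{\mathcal H}(m)$ to outside vertices does not preserve the one-edge-per-two-vertices count, because an edge may have a single outside vertex while meeting two matching edges.
\end{enumerate}

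The parity mechanism also fails for an arbitrary maximum matching. In Remark~\ref{odd_extremal_non_unique_4_uniform} take $s=3$, $n=15$, $t=4$, $J=\{1\}$, $K=\{2,3,4\}$ and $M=\{\{x,y\}\cup B_1,\,A_2\cup B_2,\,A_3\cup B_3\}$. Then $e(\mathcal G[V(M)])=7=s^2-2$. Also $e(\mathcal G)-e(\mathcal G[V(M)])=5=s\lfloor\frac{n-4s}{2}\rfloor+2$: the edges $\{x,z\}\cup B_2$ and $\{x,z\}\cup B_3$ both have $z$ as their only outside vertex. So the excess is two units, not at most one, and it is offset only by a deficit inside $V(M)$.

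Hence the two bounds cannot be proved separately with $\epsilon\le 1$. They must be coupled, or $M$ chosen specially, and Claim~\ref{no 2 n-1/2 edges}, which concerns edges with $q(e)=n$, does not do this. Moreover, $e(\mathcal H[V(M)])\le s^2$ is the case $n=4s$ of the theorem itself, so \emph{repeating the degree averaging in a weighted sense} needs an actual argument.

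Your uniqueness paragraph names no mechanism. The paper argues as follows.
\begin{enumerate}[label=(\roman*)]
\item For even $n$ any edge can be peeled, so equality holds in Lemma~\ref{triangle_free_edge_link_4_uniform_version} for every edge.
\item With $\nu(\mathcal H)>2$ this forces $X_{\mathcal H}(e)=X^2_{\mathcal H}(e)$ for all $e$, so the vertices split into pairs.
\item Contracting the pairs gives a triangle-free graph on $n/2$ vertices with matching number at most $s$ and $s(\frac n2-s)$ edges.
\item Theorem~\ref{AlonFrankl} (Mantel's theorem when $n=4s$) identifies this graph as $K_{s,n/2-s}$.
\end{enumerate}
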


\begin{proof}
The case $s=1$ follows from Lemma~\ref{one_matching_4_uniform}, and the case $s=2$ follows from Lemma~\ref{two_matching_4_uniform}. Now assume $s\ge3$ and proceed by induction on $s$.

If $\nu(\mathcal H)\le s-1$, then by the induction hypothesis,
$$
e(\mathcal H)\le (s-1)\left\lfloor\frac{n-2(s-1)}{2}\right\rfloor\le s\left\lfloor\frac{n-2s}{2}\right\rfloor,
$$
where the last inequality follows from $n\ge4s$. Hence we may assume $\nu(\mathcal H)=s$.

If $n\not\equiv1\pmod4$, choose an arbitrary edge $e\in E(\mathcal H)$. If $n\equiv1\pmod4$, choose $e$ as in Claim~\ref{no 2 n-1/2 edges}. In both cases,
$$
|\overline X_{\mathcal H}(e)|\le \left\lfloor\frac{n-2}{2}\right\rfloor.
$$
Let
$$
\mathcal H'=\mathcal H[V(\mathcal H)\setminus e].
$$
Then $\nu(\mathcal H')\le s-1$ and $v(\mathcal H')=n-4$. By the induction hypothesis,
$$
e(\mathcal H')\le (s-1)\left\lfloor\frac{n-4-2(s-1)}{2}\right\rfloor=(s-1)\left\lfloor\frac{n-2s-2}{2}\right\rfloor.
$$
Therefore
$$
e(\mathcal H)\le \left\lfloor\frac{n-2}{2}\right\rfloor+(s-1)\left\lfloor\frac{n-2s-2}{2}\right\rfloor.
$$
If $n$ is even, this gives
$$
e(\mathcal H)\le \frac{n-2}{2}+(s-1)\frac{n-2s-2}{2}=\frac{s(n-2s)}{2}=s\left\lfloor\frac{n-2s}{2}\right\rfloor.
$$
If $n$ is odd, this gives
$$
e(\mathcal H)\le \frac{n-3}{2}+(s-1)\frac{n-2s-3}{2}=\frac{s(n-2s-1)}{2}=s\left\lfloor\frac{n-2s}{2}\right\rfloor.
$$
This proves the upper bound.

It remains to check sharpness and the equality case when $\nu(\mathcal H)>2$ and $n$ is even. By Definition~\ref{extremal graph type}, $\mathcal H_4(s,n)$ has vertex set
$$
\{v_1,\ldots,v_{2s},u_1,\ldots,u_{n-2s}\}
$$
and edge set
$$
\left\{v_{2i-1}v_{2i}u_{2j-1}u_{2j}:i\in[s],\ j\in\left[\left\lfloor\frac{n-2s}{2}\right\rfloor\right]\right\}.
$$
Thus
$$
e(\mathcal H_4(s,n))=s\left\lfloor\frac{n-2s}{2}\right\rfloor.
$$
Its matching number is at most $s$, since every edge contains one of the $s$ fixed pairs $\{v_{2i-1},v_{2i}\}$. Moreover, contracting every pair $\{v_{2i-1},v_{2i}\}$ and every pair $\{u_{2j-1},u_{2j}\}$ gives a complete bipartite graph. Hence a Berge triangle in $\mathcal H_4(s,n)$ would give a triangle in this quotient graph, which is impossible. Therefore $\mathcal H_4(s,n)$ is Berge-triangle-free and attains the bound.

Now suppose $\nu(\mathcal H)>2$, $n$ is even, and
$$
e(\mathcal H)=\frac{s(n-2s)}{2}.
$$
Since the proof above works with an arbitrary edge when $n$ is even, equality must hold at every step for every $e\in E(\mathcal H)$. Hence
$$
|\overline X_{\mathcal H}(e)|=\frac{n-2}{2}
$$
and
$$
e(\mathcal H[V(\mathcal H)\setminus e])=\frac{(s-1)(n-2s-2)}{2}
$$
for every $e\in E(\mathcal H)$. In particular, if $e$ and $f$ are disjoint edges, then the number of edges of $\mathcal H$ intersecting both $e$ and $f$ is exactly
$$
|\overline X_{\mathcal H}(f)|-|\overline X_{\mathcal H[V(\mathcal H)\setminus e]}(f)|=\frac{n-2}{2}-\frac{n-6}{2}=2.
$$

We claim that for every edge $e\in E(\mathcal H)$,
$$
X_{\mathcal H}(e)=X_{\mathcal H}^2(e).
$$
Suppose first that $X_{\mathcal H}^3(e)\ne\emptyset$. By Lemma~\ref{triangle_free_edge_link_4_uniform_version} and equality, this can only happen when $q(e)=n-1$, $X_{\mathcal H}^3(e)$ consists of one edge, and all remaining edges meeting $e$ belong to $X_{\mathcal H}^{1,3}(e)$ in copies of $T_{2,2}^{(3)}$. Since $\nu(\mathcal H)>2$, the extremal hypergraph $\mathcal H[V(\mathcal H)\setminus e]$ has matching number at least two. Choose two disjoint edges in $\mathcal H[V(\mathcal H)\setminus e]$. Since all but one vertex outside $e$ lie in $V(L_{\mathcal H}(e))$, one of these two disjoint edges contains two vertices lying in the same $T_{2,2}^{(3)}$-component of $L_{\mathcal H}(e)$ or in two components attached to the same vertex of $e$. In either case, the two hyperedges corresponding to the relevant component or components, together with this edge, form a Berge triangle. This contradiction shows that $X_{\mathcal H}^3(e)=\emptyset.$

Now suppose $X_{\mathcal H}^{1,3}(e)\ne\emptyset$. Then there exist two edges $h_1,h_2\in X_{\mathcal H}^{1,3}(e)$ such that
$
h_1\cap e=h_2\cap e=\{x\}
$
and
$
|h_1\cap h_2|=3.
$
Since $X_{\mathcal H}^3(h_1)=\emptyset$, this is impossible, because $h_2$ meets $h_1$ in three vertices. Hence
$
X_{\mathcal H}^{1,3}(e)=\emptyset.
$
Together with the equality condition in Lemma~\ref{triangle_free_edge_link_4_uniform_version}, this excludes all components of $L_{\mathcal H}(e)$ except $2$-edges. Therefore every edge meeting $e$ meets it in exactly two vertices, and
$
X_{\mathcal H}(e)=X_{\mathcal H}^2(e).
$

We now define an equivalence relation on the non-isolated vertices of $\mathcal H$. For two vertices $x,y$, write $x\sim y$ if $x=y$ or if there exist two distinct hyperedges $e,f\in E(\mathcal H)$ such that
$
e\cap f=\{x,y\}.
$
Since $X_{\mathcal H}(e)=X_{\mathcal H}^2(e)$ for every $e$, Claim~\ref{equi relation claim} implies that $\sim$ is an equivalence relation. Moreover, each equivalence class has size exactly two. Indeed, if an equivalence class had size at least three, then three suitable hyperedges would form a Berge triangle. On the other hand, equality in the local bound implies that every non-isolated vertex lies in some intersection of two hyperedges, so no equivalence class has size one.

Thus the non-isolated vertices of $\mathcal H$ are partitioned into $2$-sets, and every hyperedge of $\mathcal H$ is the union of two such $2$-sets. Contract each equivalence class to one vertex and denote the resulting graph by $G$. Then $G$ is triangle-free, because a triangle in $G$ would lift to a Berge triangle in $\mathcal H$. Also
$
e(G)=e(\mathcal H)
$
and
$
\nu(G)=\nu(\mathcal H)\le s.
$
Since $n$ is even and equality leaves no isolated vertices, $G$ has exactly $n/2$ vertices. By Theorem~\ref{AlonFrankl} with $\ell=2$,
$$
e(G)\le s\left(\frac n2-s\right).
$$
But
$$
e(G)=e(\mathcal H)=\frac{s(n-2s)}{2}=s\left(\frac n2-s\right).
$$
Hence equality holds in the graph theorem. Therefore
$
G\cong K_{s,\frac n2-s}.
$
Undoing the contraction, $\mathcal H$ is obtained from $K_{s,\frac n2-s}$ by replacing every vertex by a $2$-set and every graph edge by the union of the corresponding two $2$-sets. Consequently
$$
\mathcal H\cong \mathcal H_4(s,n).
$$
This proves the equality statement for $\nu(\mathcal H)>2$ and even $n$.
\end{proof}

\begin{rmk}\label{odd_extremal_non_unique_4_uniform}
When $n$ is odd, the extremal graph is not unique in general. We give a family of extremal examples with no isolated vertices. Let $s\ge3$ and let $n\ge4s+1$ be odd. Put
$$
t=\frac{n-2s-1}{2}.
$$
Then $t\ge s$. Let
$$
V=\{x,y,z\}\sqcup\bigcup_{i=2}^{s}A_i\sqcup\bigcup_{j=1}^{t}B_j,
$$
where $|A_i|=2$ for $2\le i\le s$ and $|B_j|=2$ for $1\le j\le t$. Choose a partition
$$
[t]=J\sqcup K
$$
with $J,K\ne\emptyset$. Define a $4$-graph $\mathcal G=\mathcal G(J,K)$ on $V$ by
$$
E(\mathcal G)=
\left\{\{x,y\}\cup B_j:j\in J\right\}
\cup
\left\{\{x,z\}\cup B_j:j\in K\right\}
\cup
\left\{A_i\cup B_j:2\le i\le s,\ j\in[t]\right\}.
$$
Then
$$
v(\mathcal G)=3+2(s-1)+2t=n
$$
and
$$
e(\mathcal G)=t+(s-1)t=st=s\frac{n-2s-1}{2}=s\left\lfloor\frac{n-2s}{2}\right\rfloor.
$$
Moreover, $\nu(\mathcal G)\le s$, because a matching contains at most one edge from the two special families containing $x$, and at most one edge from each family $\{A_i\cup B_j:j\in[t]\}$ for $2\le i\le s$. Conversely, since $t\ge s$, one can choose one special edge and one edge from each of the $s-1$ ordinary families using pairwise distinct $B_j$'s, so $\nu(\mathcal G)=s$.

The hypergraph $\mathcal G$ is Berge-triangle-free. Indeed, the ordinary part $\{A_i\cup B_j:2\le i\le s,\ j\in[t]\}$ is the $2$-blow-up of a complete bipartite graph, and hence contains no Berge triangle. Two special edges of different types intersect only in the vertex $x$, while the sets of indices $J$ and $K$ are disjoint. A special edge can meet an ordinary edge only through the corresponding $B_j$, and therefore no three edges involving a special edge can realize three distinct shadow pairs of a triangle. Thus $\mathcal G$ is Berge-triangle-free. Since $J,K\ne\emptyset$, every vertex of $\mathcal G$ lies in some edge. Hence, for odd $n$, there exist extremal examples with no isolated vertices, while $\mathcal H_4(s,n)$ has one isolated vertex. Therefore the extremal graph is not unique when $n$ is odd.
\end{rmk}

Note that $\mathrm{ex}_4(n,\{\mathcal{B}(K_3),M_{s+1}^4\})$ is obtained by applying Theorem~\ref{main for 4-uniform} to all graphs with matching number at most $s$ and then taking the maximum. Thus we naturally have the following corollary.

\begin{cor}\label{r 4 turan K3 matching}
$$
\mathrm{ex}_4(n,\{\mathcal{B}(K_3),M_{s+1}^4\})=
\begin{cases}
\left\lfloor \dfrac{n}{4}\right\rfloor\left(\left\lfloor\dfrac{n}{2}\right\rfloor-\left\lfloor\dfrac{n}{4}\right\rfloor\right), & n\le 4s,\\[8pt]
\dfrac{n-1}{2}, & s=1\text{ and }4\mid n-1,\\[8pt]
s\left\lfloor\dfrac{n-2s}{2}\right\rfloor, & \text{otherwise}.
\end{cases}
$$
\end{cor}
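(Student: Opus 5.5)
We obtain the corollary by maximizing Theorem~\ref{main for 4-uniform} over all admissible matching numbers. Since $\mathrm{ex}_4(n,\{\mathcal{B}(K_3),M_{s+1}^4\})$ is the maximum of $e(\mathcal H)$ over Berge-triangle-free $4$-graphs with $\nu(\mathcal H)=t$ for some $t\le s$, we write
$$
\mathrm{ex}_4(n,\{\mathcal{B}(K_3),M_{s+1}^4\})=\max_{0\le t\le \min(s,\lfloor n/4\rfloor)} f(n,t),
$$
where $f(n,t)=s\lfloor (n-2t)/2\rfloor$ with $s$ replaced by $t$, except that $f(n,1)=(n-1)/2$ when $4\mid n-1$. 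Each $f(n,t)$ is attained by $\mathcal H_4(t,n)$, which has matching number exactly $t$. This gives the lower bound. For the upper bound we use that every admissible $\mathcal H$ has $\nu(\mathcal H)=t\le n/4$, so $n\ge 4t$ and the theorem applies.

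It remains to maximize over $t$. Set $N=\lfloor n/2\rfloor$, and note $\lfloor (n-2t)/2\rfloor=N-t$. The function $g(t)=t(N-t)$ is a concave quadratic in $t$ with maximum at $t=N/2$. Over integers, the maximum is attained at $t=\lfloor N/2\rfloor$. We check that $\lfloor N/2\rfloor=\lfloor n/4\rfloor$ and that this value is at most $n/4$, so it is always admissible. Two cases follow.

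\begin{itemize}
\item If $s\ge \lfloor n/4\rfloor$ (the regime $n\le 4s$ up to the boundary), the optimum is $t=\lfloor n/4\rfloor$. This gives $\lfloor n/4\rfloor(\lfloor n/2\rfloor-\lfloor n/4\rfloor)$.
\item If $s<n/4$, then $g$ is increasing on $[0,s]$, so the optimum is $t=s$. This gives $s\lfloor (n-2s)/2\rfloor$.
\end{itemize}

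At the boundary $n=4s$ the two formulas agree, so the case split in the statement is consistent there. We should also check the range $4s<n<4s+4$, where $\lfloor n/4\rfloor=s$; there both expressions coincide anyway.

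The main obstacle is the exceptional value $t=1$ when $4\mid n-1$, where $f(n,1)=(n-1)/2$ exceeds $g(1)=(n-3)/2$ by one. We must check whether this can beat the maximum of $g$.

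\begin{itemize}
\item When $s=1$, only $t\le 1$ is allowed, so the answer is $(n-1)/2$.
\item When $s\ge 2$ and $n\ge 9$, we have $g(2)=2\lfloor (n-4)/2\rfloor=n-5\ge (n-1)/2$, so the exception is dominated.
\item For small $n$ (namely $n=5$, where $\lfloor n/4\rfloor=1$), the first case $n\le 4s$ applies. We verify directly that $\lfloor 5/4\rfloor(\lfloor 5/2\rfloor-1)=1<2=(n-1)/2$.
\end{itemize}

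The last item shows a boundary subtlety: the cases of the stated formula must be read with the $s=1$, $4\mid n-1$ case taking precedence. I would check this ordering carefully, and handle tiny $n$ (with $n<4$, where all values are $0$) by hand.
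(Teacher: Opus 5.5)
Your approach is the paper's: apply Theorem~\ref{main for 4-uniform} to each possible matching number $t\le\min\{s,\lfloor n/4\rfloor\}$, use $\mathcal H_4(t,n)$ for the lower bound, and maximize $t(\lfloor n/2\rfloor-t)$ with the $t=1$ exception. The generic part of your computation is correct. The gap is your last step: your calculation at $n=5$ is not a ``precedence'' subtlety but a counterexample to the statement as written.

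For $n=5$, the first case $n\le 4s$ holds only when $s\ge 2$. The second case requires $s=1$, so no ordering of the cases can make it apply. Conversely, for $s=1$ we have $n=5>4s$, so the two cases never overlap and there is nothing to resolve. For $n=5$ and $s\ge 2$, every $4$-graph has $\nu\le 1\le s$. Two $4$-sets sharing three vertices, i.e.\ $\mathcal H_4(1,5)$, give a Berge-triangle-free example with $2$ edges, while the formula gives $\lfloor 5/4\rfloor(\lfloor 5/2\rfloor-\lfloor 5/4\rfloor)=1$. In fact $2$ is the true value, since any three $4$-subsets of a $5$-set form a Berge triangle on the three missing vertices. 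This agrees with your own maximization (range $t\in\{0,1\}$, $f(5,1)=2$) and with Corollary~\ref{r 4 turan K3}, which lists $n=5$ as an exception. The paper's one-line derivation has the same oversight.

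So the formula cannot be proved verbatim. What your computation actually shows is that the exceptional case must read ``$\min\{s,\lfloor n/4\rfloor\}=1$ and $4\mid n-1$'' (that is, $s=1$ or $n=5$), and it must take precedence over the first case.

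A shorter route makes this visible at once. Theorem~\ref{main for 4-uniform} is already stated for $\nu(\mathcal H)\le s$, so you can apply it directly with $s'=\min\{s,\lfloor n/4\rfloor\}$, which is legitimate since $\nu\le s'$ and $n\ge 4s'$. Together with $\mathcal H_4(s',n)$ for the lower bound, this gives $s'\lfloor (n-2s')/2\rfloor$, except $(n-1)/2$ when $s'=1$ and $4\mid n-1$. That makes your concavity argument unnecessary.
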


This also naturally yields the exact value of $\mathrm{ex}_4(n,\mathcal{B}(K_3))$.

\begin{cor}\label{r 4 turan K3}
$$
\mathrm{ex}_4(n,\mathcal{B}(K_3))=
\begin{cases}
\left\lfloor\dfrac{n}{4}\right\rfloor\left(\left\lfloor\dfrac{n}{2}\right\rfloor-\left\lfloor\dfrac{n}{4}\right\rfloor\right), & n\ne5,\\[8pt]
2, & n=5.
\end{cases}
$$
\end{cor}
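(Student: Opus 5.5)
The plan is to derive this directly from Theorem~\ref{main for 4-uniform} (equivalently Corollary~\ref{r 4 turan K3 matching}) by choosing the matching bound as large as the vertex count allows. Let $\mathcal H$ be any Berge-$K_3$-free $4$-graph on $n$ vertices and put $s=\lfloor n/4\rfloor$. A matching of $4$-sets uses $4\nu(\mathcal H)$ distinct vertices, so $\nu(\mathcal H)\le s$ automatically. Also $n\ge 4s$. If $n\le 3$ there are no edges and both sides are $0$, so we may assume $s\ge 1$, and Theorem~\ref{main for 4-uniform} applies with this $s$.

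\textbf{Upper bound.} Suppose first that we are not in the exceptional case $s=1$, $4\mid n-1$. Theorem~\ref{main for 4-uniform} gives
$$e(\mathcal H)\le s\left\lfloor\frac{n-2s}{2}\right\rfloor .$$
Since $2s$ is even, $\lfloor (n-2s)/2\rfloor=\lfloor n/2\rfloor-s$, so
$$e(\mathcal H)\le \left\lfloor\frac n4\right\rfloor\left(\left\lfloor\frac n2\right\rfloor-\left\lfloor\frac n4\right\rfloor\right).$$
The exceptional case $s=\lfloor n/4\rfloor=1$ with $n\equiv 1\pmod 4$ forces $n=5$. There Lemma~\ref{one_matching_4_uniform} gives $e(\mathcal H)\le (5-1)/2=2$. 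Note that the general formula would give only $1\cdot(2-1)=1$, which is why $n=5$ must be listed separately.

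\textbf{Lower bound.} For $n\ne 5$ with $n\ge 4$, take $\mathcal H_4(s,n)$ with $s=\lfloor n/4\rfloor$ (for $s=1$, use the $s=1$ construction of Definition~\ref{extremal graph type}). Theorem~\ref{main for 4-uniform} and Lemma~\ref{one_matching_4_uniform} state that this hypergraph is Berge-triangle-free and has exactly $s\lfloor (n-2s)/2\rfloor$ edges, which is the same value as above. For $n=5$, $\mathcal H_4(1,5)$ has two edges $a_1b_1b_2b_3$ and $a_1b_2b_3b_4$, giving $2$.

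\textbf{Main obstacle.} The only delicate point is the bookkeeping of the exceptional case. One must check that the exception in Theorem~\ref{main for 4-uniform} with $s=\lfloor n/4\rfloor$ occurs only at $n=5$. One must also confirm that for $n\ne 5$ with $n\equiv1\pmod4$ and $s\ge2$, the non-exceptional bound is the one that applies. Both follow from the fact that the exception requires $s=1$, i.e.\ $4\le n\le 7$.
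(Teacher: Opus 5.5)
Your proposal is correct and follows the paper's route. You specialize Theorem~\ref{main for 4-uniform} to $s=\lfloor n/4\rfloor$, which bounds $\nu(\mathcal H)$ automatically, rewrite $\lfloor (n-2s)/2\rfloor=\lfloor n/2\rfloor-s$, and observe that the $s=1$, $n\equiv 1 \pmod 4$ exception reduces to $n=5$, with $\mathcal H_4(s,n)$ giving the matching lower bound. Applying the theorem directly with $s=\lfloor n/4\rfloor$ is actually safer than reading the value off the $n\le 4s$ line of Corollary~\ref{r 4 turan K3 matching}, which would wrongly give $1$ instead of $2$ at $n=5$.
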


\bibliographystyle{abbrv}
\bibliography{reference}

@article{Gerbner_and_Palmer_Def_of_BG_first,
author = {Gerbner, D\'{a}niel and Palmer, Cory},
title = {Extremal Results for Berge Hypergraphs},
journal = {SIAM Journal on Discrete Mathematics},
volume = {31},
number = {4},
pages = {2314-2327},
year = {2017},
doi = {10.1137/16M1066191},

URL = {https://doi.org/10.1137/16M1066191},
eprint = {https://doi.org/10.1137/16M1066191}
}

@article{keevash2011hypergraph,
  title={Hypergraph {Tur\'an} problems},
  author={Keevash, Peter},
  journal={Surveys in combinatorics},
  volume={392},
  pages={83--140},
  year={2011},
  publisher={Cambridge University Press Cambridge}
}

@article{T41,
  title={On an extremal problem in graph theory},
  author={Tur{\'a}n, P},
  journal={Mat. Fiz. Lapok},
  volume={48},
  pages={436--452},
  year={1941}
}

@incollection{FurediSimonovits2013,
  author = {F{\"u}redi, Z. and Simonovits, M.},
  title = {The history of degenerate (bipartite) extremal graph problems},
  booktitle = {Erd{\H{o}}s centennial},
  pages = {169--264},
  year = {2013},
  publisher = {Springer Berlin Heidelberg},
  address = {Berlin, Heidelberg}
}

@article {ErdosGallai59,
    AUTHOR = {Erd{\H o}s, P. and Gallai, T.},
     TITLE = {On maximal paths and circuits of graphs},
   JOURNAL = {Acta Math. Acad. Sci. Hungar.},
  FJOURNAL = {Acta Mathematica. Academiae Scientiarum Hungaricae},
    VOLUME = {10},
      YEAR = {1959},
     PAGES = {337--356},
      ISSN = {0001-5954},
   MRCLASS = {05.00},
  MRNUMBER = {114772},
MRREVIEWER = {W. T. Tutte},
       DOI = {10.1007/BF02024498},
       URL = {https://doi.org/10.1007/BF02024498},
}

@article {AlonFrankl24JCTB_K_M,
    AUTHOR = {Alon, Noga and Frankl, P\'eter},
     TITLE = {Tur\'an graphs with bounded matching number},
   JOURNAL = {J. Combin. Theory Ser. B},
  FJOURNAL = {Journal of Combinatorial Theory. Series B},
    VOLUME = {165},
      YEAR = {2024},
     PAGES = {223--229},
      ISSN = {0095-8956,1096-0902},
   MRCLASS = {05C35 (05C70)},
  MRNUMBER = {4678741},
MRREVIEWER = {Narayanan\ Narayanan},
       DOI = {10.1016/j.jctb.2023.12.002},
       URL = {https://doi.org/10.1016/j.jctb.2023.12.002},
}

@article{ErdosSimonovits1966,
  author = {Erd\H{o}s, P. and Simonovits, M.},
  title = {A limit theorem in graph theory},
  journal = {Studia Scientiarum Mathematicarum Hungarica},
  volume = {1},
  year = {1966},
  pages = {51--57}
}

@article{ErdosStone1946,
  author = {Erd\H{o}s, P. and Stone, A. H.},
  title = {On the structure of linear graphs},
  journal = {Bulletin of the American Mathematical Society},
  volume = {52},
  year = {1946},
  pages = {1087--1091}
}

@article{G2024_Turan_bounded_matching,
  title={On {T}ur{\'a}n problems with bounded matching number},
  author={Gerbner, D{\'a}niel},
  journal={J. Graph Theory},
  volume={106},
  number={1},
  pages={23--29},
  year={2024},
  publisher={Wiley Online Library}
}

@article{xue2024_Turan_bounded_matching,
  title={On generalized {T}ur{\'a}n problems with bounded matching number},
  author={Xue, Yisai and Kang, Liying},
  journal={arXiv preprint arXiv:2410.12338},
  year={2024}
}

@article{zhao_and_Lu2024_Turan_bounded_matching,
  title={Generalized {T}ur{\'a}n problems for a matching and long cycles},
  author={Zhao, Xiamiao and Lu, Mei},
  journal={arXiv preprint arXiv:2412.18853},
  year={2024}
}

@article{zhu2025_Turan_bounded_matching,
  title={Extremal problems for a matching and any other graph},
  author={Zhu, Xiutao and Chen, Yaojun},
  journal={J. Graph Theory},
  volume={109},
  number={1},
  pages={19--24},
  year={2025},
  publisher={Wiley Online Library}
}

@article{GTZ2025,
  title={On hypergraph {T}ur{\'a}n problems with bounded matching number},
  author={Gerbner, D{\'a}niel and Tompkins, Casey and Zhou, Junpeng},
  journal={European J. Combin.},
  volume={127},
  pages={104155},
  year={2025},
  publisher={Elsevier}
}

@article{WWY2025,
  title={Hypergraph {T}ur{\'a}n problem of the generalized triangle with bounded matching number},
  author={Wang, Jian and Wang, Wenbin and Yang, Weihua},
  journal={arXiv preprint arXiv:2507.04579},
  year={2025}
}

@article{CLQY25,
  title={Triple systems with bounded matching number: some constructions and exact {T}ur\'{a}n number},
  author={Nannan Chen and Miao Liu and Yuzhen Qi and Caihong Yang},
  journal={arXiv preprint arXiv:2511.17000},
  year={2025}
}

@article{YangZengZhang2025,
  author = {Yang, C. and Zeng, J. and Zhang, X. D.},
  title = {A hypergraph analogue of {Alon--Frankl} Theorem},
  journal = {arXiv preprint arXiv:2511.21096},
  eprint = {2511.21096},
  year = {2025}
}

@article{ergemlidze2020avoiding,
  title={Avoiding long {B}erge cycles: the missing cases $k= r+ 1$ and $k= r+ 2$},
  author={Ergemlidze, Beka and Gy{\H{o}}ri, Ervin and Methuku, Abhishek and Salia, Nika and Tompkins, Casey and Zamora, Oscar},
  journal={Combinatorics, Probability and Computing},
  volume={29},
  number={3},
  pages={423--435},
  year={2020},
  publisher={Cambridge University Press}
}

@article{zhu2020_Berge_K4_turan,
  title={The {T}ur\'an {N}umber of {B}erge-${K}_4$ in 3-{U}niform {H}ypergraphs},
  author={Zhu, Hui and Kang, Liying and Ni, Zhenyu and Shan, Erfang},
  journal={SIAM Journal on Discrete Mathematics},
  volume={34},
  number={3},
  pages={1485--1492},
  year={2020},
  publisher={SIAM}
}

@article{furedi2019_long_Berge_cycle,
  title={Avoiding long Berge cycles},
  author={F{\"u}redi, Zolt{\'a}n and Kostochka, Alexandr and Luo, Ruth},
  journal={Journal of Combinatorial Theory, Series B},
  volume={137},
  pages={55--64},
  year={2019},
  publisher={Elsevier}
}

@article{gyHori2012Berge_cycle_given_length,
  title={Hypergraphs with no cycle of a given length},
  author={Gy{\H{o}}ri, Ervin and Lemons, Nathan},
  journal={Combinatorics, Probability and Computing},
  volume={21},
  number={1-2},
  pages={193--201},
  year={2012},
  publisher={Cambridge University Press}
}

@article{grosz2020uniformity_threshold,
  title={Uniformity thresholds for the asymptotic size of extremal Berge-{F}-free hypergraphs},
  author={Gr{\'o}sz, D{\'a}niel and Methuku, Abhishek and Tompkins, Casey},
  journal={European Journal of Combinatorics},
  volume={88},
  pages={103109},
  year={2020},
  publisher={Elsevier}
}

@article{naor2005_bipartite_turan,
  title={A note on bipartite graphs without 2k-cycles},
  author={Naor, Assaf and Verstra{\"e}te, Jacques},
  journal={Combinatorics, Probability and Computing},
  volume={14},
  number={5-6},
  pages={845--849},
  year={2005},
  publisher={Cambridge University Press}
}

@article{gyHori2006triangle,
  title={Triangle-free hypergraphs},
  author={Gy{\H{o}}ri, Ervin},
  journal={Combinatorics, Probability and Computing},
  volume={15},
  number={1-2},
  pages={185--191},
  year={2006},
  publisher={Cambridge University Press}
}

@article{zhou2025Berge_forest,
  title={On Tur{\'a}n problems for Berge forests},
  author={Zhou, Junpeng and Gerbner, D{\'a}niel and Yuan, Xiying},
  journal={arXiv preprint arXiv:2506.16140},
  year={2025}
}

@article{gyarfas2019Berge_K4,
  title={The {Tur\'an} Number of {Berge $K_4$} in Triple Systems},
  author={Gy{\'a}rf{\'a}s, Andr{\'a}s},
  journal={SIAM Journal on Discrete Mathematics},
  volume={33},
  number={1},
  pages={383--392},
  year={2019},
  publisher={SIAM}
}

@article{gerbner2024Berge_book,
  title={The Tur{\'a}n number of {Berge} book hypergraphs},
  author={Gerbner, D{\'a}niel},
  journal={SIAM Journal on Discrete Mathematics},
  volume={38},
  number={4},
  pages={2896--2912},
  year={2024},
  publisher={SIAM}
}

@article{zhan2025tur,
  title={Tur{\'a}n number of disjoint Berge paths},
  author={Zhan, Yiyan and Zhao, Xiamiao and Lu, Mei},
  journal={arXiv preprint arXiv:2512.23382},
  year={2025}
}

@article{frankl2024triangle,
  title={Triangle-free triple systems},
  author={Frankl, Peter and F{\"u}redi, Zolt{\'a}n and Goorevitch, Ido and Holzman, Ron and Simonyi, G{\'a}bor},
  journal={arXiv preprint arXiv:2405.16452},
  year={2024}
}

@article{bondyS_1974_even_cycles,
  title={Cycles of even length in graphs},
  author={Bondy, John A and Simonovits, Mikl{\'o}s},
  journal={Journal of Combinatorial Theory, Series B},
  volume={16},
  number={2},
  pages={97--105},
  year={1974},
  publisher={Elsevier}
}

@incollection{verstraete2016extremalC2k_survey,
  title={Extremal problems for cycles in graphs},
  author={Verstra{\"e}te, Jacques},
  booktitle={Recent trends in combinatorics},
  pages={83--116},
  year={2016},
  publisher={Springer}
}

@article{gyHori2025linear,
  title={Linear three-uniform hypergraphs with no {Berge} path of given length},
  author={Gy{\H{o}}ri, Ervin and Salia, Nika},
  journal={Journal of Combinatorial Theory, Series B},
  volume={171},
  pages={36--48},
  year={2025},
  publisher={Elsevier}
}
 
\end{document}